\DeclareMathAlphabet{\mathpzc}{OT1}{pzc}{m}{it}
\newcommand{\I}{\mathrm I}
\newcommand{\C}{\mathrm C}
\newcommand{\A}{\mathrm A}
\newcommand{\scaling}{\mathcal D}
\newcommand{\foralls}{\forall\,}
\newcommand{\kappaess}{\kappa_{\mathrm{ess}}}
\newcommand{\degree}{\mathpzc{p}}
\newcommand{\smoothness}{\mathpzc{s}}
\newcommand{\kbox}[1]{ {#1^{(k)}}\raisebox{.8em}{}}
\newcommand{\kkbox}[2]{ {#1^{(#2)}}\raisebox{.8em}{}}
\newcounter{assumption}
\newenvironment{assumption}{
\par \refstepcounter{assumption} {\bf Assumption~\arabic{assumption}.}
}{}
\newenvironment{proofof}[1]{
\par {\bf Proof #1.}
}{\hfill\qed}
\begin{document}

\markboth{Jarle Sogn and Stefan Takacs}{Stable discretizations and IETI-DP solvers for the Stokes system in multi-patch IgA}

\catchline{}{}{}{}{}

\title{Stable discretizations and IETI-DP solvers for the Stokes system
in multi-patch Isogeometric Analysis}

\author{Jarle Sogn\footnote{This work was supported by the Austrian Science Fund (FWF): P31048. The second author was also supported
by the bilateral project WTZ BG 03/2019 (KP-06-Austria/8/2019), funded by  OeAD (Austria) and Bulgarian National Science Fund.
This support is gratefully acknowledged.}}

\address{Department of Mathematics, University of Oslo\\
Postboks 1053, Blindern, Oslo 0316, Norway\\
jarlesog@math.uio.no}

\author{Stefan Takacs${}^*$}

\address{Institute of Numerical Mathematics, Johannes Kepler University Linz, \\Altenberger Str. 69, 
4040 Linz,
Austria\\
stefan.takacs@numa.uni-linz.ac.at}

\maketitle

\begin{history}
\end{history}

\begin{abstract}
     We are interested in a fast solver for the  Stokes equations, discretized with multi-patch Isogeometric Analysis. In the last years, several inf-sup stable discretizations for the Stokes problem have been proposed, often the analysis was restricted to single-patch domains. We focus on one of the simplest approaches, the isogeometric Taylor--Hood element. We show how stability results for single-patch domains can be carried over to multi-patch domains. While this is possible, the stability strongly depends on the shape of the geometry. We construct a Dual-Primal Isogeometric Tearing and Interconnecting (IETI-DP) solver that does not suffer from that effect. We give a convergence analysis and provide numerical tests.
\end{abstract}

\keywords{Stokes equations; Isogeometric Analysis; domain decomposition.}

\ccode{AMS Subject Classification: 76D07, 65D07, 65N55}

\section{Introduction}
\label{sec:1}

Isogeometric Analysis (IgA) was introduced in Ref.~\refcite{hughes2005isogeometric} as a technique for discretizing partial differential equations (PDEs); see also Ref.~\refcite{Cottrell:Hughes:Bazilevs} and references therein. The original idea is to improve the integration of simulation and computer aided design (CAD), compared to the classical finite element (FEM) simulation. This is achieved by representing both the computational domain and the solution of the PDE as linear combination of tensor-product B-splines or non-uniform rational B-splines (NURBS).
Simple computational domains can be parameterized using a single geometry mapping. More complicated domains are usually composed of multiple patches, each parameterized with its own geometry mapping. Such domains are called multi-patch domains. We are interested in fast solvers for the Stokes system, discretized using IgA on multi-patch domains.

For the discretization of the Stokes equations, we need \textit{inf-sup} stable discretizations. Several inf-sup stable elements from the FEM world have been generalized for the IgA framework, like N{\'e}d{\'e}lec, Raviart--Thomas and Taylor--Hood elements, cf. Ref.~\refcite{bressan2013isogeometric,buffa2011isogeometric,evans2013isogeometric}. These methods have in common that the same grid is used both for the velocity and the pressure; an alternative approach based on different grids for velocity and pressure is the subgrid approach, cf. Ref.~\refcite{bressan2013isogeometric}. In this paper, we focus on the generalized Taylor--Hood element. A stability estimate was proven in Ref.~\refcite{bressan2013isogeometric} for tensor-product B-splines and it was later extended to hierarchical splines in Ref.~\refcite{bressan2018inf}. The analysis provides lower bounds for the inf-sup constant that is independent of the grid size. Numerical experiments suggest that the inf-sup constant is also independent of the spline degree; an analysis confirming this, is not known to the authors. In this paper, we use these estimates to show a corresponding stability estimate for multi-patch domains (see Theorem~\ref{theo:inf-supGlobal}).

Usually, the inf-sup constant for the discretized problem depends on the inf-sup constant for the continuous problem, which in turn depends on the domain. Similarly, our inf-sup constant depends on the patch-local inf-sup constants for the chosen discretization and, additionally, on the global inf-sup constant for the continuous problem. The inf-sup constant can be computed explicitly for simple domains: For a rectangle, the inf-sup constant behaves like the length of the larger side, divided by the length of the shorter side, this means that the stability degrades if the domain gets longer and thinner, cf. Ref.~\refcite{costabel2015inf}. We consider a multi-patch computational domain which suffers from this effect. A common strategy for preconditioning the Stokes problem is to use a block diagonal preconditioner with a stiffness matrix for the velocity and a mass matrix for the pressure. The efficiency of this approach depends on the inf-sup constant. Thus, it is inefficient for the domains we consider.

To remedy this, we are interested in a solver whose convergence behavior does not depend on the inf-sup constant for the global problem. We consider FETI-DP methods, which were originally introduced in Ref.~\refcite{farhat2001feti}. We decompose the original problem into patch-local problems, where we know that the inf-sup constant is reasonably large. For multi-patch IgA domains, FETI-DP methods are a natural choice as the patches can serve as substructures. FETI-DP was first adapted to IgA in Ref.~\refcite{kleiss2012ieti} and named the \textit{Dual-Primal Isogeometric Tearing and Interconnecting} (IETI-DP) method.
For second-order elliptic boundary value problems, IETI-DP methods have been extensively explored, see, e.g., Ref.~\refcite{hofer2017dual,hofer2019dual,SchneckenleitnerTakacs:2020,SchneckenleitnerTakacs:2021b}
and, e.g., Ref.~\refcite{BCPS:2013} for the similar BDDC method. In Ref.~\refcite{SchneckenleitnerTakacs:2020}, a convergence analysis is proven, which is, besides grid sizes and the patch diameters, also robust in the spline degree and spline smoothness.

The extension of these results to the Stokes equations poses several challenges. FETI-DP solvers for the Stokes problem have also been considered in the context of finite element (FEM) discretizations, see, e.g., Ref.~\refcite{kimleepark,li2005dual,tuli} for the case of two dimensions and Ref.~\refcite{tu2015feti} for the case of three dimensions. In the context of IgA, a FETI-DP like solver has been applied in a single-patch setting to a generalized Taylor-Hood element in Ref.~\refcite{pavarino2016isogeometric}. The substructures used for the solver are non-overlapping parts of the patch. Isogeometric discretizations distinguish themselves by the smoothness of the functions. The solver from Ref.~\refcite{pavarino2016isogeometric} preserves this smoothness also between the substructures. Moreover, the authors have proposed a solver for the elasticity problem for incompressible and almost incompressible materials. These results have recently been extended in Ref.~\refcite{widlund2021block}. 

We follow the philosophy of IETI-DP solvers, this means that we consider multi-patch domains and use the patches as substructures for the solver. We realize the coupling between the patches based on the minimum smoothness requirements that guarantee a conforming discretization. Since the velocity lives in the Sobolev space $H^1$, we impose continuity across the patches. For the pressure, which is an $L^2$ function, we do not realize any coupling between the patches. As for any IETI-DP method, we have to choose primal degrees of freedom: We use the corner values of each velocity component, the integral of the normal component of the velocity on each of the edges and patchwise averages of the pressure; a similar choice can be found in Ref.~\refcite{li2005dual}. This choice ensures that local system is non-singular. Although the Stokes system is indefinite, we can reduce the system by a Schur complement approach to a symmetric positive definite system formulation. The system is preconditioned with a scaled Dirichlet preconditioner, which is based on solving patch-local vector valued Poisson problems. 

We give a condition number bound for the Schur complement formulation of the IETI-DP solver, preconditioned with the scaled Dirichlet preconditioner
(see Theorem~\ref{thrm:fin}). This analysis uses many results that have been developed in Ref.~\refcite{SchneckenleitnerTakacs:2020} for the Poisson problem. The analysis is explicit with respect to grid sizes, the patch diameters, the spline degree, and the inf-sup constants for the local problems. Numerical experiments for the proposed method are provided, but we also refer to Ref.~\refcite{sogn2021dual}, where alternative choices of the primal degrees of freedom and alternative setups of the scaled Dirichlet preconditioner are considered.

The remainder of this paper is organized as follows. We present the model problem in Section~\ref{sec:2}. In Section~\ref{sec:3}, we introduce an inf-sup stable discretization for multi-patch domains and prove the stability. A IETI-DP solver is proposed in Section~\ref{sec:4}, which is analyzed in the subsequent Section~\ref{sec:5}. We conclude the main part of the paper with Section~\ref{sec:6}, where the results from numerical experiments are presented and analyzed. The Appendix contains some of the proofs.

\section{The model problem}
\label{sec:2}
As model problem, we consider the Stokes equations with
homogeneous Dirichlet boundary conditions in two dimensions. In
detail, the model problem is as follows.
Let $\Omega\subset\mathbb{R}^2$ be an open and bounded domain with Lipschitz boundary $\partial \Omega$.
$L^2(\Omega)$ and $H^s(\Omega)$ denote the standard Lebesgue and
Sobolev spaces on $\Omega$. Moreover, $L^2_0(\Omega)$ 
is the subspace of functions with a mean value of zero,
i.e., $L^2_0(\Omega):=\lbrace q\in L^2(\Omega)\,:\, (q,1)_{L^2(\Omega)} = 0\rbrace$,
and $H^1_0(\Omega)$ is the subspace of $H^1(\Omega)$ of functions 
with vanishing trace. For a given right-hand side $\mathbf{f} \in \left[L^2(\Omega)\right]^2$, find $(\mathbf{u},p)\in \left[H^1_0(\Omega)\right]^2\times L^2_0(\Omega)$ such that
\begin{align}
  \label{eq:stokes}
  \begin{split} 
	(\nabla \mathbf{u}, \nabla \mathbf{v})_{L^2(\Omega)} + (p, \nabla \cdot \mathbf{v})_{L^2(\Omega)} &= (\mathbf{f},\mathbf{v})_{L^2(\Omega)} \quad \foralls \mathbf{v} \in \left[H^1_0(\Omega)\right]^2,\\
	(\nabla \cdot \mathbf{u}, q)_{L^2(\Omega)} \qquad\qquad\qquad \quad\;\;&= 0 \qquad\qquad \quad \foralls q \in L^2_0(\Omega).
  \end{split} 
\end{align}

The existence and uniqueness of a solution to problem~\eqref{eq:stokes} is known for any domain $\Omega$ with Lipschitz boundary; for a proof, see, e.g., Ref.~\refcite{Bramble:2003}, for further information also
Ref.~\refcite{Necas:1967,fortin1991mixed} and references therein. The analysis
is based on Brezzi's theorem Ref.~\refcite{Brezzi:1974}, where one shows that there are constants $0<\alpha\le\gamma$ and $0<\beta\le\delta$ such that one has
\emph{coercivity}
\begin{equation}
  \label{eq:coercivity}
		(\nabla \mathbf u, \nabla \mathbf u)_{L^2(\Omega)}
		\ge \alpha
		\|\mathbf u\|_{H^1(\Omega)}^2 \quad \forall\, \mathbf u \in [H^1_0(\Omega)]^2,
\end{equation}
\emph{inf-sup stability}
\begin{equation}
  \label{eq:infsupcont}
		\sup_{\mathbf{u} \in [H^1_0(\Omega)]^2}
			\frac{ (\nabla \cdot \mathbf{u}, p)_{L^2(\Omega)} }{ \|\mathbf{u}\|_{H^1(\Omega)}  }
			\ge \beta \|p\|_{L^2(\Omega)}\quad \forall\, p \in L^2_0(\Omega)
\end{equation}
and \emph{boundedness}
\begin{equation}
  \label{eq:boundedness}
  	\begin{aligned}
		(\nabla \mathbf u, \nabla \mathbf v)_{L^2(\Omega)}
			&\le \gamma \|\mathbf u\|_{H^1(\Omega)}\|\mathbf v\|_{H^1(\Omega)}
			\quad \forall\, \mathbf u,\mathbf v \in [H^1(\Omega)]^2,\\
		(\nabla \cdot \mathbf{u}, p)_{L^2(\Omega)}
			&\le \delta \|\mathbf{u}\|_{H^1(\Omega)} \|p\|_{L^2(\Omega)}
			\quad \forall\, \mathbf{u} \in [H^1_0(\Omega)]^2,\; p \in L^2_0(\Omega).
	\end{aligned}
\end{equation}
 In~\eqref{eq:infsupcont}, we do not explicitly mention
that $\mathbf{u}\not=0$. Also formulas with suprema that follow are to
be understood in that way.

The only non-trivial condition is the inf-sup
stability~\eqref{eq:infsupcont}. Coercivity~\eqref{eq:coercivity} is a direct consequence of Friedrichs' inequality (cf., e.g., Lemma~1.31 in Ref.~\refcite{Pechstein:2013a}) and boundedness~\eqref{eq:boundedness} (with $\gamma=1$ and $\delta=\sqrt{d}=\sqrt{2}$) follows directly from the Cauchy-Schwarz inequality.

\section{Stable discretizations}
\label{sec:3}

In the following, we introduce a conforming discretization of the Stokes equations which again satisfies the conditions of Brezzi's theorem. Certainly,~\eqref{eq:coercivity} and~\eqref{eq:boundedness} carry directly over to conforming discretizations. The story is different for the inf-sup condition, which has to be verified for the discretized problem as well. For single-patch Isogeometric Analysis, such stable discretizations have been introduced previously. After introducing the representation of the computational domain in Subsection~\ref{subsec:3:1} and the standard concepts of isogeometric functions in Subsection~\ref{subsec:3:2}, we replicate the details of the isogeometric Taylor--Hood element, which we use in our further considerations, in Subsection~\ref{subsec:3:3}. In Subsection~\ref{subsec:3:4}, we discuss the extension of these results to multi-patch Isogeometric Analysis and the dependence of the inf-sup constant on the shape of the computational domain. In Subsection~\ref{subsec:3:5}, we present and discuss numerical results that illustrate the dependence of the stability on the shape of the geometry.

\subsection{Representation of the geometry}
\label{subsec:3:1}

We assume that the computational domain $\Omega\subset \mathbb R^2$ is composed
of $K$ non-overlapping patches $\Omega^{(k)}$, i.e., the domains
$\Omega^{(k)}$ are open and bounded domains with Lipschitz boundary such that 
\begin{align*}
	\overline{\Omega} = \bigcup_{k=1}^K \overline{\Omega^{(k)}} \quad \text{and}\quad
	\Omega^{(k)} \cap \Omega^{(\ell)} = \emptyset \quad\text{for all}\quad k \neq \ell,
\end{align*}
where $\overline{T}$ denotes the closure of the set $T$.
We need that that the patches form an admissible decomposition, i.e.,
that there are no T-junctions.
\begin{assumption}
	\label{ass:conforming}
	For any two patch indices $k\not=\ell$, the set	$\partial{\Omega^{(k)}} \cap \partial{\Omega^{(\ell)}}$	is either a common edge
    $\Gamma^{(k,\ell)}:=\partial{\Omega^{(k)}} \cap \partial{\Omega^{(\ell)}}$,
    a common vertex or empty.
\end{assumption}
This assumption is necessary to allow a fully matching discretization, which is a prerequisite for an $H^1$-conforming discretization.
Recently, a IETI solver for the Poisson equation was proposed that allows a decomposition including T-junctions, cf. Ref.~\refcite{SchneckenleitnerTakacs:2021b}. That approach uses a discontinuous Galerkin method in order to couple the patches. Since we focus on conforming discretizations,
we cannot use an analogous approach.

For any patch index $k$, the set $\mathcal{N}_\Gamma(k)$ contains the indices $\ell$ of patches $\Omega^{(\ell)}$ that share an edge with $\Omega^{(k)}$. The common vertices of two or more
patches -- that are not located on the (Dirichlet) boundary -- are denoted by
$x_1,\ldots,x_J$. For each $j=1,\ldots,J$,
the set $\mathcal{N}_x(j)$ contains the indices of all
patches $\Omega^{(k)}$ such that $x_j\in\partial\Omega^{(k)}$. We assume that the number
of patches sharing one vertex is uniformly bounded.
\begin{assumption}\label{ass:neighbors}
		There is a constant $C_2>0$ such that
		\[
				|\mathcal{N}_x(j)|\le C_2
				\quad\foralls j=1,\ldots,J.
		\]
\end{assumption}
Each patch $\Omega^{(k)}$ is parameterized by a geometry mapping 
\begin{align*}
	\mathbf{G}_k:\widehat{\Omega}:=(0,1)^2 \rightarrow \Omega^{(k)}:=\mathbf{G}_k(\widehat{\Omega}) \subset \mathbb{R}^2, 
\end{align*}
which can be continuously extended to the closure of the parameter domain
$\widehat{\Omega}$. In IgA, the geometry mapping is typically represented using B-splines or NURBS. As usual, the computational methods do not depend on such a representation. We only assume that the geometry mappings are not too much distorted, i.e., that the following assumption holds.
\begin{assumption}
	\label{ass:nabla}
	There is a
	constant $C_3>0$ such that
	\begin{align*}
		\| \nabla \mathbf G_k \|_{L^\infty(\widehat{\Omega})} \le C_3\, H_k
		\quad\text{and}\quad
		\| \nabla \mathbf G_k^{-1} \|_{L^\infty(\widehat{\Omega})} \le C_3\, \frac{1}{H_k},
	\end{align*}
	where $H_k$ is the diameter of the patch $\Omega^{(k)}$,
	holds for all $k=1,\ldots,K$.
\end{assumption}
We need one more assumption in order to analyze the inf-sup stability of the global problem. This is a condition that is specific for the analysis of the Stokes equations. The assumption guarantees that the interfaces are bent by uniformly less than 180 degrees. 
If Assumption~\ref{ass:nabla} holds, each subdivision of $\Omega$ into patches that does not satisfy
Assumption~\ref{ass:normals} can be converted into a subdivision satisfying this condition by (uniformly) subdividing the patches sufficiently often.
\begin{assumption}\label{ass:normals}
We assume that there is a constant $C_4>0$ such that,
on each interface $\Gamma^{(k,\ell)}=\Gamma^{(\ell,k)}$, there is some point
$\overline{x}^{(k,\ell)} = \overline{x}^{(\ell,k)} \in \Gamma^{(k,\ell)}$ with
\[
		 \mathbf n^{(k)}(\overline{x}^{(k,\ell)}) \cdot \mathbf n^{(k)}(x)  \ge C_4
		\quad \forall x\in \Gamma^{(k,\ell)},
\]
where $\mathbf n^{(k)}$ is the outer normal vector on $\Omega^{(k)}$.
\end{assumption}

\subsection{Isogeometric functions}
\label{subsec:3:2}

On the parameter domain $\widehat \Omega=(0,1)^2$, we choose a B-spline space, which depends on a freely chosen vector of breakpoints 
\[
		Z^{(k,\delta)}:= (\zeta_0^{(k,\delta)},\ldots,
				\zeta_{N^{(k,\delta)}}^{(k,\delta)})
				\quad\mbox{with}\quad
				0 = \zeta_0^{(k,\delta)} < \ldots <
				\zeta_{N^{(k,\delta)}}^{(k,\delta)} =1
\]
for each patch $k$ and each spacial direction $\delta\in\{1,2\}$, a 
freely chosen degree parameter $\degree\in\mathbb N:=\{1,2,3,\ldots\}$ and a freely chosen smoothness parameter $\smoothness\in\{0,1,\ldots,\degree-1\}$. 
Based on these vectors of breakpoints, we introduce spline spaces
of degree $\degree$ and smoothness $\smoothness$:
\[
		S^{(k,\delta,\degree,\smoothness)}:=\{ u \in C^{\smoothness}(0,1)
					:  u|_{(\zeta_{i-1}^{(k,\delta)},\zeta_{i}^{(k,\delta)})} \in \mathbb P^\degree
					\mbox{ for all } i=1,\ldots,N^{(k,\delta)}
					\},
\]
where $\mathbb P^\degree$ is the space of polynomials of degree $\degree$.
For each such set, we choose the basis that is obtained by the
Cox-de Boor formula (cf. (2.1) and (2.2) in Ref.~\refcite{Cottrell:Hughes:Bazilevs}); for the application of the Cox-de Boor formula, one uses
a knot vector obtained from the vector of breakpoints by repeating the first and
the last breakpoint $\degree+1$ times and by repeating all other breakpoints
$\degree-\smoothness$ times.

Based on these univariate splines, we introduce the corresponding
tensor-product spline space 
\begin{align*}
		S^{(k,\degree,\smoothness)}
		&:= S^{(k,1,\degree,\smoothness)} \otimes S^{(k,2,\degree,\smoothness)} \\
		&=\left\{u : u(x,y) =\sum_{n=1}^N v_n^{(1)}(x)v_n^{(2)}(y)
							\mbox{ with } v_n^{(\delta)}
							\in S^{(k,\delta,\degree,\smoothness)}
							\mbox{ for } N\in\mathbb N
							\right\}
\end{align*}
as discretization space on the parameter domain $\widehat\Omega$, and
equip it with the standard tensor-product basis.

The function spaces on the physical patches $\Omega^{(k)}$ are defined via
the \emph{pull-back principle}, so we define a space of functions
$\Omega^{(k)} \rightarrow \mathbb R$ via
\[
		V^{(k,\degree,\smoothness)}
		:=
		\{ v :
			v\circ \mathbf G_k \in S^{(k,\degree,\smoothness)}
		\}.
\]
The grid size $\widehat h_k$ on the parameter domain and the grid size $h_k$ on the physical patch are defined by
\[
		\widehat h_k:=\max\{ \zeta_i^{(k,\delta)}-\zeta_{i-1}^{(k,\delta)}
								\,:\, i=1,\ldots,N^{(k,\delta)}  ,\, \delta=1,2 \}
								\quad\mbox{and}\quad
								h_k:=H_k \widehat h_k,
\]
where the definition of the latter is motivated by Assumption~\ref{ass:nabla}.
We assume that the grids are quasi-uniform. 
\begin{assumption}\label{ass:quasiuniform}
	There is a constant $C_5$ such that for $k=1,\ldots,K$
  \[
  C_5 \widehat h_k\leq \widehat h_{k,\min}
  := \min\{ \zeta_i^{(k,\delta)}-\zeta_{i-1}^{(k,\delta)}
								\,:\, i=1,\ldots,N^{(k,\delta)}  ,\, \delta=1,2 \}.
  \]
\end{assumption}
Note that Assumption~\ref{ass:nabla} allows us to relate the norm of the function
on the physical patch and the corresponding function on the parameter domain. There is
a constant $c_G>0$, only depending on the constant from Assumption~\ref{ass:nabla}, 
such that 
\begin{equation}\label{eq:geoequiv}
\begin{aligned}
	c_G^{-1} | v \circ \mathbf G_k |_{H^1(\widehat\Omega)}^2
	&\le | v |_{H^1(\Omega^{(k)})}^2
	\le c_G | v \circ \mathbf G_k |_{H^1(\widehat\Omega)}^2
	&\hspace{1em}\foralls v\in H^1(\Omega^{(k)}),\\
	c_G^{-1} H_k^2 \| v \circ \mathbf G_k \|_{L^2(\widehat\Omega)}^2
	&\le \| v \|_{L^2(\Omega^{(k)})}^2
	\le c_G H_k^2 \| v \circ \mathbf G_k \|_{L^2(\widehat\Omega)}^2
	&\hspace{1em}\foralls v\in L^2(\Omega^{(k)}).
\end{aligned}
\end{equation}
Using a standard Poincaré
inequality (cf., e.g., Lemma~1.27 in Ref.~\refcite{Pechstein:2013a}), we obtain
\begin{equation}\label{eq:poincare}
\begin{aligned}
		\inf_{c\in\mathbb R} \|u-c\|_{L^2(\Omega^{(k)})}
			& \le c_G^{1/2} H_k \inf_{c\in\mathbb R} \|u\circ \mathbf G_k-c\|_{L^2(\widehat\Omega)}
				\le c_G^{1/2} \widehat c_P H_k |u\circ \mathbf G_k|_{H^1(\widehat\Omega)}\\
				&\le c_G \widehat c_P H_k |u|_{H^1(\Omega^{(k)})}
				\qquad \foralls	u\in H^1_0(\Omega)
				,
\end{aligned}
\end{equation}
where $\widehat c_P$ is the Poincaré constant for the parameter
domain $\widehat\Omega=(0,1)^2$. This means that the Poincaré constant for $\Omega^{(k)}$
only depends on $c_G$ and $H_k$. A completely analogous result for
the Friedrichs' inequality is straight forward: For
all patches $\Omega^{(k)}$, where at least one edge is located on the (Dirichlet)
boundary, we have using a standard Friedrichs' inequality (cf., e.g., Lemma~1.31 in Ref.~\refcite{Pechstein:2013a})
\begin{equation}\label{eq:friedrichs}
\begin{aligned}
		\|u\|_{L^2(\Omega^{(k)})}
			\le c_G \widehat c_F H_k |u|_{H^1(\Omega^{(k)})}
				\qquad \foralls
				u\in H^1_0(\Omega)
				,
\end{aligned}
\end{equation}
where $\widehat c_F$ is the Friedrichs' constant for the parameter
domain~$\widehat\Omega$.

\subsection{Stable discretizations for the single-patch case}
\label{subsec:3:3}

As discretization space for the single-patch case, we use the 
isogeometric Taylor--Hood element, as proposed in Ref.~\refcite{bressan2013isogeometric}.
It uses the same grid for all velocity components and for the pressure and
can be defined based on any underlying spline degree parameter $\degree\in \mathbb{N}$ and any underlying smoothness $\smoothness\in\{0,\ldots,\degree-1\}$.

The idea of the isogeometric Taylor--Hood element is to use splines of degree $\degree+1$
and smoothness $\smoothness$, which vanish on the (Dirichlet) boundary, for the velocity and
splines of degree $\degree$ and smoothness $\smoothness$ with vanishing mean value for
the pressure. Our approach is to use these spaces for each of the patches $\Omega^{(k)}$,
however we have to modify the spaces accordingly. So, Dirichlet boundary conditions are not
to be imposed on $\partial\Omega^{(k)}$, but only on $\Gamma_D^{(k)}:=\partial\Omega^{(k)}\cap
\partial\Omega$. Analogously, the condition on the mean value of the pressure only holds
for the whole domain $\Omega$.

So, we define as follows. The function spaces for the parameter domain $\widehat{\Omega}$ are
\begin{align*}
  \mathbf{\widehat{V}}^{(k)} := \left\lbrace \mathbf{v}\in [S^{(k,\degree+1,\smoothness)}]^2\, : \, \mathbf{v}|_{\widehat{\Gamma}^{(k)}_D} = 0  \right\rbrace \quad \text{and}\quad   \widehat{Q}^{(k)} = S^{(k,\degree,\smoothness)},
\end{align*}
where $\mathbf{u}|_{\widehat{\Gamma}^{(k)}_D}$ is the restriction of $\mathbf{u}$ to $\widehat{\Gamma}^{(k)}_D:=\textbf G_k^{-1}(\Gamma^{(k)}_D)$,
the pre-image of the Dirichlet boundary portion $\Gamma^{(k)}_D$. On the physical patch $\Omega^{(k)}$, the spaces are defined through the pull back principle:
\begin{align*}
  \mathbf{V}^{(k)} = \mathbf{\widehat{V}}^{(k)} \circ \mathbf{G}^{-1}_k \quad \text{and} \quad Q^{(k)} = \widehat{Q}^{(k)} \circ \mathbf{G}^{-1}_k.
\end{align*}
As basis for the space $\mathbf{\widehat{V}}^{(k)}$, we choose the basis functions of the standard tensor-product B-spline basis that vanish on $\widehat{\Gamma}^{(k)}_D$. Their images under the geometry function $\mathbf G_k$ form the basis for the space $\mathbf{V}^{(k)}$. The bases for $\widehat Q^{(k)}$ and $Q^{(k)}$ are defined analogously. 

In Ref.~\refcite{bressan2013isogeometric}, it was shown that the isogeometric Taylor--Hood element
is inf-sup stable. Certainly, this only holds if we have boundary conditions on all of
$\partial\Omega^{(k)}$ and the averaging condition locally, i.e., we have
\begin{equation}
  \label{eq:infsuplocal}
		\sup_{\mathbf{u} \in \mathbf{V}^{(k)}\cap [H^1_0(\Omega^{(k)})]^2}
			\frac{ (\nabla \cdot \mathbf{u}, p)_{L^2(\Omega^{(k)})} }{ |\mathbf{u}|_{H^1(\Omega^{(k)})}  }
			\ge \beta_{k}\|p\|_{L^2(\Omega^{(k)})}\quad \forall\, p \in Q^{(k)}\cap L^2_0(\Omega^{(k)}),
  \end{equation}
where the inf-sup constant $\beta_{k}$ is independent of the grid size $h_k$, but it depends
on $\mathbf{G}_{k}$ and the constant from Assumption~\ref{ass:quasiuniform}. Since the discretization is conforming, coercivity~\eqref{eq:coercivity} and boundedness~\eqref{eq:boundedness} are also satisfied for the discretion problem. 
\begin{remark}\label{remark:probust}
  Extensive numerical experiments indicate that the constant $\beta_k$ is independent of the spline degree $\degree$. However, at the time of writing, no such proof is known to the authors.
\end{remark}

\subsection{Stable discretization in the multi-patch case}
\label{subsec:3:4}

In this section, we introduce the global function spaces $\mathbf V \subset [H^1_0(\Omega)]^2$
and $Q \subset L^2_0(\Omega)$. Since we set up a conforming discretization, we need that
the space $\mathbf V$ is continuous. To be able to set up a continuous
global function space, we need that the discretization is fully matching,
i.e., that the following assumption holds.
\begin{assumption}\label{assumption:conforming}
  For every interface $\Gamma^{(k,\ell)}$ between two patches,
  the following statement holds true.
  For any basis function in the basis for $\mathbf V^{(k)}$ having
  support on $\Gamma^{(k,\ell)}$, there is exactly one basis function
  in the basis for $\mathbf V^{(k)}$
  such that they agree on the interface $\Gamma^{(k,\ell)}$.
\end{assumption}
This assumption holds if the spline degree, the vector of breakpoints and the geometry mapping agree on all common interfaces.
For each of the matching basis functions in Assumption~\ref{assumption:conforming}, we set the corresponding coefficients to have the same value.
In this way, we obtain an $H^1$-conforming discretization space. Note, this is not done for the pressure space since it only needs to be $L^2$-conforming. We can now state the overall discretization space. For the velocity, we use
\[
\mathbf{V} = \left\lbrace \mathbf{v}\in [H^1_0 (\Omega)]^2 \,:\, \mathbf{v}|_{\Omega^{(k)}}\in \mathbf{V}^{(k)} \text{ for } k=1,\ldots,K\right\rbrace
\]
and for the pressure, we use 
\[
Q =\left\lbrace q\in L^2_0 (\Omega) \,:\, q|_{\Omega^{(k)}}\in Q^{(k)} \text{ for } k=1,\ldots,K\right\rbrace.
\]
The discretized Stokes problem reads as follows. Find $(\mathbf{u},p)\in \mathbf V \times Q$ such that
\begin{align}
  \label{eq:discstokes}
  \begin{split} 
	(\nabla \mathbf{u}, \nabla \mathbf{v})_{L^2(\Omega)} + (p, \nabla \cdot \mathbf{v})_{L^2(\Omega)} &= (\mathbf{f},\mathbf{v})_{L^2(\Omega)} \quad \foralls \mathbf{v} \in \mathbf V,\\
	(\nabla \cdot \mathbf{u}, q)_{L^2(\Omega)} \qquad\qquad\qquad \quad\;\;&= 0 \qquad\qquad \quad \foralls q \in Q.
  \end{split} 
\end{align}

Now, we prove an inf-sup stability result for the multi-patch
case, which uses the inf-sup stability of the continuous problem, i.e.,~\eqref{eq:infsupcont}, and the inf-sup stability result
for the single-patch case, i.e.,~\eqref{eq:infsuplocal}.

Before we can prove the main inf-sup result, we need some auxiliary results. Note that $Q$
is the direct sum of
\[
  Q_0 := \{ q_0 \in Q \;:\; q_0|_{\Omega^{(k)}}\in Q^{(k)}\cap L^2_0(\Omega^{(k)})\mbox{ for } k=1,\ldots,K \},
\]
the space of function with zero average on each patch, and
\[
	Q_1 := \{q_1\in L_0^2(\Omega) \;:\; q_1|_{\Omega^{(k)}} \mbox{ is constant for } k=1,\ldots,K\},
\]
the space of patchwise constant functions. First, we state the existence of a Fortin operator.
\begin{lemma}
  \label{lamma:FortinOp}
		There exists an operator $\mathbf{\Pi}_F: [H^1_0(\Omega)]^2\rightarrow \mathbf{V}$ such that
		\begin{equation}\label{eq:pi:stabtmp}
			|\mathbf{\Pi}_F \mathbf{u}|_{H^1(\Omega)} \le c_F
			 |\mathbf{u}|_{H^1(\Omega)} 
			 \quad \forall\, \mathbf{u}\in [H^1_0(\Omega)]^2
		\end{equation}
		and
		\begin{equation}\label{eq:pi:divpres0}
			(\nabla \cdot (I-\mathbf{\Pi}_F)\mathbf{u}, p_1)_{L^2(\Omega)}=0	
			\quad \forall\,	(\mathbf{u},p_1)\in [H^1_0(\Omega)]^2 \times Q_1,
		\end{equation}
		where $c_F\ge1$ is a constant that
		only depends on the constants from the
		Assumptions~\ref{ass:neighbors}, \ref{ass:nabla} and \ref{ass:normals}.
\end{lemma}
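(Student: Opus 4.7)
My plan is to construct $\mathbf{\Pi}_F$ in two stages: a globally $H^1$-stable quasi-interpolant into $\mathbf{V}$ that respects the Dirichlet conditions, followed by a low-dimensional correction that restores the interface-flux averages.

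First, I reformulate~\eqref{eq:pi:divpres0} into a geometric condition. For $\mathbf{w} := (I - \mathbf{\Pi}_F)\mathbf{u} \in [H^1_0(\Omega)]^2$, a patchwise application of the divergence theorem yields
\[
(\nabla \cdot \mathbf{w}, p_1)_{L^2(\Omega)}
 = \sum_{k<\ell,\, \ell \in \mathcal{N}_\Gamma(k)} \bigl(p_1|_{\Omega^{(k)}} - p_1|_{\Omega^{(\ell)}}\bigr)\int_{\Gamma^{(k,\ell)}} \mathbf{w}\cdot \mathbf{n}^{(k)}\, ds,
\]
so~\eqref{eq:pi:divpres0} is equivalent to the finite set of scalar conditions
\[
\int_{\Gamma^{(k,\ell)}} \mathbf{\Pi}_F \mathbf{u}\cdot \mathbf{n}^{(k)}\, ds = \int_{\Gamma^{(k,\ell)}} \mathbf{u} \cdot \mathbf{n}^{(k)}\, ds,
\quad \foralls k,\; \ell \in \mathcal{N}_\Gamma(k).
\]

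For the first stage I take $\mathbf{\Pi}_0 : [H^1_0(\Omega)]^2 \to \mathbf{V}$ to be a tensor-product B-spline quasi-interpolant of Scott--Zhang type, which is well defined on the globally continuous space $\mathbf{V}$ thanks to Assumption~\ref{assumption:conforming} and preserves vanishing traces; patchwise $H^1$-stability together with finite overlap of the patch neighborhoods (Assumption~\ref{ass:neighbors}) gives a uniform global $H^1$-stability bound. For the second stage, for every interior interface $\Gamma^{(k,\ell)}$ I introduce an edge bubble of the form $\mathbf{b}^{(k,\ell)} := \mathbf{n}^{(k)}(\overline{x}^{(k,\ell)})\, \phi^{(k,\ell)}$, where $\phi^{(k,\ell)} \in \mathbf{V}$ is a non-negative scalar spline built from basis functions whose support touches $\Gamma^{(k,\ell)}$ but not any other interface nor $\partial \Omega$ (obtained by gluing appropriate basis functions across $\Omega^{(k)}$ and $\Omega^{(\ell)}$, which is possible by Assumption~\ref{assumption:conforming}). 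I then define
\[
\mathbf{\Pi}_F \mathbf{u} := \mathbf{\Pi}_0 \mathbf{u} + \sum_{k<\ell,\,\ell \in \mathcal{N}_\Gamma(k)} \lambda^{(k,\ell)}(\mathbf{u})\, \mathbf{b}^{(k,\ell)},\qquad
\lambda^{(k,\ell)}(\mathbf{u}) := \frac{\int_{\Gamma^{(k,\ell)}} (\mathbf{u} - \mathbf{\Pi}_0 \mathbf{u})\cdot \mathbf{n}^{(k)}\, ds}{\int_{\Gamma^{(k,\ell)}} \mathbf{b}^{(k,\ell)}\cdot \mathbf{n}^{(k)}\, ds},
\]
so that the reduced flux conditions hold by construction.

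The remaining work is the stability estimate~\eqref{eq:pi:stabtmp}. The denominator of $\lambda^{(k,\ell)}(\mathbf{u})$ is bounded from below by $C_4 \int_{\Gamma^{(k,\ell)}} \phi^{(k,\ell)}\, ds$ via Assumption~\ref{ass:normals}, which scales like the edge length; the numerator is controlled by the trace inequality on $\Omega^{(k)}$ together with the patchwise $H^1$-stability of $\mathbf{\Pi}_0$. A matching scaling estimate for $|\mathbf{b}^{(k,\ell)}|_{H^1(\Omega)}$, which uses Assumption~\ref{ass:nabla}, makes the $H_k$-dependent factors cancel, and Assumption~\ref{ass:neighbors} allows summation of the patchwise contributions. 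I expect the main obstacle to be precisely this scaling bookkeeping: the patch diameter $H_k$, the edge length $|\Gamma^{(k,\ell)}|$ and the factors from the geometry transformation must balance out so that the final constant $c_F$ depends only on $C_2$, $C_3$ and $C_4$, as asserted.
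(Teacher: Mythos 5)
Your overall architecture matches the paper's: a globally stable quasi-interpolant followed by a low-dimensional correction built from normal-direction bubbles $\mathbf n^{(k)}(\overline x^{(k,\ell)})\,\phi^{(k,\ell)}$ whose flux denominators are controlled from below via Assumption~\ref{ass:normals}, and your reduction of~\eqref{eq:pi:divpres0} to preservation of the interface fluxes $\int_{\Gamma^{(k,\ell)}}\mathbf u\cdot\mathbf n^{(k)}\,\mathrm ds$ is exactly the paper's first step. However, there is a genuine gap in your choice of bubble. You build $\phi^{(k,\ell)}$ from fine-grid basis functions whose supports touch $\Gamma^{(k,\ell)}$ but no other interface, i.e., $\phi^{(k,\ell)}$ lives in a strip of width $O(\degree\, h_k)$ along the interface. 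Such a function carries a flux $\int_{\Gamma^{(k,\ell)}}\phi^{(k,\ell)}\,\mathrm ds\lesssim H_k$ while its energy scales like $|\phi^{(k,\ell)}|_{H^1(\Omega^{(k)})}^2\sim H_k/(\degree\,h_k)$, since it must drop from order one to zero across the strip. Consequently the corrected term $\lambda^{(k,\ell)}\,\mathbf b^{(k,\ell)}$ contributes a factor of order $\sqrt{H_k/(\degree\, h_k)}$ to~\eqref{eq:pi:stabtmp}, so your $c_F$ would depend on $H_k/h_k$ and on $\degree$, contradicting the assertion that it depends only on the constants from Assumptions~\ref{ass:neighbors}, \ref{ass:nabla} and~\ref{ass:normals}. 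The cure, which is what the paper does in Lemma~\ref{lem:localCorrectionOperator}, is to use a \emph{patch-global} bubble, namely $\widehat\varphi(\xi_1,\xi_2)=\xi_1\xi_2(1-\xi_2)$ pulled back to $\Omega^{(k)}$: it only needs to have vanishing \emph{trace} on the other three sides (not vanishing support near them), and after scaling both its $H^1$-seminorm and its interface flux are absolute constants, giving the ratio $H_k^{-1}$ you need.

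A second, smaller imprecision: to bound the numerator $\int_{\Gamma^{(k,\ell)}}(\mathbf u-\mathbf\Pi_0\mathbf u)\cdot\mathbf n^{(k)}\,\mathrm ds$ by $H_k\,|\mathbf u|_{H^1}$ through the scaled trace inequality, $H^1$-stability of $\mathbf\Pi_0$ is not enough; you need the zeroth-order approximation property $\|(I-\mathbf\Pi_0)\mathbf u\|_{L^2(\Omega^{(k)})}\lesssim H_k\,|\mathbf u|_{H^1}$ with a constant independent of $h_k$, $\degree$ and even of Assumption~\ref{ass:quasiuniform}. This is why the paper does not interpolate into the full spline space $\mathbf V$ but into the coarse space of patchwise bilinear functions with vertex values as degrees of freedom, and proves a Poincar\'e inequality on the vertex patches $\mathcal S^{(j)}$ (Lemma~\ref{lem:poincare}, which is where Assumption~\ref{ass:neighbors} enters) to obtain~\eqref{eq:pi1:stab} and~\eqref{eq:pi1:approx3}. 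If you insist on a fine-space quasi-interpolant you must justify that its stability and approximation constants are independent of the mesh and the degree, which is precisely the dependence the lemma forbids.
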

The proof of this lemma is given in the Appendix.
We now show an inf-sup estimate for the pressure space of patchwise constants.
\begin{lemma}
  \label{lemma:infsupconst}
We have the inf-sup estimate
  \begin{equation}
  \label{eq:infsupglobalconst}
		\sup_{\mathbf{u} \in \mathbf{V}}
			\frac{ (\nabla \cdot \mathbf{u}, p_1)_{L^2(\Omega)} }{ |\mathbf{u}|_{H^1(\Omega)}  }
			\ge \frac{\beta}{c_F}\|p_1\|_{L^2(\Omega)}\quad \forall\, p_1 \in Q_1,
  \end{equation}
  where $\beta$ is as in~\eqref{eq:infsupcont} and
  $c_F$ is as in Lemma~\ref{lamma:FortinOp}.
\end{lemma}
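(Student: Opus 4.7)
The plan is to apply the standard Fortin argument, combining the continuous inf-sup estimate \eqref{eq:infsupcont} with the properties of the operator $\mathbf{\Pi}_F$ from Lemma~\ref{lamma:FortinOp}. Since $Q_1\subset L^2_0(\Omega)$, any $p_1\in Q_1$ is an admissible test pressure for the continuous inf-sup condition, so we may use it as a bridge between the two estimates.

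Concretely, I would fix $p_1\in Q_1$ and start from
\[
\sup_{\mathbf{u}\in\mathbf{V}} \frac{(\nabla\cdot \mathbf{u}, p_1)_{L^2(\Omega)}}{|\mathbf{u}|_{H^1(\Omega)}}
\ge \sup_{\mathbf{w}\in[H^1_0(\Omega)]^2} \frac{(\nabla\cdot \mathbf{\Pi}_F\mathbf{w}, p_1)_{L^2(\Omega)}}{|\mathbf{\Pi}_F\mathbf{w}|_{H^1(\Omega)}},
\]
which is justified because $\mathbf{\Pi}_F$ maps into $\mathbf{V}$. The divergence-preservation property \eqref{eq:pi:divpres0} of the Fortin operator, applied to the pair $(\mathbf{w},p_1)\in [H^1_0(\Omega)]^2\times Q_1$, lets me replace the numerator by $(\nabla\cdot\mathbf{w},p_1)_{L^2(\Omega)}$; the stability bound \eqref{eq:pi:stabtmp} lets me replace the denominator by $c_F |\mathbf{w}|_{H^1(\Omega)}$ (at the cost of one factor of $c_F$). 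Together this yields
\[
\sup_{\mathbf{u}\in\mathbf{V}} \frac{(\nabla\cdot \mathbf{u}, p_1)_{L^2(\Omega)}}{|\mathbf{u}|_{H^1(\Omega)}}
\ge \frac{1}{c_F}\sup_{\mathbf{w}\in[H^1_0(\Omega)]^2} \frac{(\nabla\cdot \mathbf{w}, p_1)_{L^2(\Omega)}}{|\mathbf{w}|_{H^1(\Omega)}}.
\]

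Finally I would apply the continuous inf-sup condition \eqref{eq:infsupcont} to the right-hand side. Since $|\mathbf{w}|_{H^1(\Omega)}\le \|\mathbf{w}\|_{H^1(\Omega)}$, the supremum over $|\mathbf{w}|_{H^1}$ dominates the supremum over the full norm $\|\mathbf{w}\|_{H^1}$ used in \eqref{eq:infsupcont}, so the right-hand side is bounded below by $(\beta/c_F)\|p_1\|_{L^2(\Omega)}$. This is the claimed estimate. There is no real obstacle here once Lemma~\ref{lamma:FortinOp} is available; the whole argument is a direct textbook application of Fortin's trick, and the only small care needed is the switch from $\|\cdot\|_{H^1}$ to $|\cdot|_{H^1}$, which goes in the favorable direction.
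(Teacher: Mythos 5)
Your proposal is correct and follows essentially the same route as the paper: both apply the continuous inf-sup condition \eqref{eq:infsupcont} to $p_1\in Q_1\subset L^2_0(\Omega)$ and then push the (near-)maximizing velocity through the Fortin operator $\mathbf{\Pi}_F$, using \eqref{eq:pi:divpres0} to preserve the numerator and \eqref{eq:pi:stabtmp} to control the denominator by $c_F$. The only cosmetic difference is that the paper fixes a single $\mathbf{v}$ realizing the continuous inf-sup bound rather than carrying the supremum along, and your remark about the seminorm versus the full $H^1$-norm going in the favorable direction is the same observation the paper makes implicitly.
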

\begin{proof}
  Let $p_1 \in Q_1$ be arbitrary but fixed.
  From the continuous inf-sup condition~\eqref{eq:infsupcont}, it follows that 
  there exists a $\mathbf{v}\in\mathbf [H^1_0(\Omega)]^2$ such that
  \begin{equation}\label{eq:lemma:infsupconst}
  \frac{ (\nabla \cdot \mathbf{v}, p_1)_{L^2(\Omega)} }{ |\mathbf{v}|_{H^1(\Omega)} } \geq \beta\| p_1\|_{L^2(\Omega)}.
  \end{equation}
  By setting $\mathbf{u} := \mathbf{\Pi}_F \mathbf{v}$ and
  using Lemma~\ref{lamma:FortinOp} and~\eqref{eq:lemma:infsupconst}, we get
  \begin{align*}
  \sup_{\mathbf{u} \in \mathbf{V}}\frac{ (\nabla \cdot \mathbf{u}, p_1)_{L^2(\Omega)} }{ |\mathbf{u}|_{H^1(\Omega)}  }
  &\ge  \frac{ (\nabla \cdot \mathbf{\Pi}_F \mathbf{v}, p_1)_{L^2(\Omega)} }{ |\mathbf{\Pi}_F \mathbf{v}|_{H^1(\Omega)}}
  \ge \frac{1}{c_F} \frac{ (\nabla \cdot \mathbf{v}, p_1)_{L^2(\Omega)} }{ |\mathbf{v}|_{H^1(\Omega)}}\geq \frac{\beta}{c_F} \|p_1\|_{L^2(\Omega)},
  \end{align*}
  which finishes the proof.
\end{proof}

Using the inf-sup result above and the patchwise inf-sup result \eqref{eq:infsuplocal}, we can show a global discrete inf-sup result.
\begin{theorem}\label{theo:inf-supGlobal}
  Let $\mathbf{V}\times Q$ be the generalized Taylor--Hood space as defined in this Section. We have the inf-sup result
  \begin{equation}
      \label{eq:infsupGlobal}
	  \sup_{\mathbf{u} \in \mathbf{V}} \frac{(\nabla \cdot \mathbf{u}, p)_{L^2(\Omega)}}{|\mathbf{u}|_{H^1(\Omega)}}
	  \ge 
	  \underbrace{\frac{\beta\;\min_k\beta_k}{3c_F\delta}}_{\displaystyle \beta_h:=}
	  \|p\|_{L^2(\Omega)}\quad \forall\, p \in Q,
  \end{equation}
  where $\beta$ is as in~\eqref{eq:infsupcont}, $\beta_k$ as in~\eqref{eq:infsuplocal}, $\delta$ as in~\eqref{eq:boundedness} and
  $c_F$ as in Lemma~\ref{lamma:FortinOp}.
\end{theorem}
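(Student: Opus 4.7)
My plan is a two-level Boland--Nicolaides decomposition, matching the splitting $Q = Q_0 \oplus Q_1$ already isolated in the excerpt. For $p \in Q$, I would write $p = p_0 + p_1$ with $p_0 \in Q_0$ and $p_1 \in Q_1$. These two components are $L^2$-orthogonal, because $p_1$ is patchwise constant while $p_0$ has zero mean on each patch, so $\|p\|_{L^2(\Omega)}^2 = \|p_0\|_{L^2(\Omega)}^2 + \|p_1\|_{L^2(\Omega)}^2$ and in particular $\|p_0\|_{L^2(\Omega)} + \|p_1\|_{L^2(\Omega)} \ge \|p\|_{L^2(\Omega)}$. The goal is then to produce a single test velocity $\mathbf{u} = s \mathbf{u}_0 + \mathbf{u}_1 \in \mathbf{V}$, combining a patchwise ``zero-mean'' correction $\mathbf{u}_0$ with a global ``constant'' correction $\mathbf{u}_1$, and to tune $s > 0$ so that both pressure components are captured simultaneously.

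For $\mathbf{u}_0$, I would apply the patchwise inf-sup~\eqref{eq:infsuplocal} to each $p_0|_{\Omega^{(k)}} \in Q^{(k)} \cap L^2_0(\Omega^{(k)})$ and rescale patchwise to obtain $\mathbf{u}_0^{(k)} \in \mathbf{V}^{(k)} \cap [H^1_0(\Omega^{(k)})]^2$ with $|\mathbf{u}_0^{(k)}|_{H^1(\Omega^{(k)})} = \|p_0|_{\Omega^{(k)}}\|_{L^2(\Omega^{(k)})}$ and $(\nabla \cdot \mathbf{u}_0^{(k)}, p_0|_{\Omega^{(k)}})_{L^2(\Omega^{(k)})} \ge \beta_k \|p_0|_{\Omega^{(k)}}\|_{L^2(\Omega^{(k)})}^2$. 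Since each $\mathbf{u}_0^{(k)}$ vanishes on $\partial \Omega^{(k)}$, extension by zero glues them into $\mathbf{u}_0 \in \mathbf{V}$; a global rescaling then gives $|\mathbf{u}_0|_{H^1(\Omega)} = 1$ and $(\nabla \cdot \mathbf{u}_0, p_0)_{L^2(\Omega)} \ge (\min_k \beta_k) \|p_0\|_{L^2(\Omega)}$. The crucial orthogonality $(\nabla \cdot \mathbf{u}_0, p_1)_{L^2(\Omega)} = 0$ follows from the divergence theorem on each patch together with the patchwise constancy of $p_1$. For $\mathbf{u}_1$, I would invoke Lemma~\ref{lemma:infsupconst} directly and rescale to get $|\mathbf{u}_1|_{H^1(\Omega)} = 1$ and $(\nabla \cdot \mathbf{u}_1, p_1)_{L^2(\Omega)} \ge (\beta / c_F) \|p_1\|_{L^2(\Omega)}$.

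Combining the two, and using boundedness~\eqref{eq:boundedness} to control the cross term $|(\nabla \cdot \mathbf{u}_1, p_0)_{L^2(\Omega)}| \le \delta |\mathbf{u}_1|_{H^1(\Omega)} \|p_0\|_{L^2(\Omega)} = \delta \|p_0\|_{L^2(\Omega)}$, I would arrive at
\[
(\nabla \cdot \mathbf{u}, p)_{L^2(\Omega)}
\ge \bigl(s \min_k \beta_k - \delta\bigr) \|p_0\|_{L^2(\Omega)} + \tfrac{\beta}{c_F} \|p_1\|_{L^2(\Omega)}.
\]
The choice $s = (\delta + \beta/c_F)/\min_k \beta_k$ aligns both coefficients at $\beta/c_F$, so the right-hand side is at least $\tfrac{\beta}{c_F}\bigl(\|p_0\|_{L^2(\Omega)} + \|p_1\|_{L^2(\Omega)}\bigr) \ge \tfrac{\beta}{c_F} \|p\|_{L^2(\Omega)}$. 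Since every inf-sup constant is bounded above by the boundedness constant $\delta$ through~\eqref{eq:boundedness} (and $c_F \ge 1$), one has $\beta/c_F \le \delta$ and $\min_k \beta_k \le \delta$; hence $|\mathbf{u}|_{H^1(\Omega)} \le s + 1 = (\delta + \beta/c_F + \min_k \beta_k)/\min_k \beta_k \le 3\delta / \min_k \beta_k$, and the ratio yields the advertised constant $\beta_h = \beta \min_k \beta_k / (3 c_F \delta)$. I expect the only real bookkeeping to lie in the construction of $\mathbf{u}_0$: the vanishing traces on $\partial \Omega^{(k)}$ make both the gluing into $\mathbf{V}$ and the orthogonality with $Q_1$ automatic, and the factor $3$ in the denominator emerges precisely from the three-term bound $\delta + \beta/c_F + \min_k \beta_k \le 3 \delta$.
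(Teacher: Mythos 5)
Your proposal is correct and follows essentially the same route as the paper: the same splitting $p=p_0+p_1$ with $Q=Q_0\oplus Q_1$, the patchwise inf-sup~\eqref{eq:infsuplocal} with zero extension for $p_0$, Lemma~\ref{lemma:infsupconst} for $p_1$, the orthogonality $(\nabla\cdot\mathbf{u}_0,p_1)_{L^2(\Omega)}=0$ via patchwise integration by parts, and boundedness for the cross term, arriving at the identical constant. The only (cosmetic) difference is the final combination step: you build the explicit composite test function $s\,\mathbf{u}_0+\mathbf{u}_1$ in classical Boland--Nicolaides style, whereas the paper takes a weighted linear combination of the two supremum inequalities; both yield $\beta_h=\beta\min_k\beta_k/(3c_F\delta)$ after using $c_F\ge 1$ and $\beta,\min_k\beta_k\le\delta$.
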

Before we prove this theorem, we give some remarks.
\begin{remark}\label{remark:2}
$\beta_h$ is independent of the grid sizes $h_k$ since the local
inf-sup constants $\beta_k$ and the Fortin constant $c_F$ are independent of $h_k$ and the inf-sup constant $\beta$ for the continuous problem is inherently independent of the discretization.
Robustness in the spline degree is obtained if the local inf-sup
constants $\beta_k$ are robust in the spline degree, which is an
unproven conjecture for the isogeometric Taylor--Hood element, see Remark~\ref{remark:probust}.
\end{remark}
\begin{remark}\label{remark:3}
We observe that the local inf-sup constants $\beta_k$ and the Fortin constant $c_F$ are independent of the shape of the overall
domain $\Omega$ (both only depend on the parameterization and thus
also of the shape of the individual patches and the
maximum number of patches meeting in one vertex), so any shape-dependence
observed in numerical results is due to the shape-dependence of 
$\beta$, the inf-sup constant of the continuous problem 
and thus inherent to the Stokes equations themselves.
\end{remark}
\begin{remark}\label{remark:4}
Finally, we observe that Theorem~\ref{theo:inf-supGlobal} is not restricted to the isogeometric
Taylor--Hood element. So, the proofs can be applied to any conforming, locally inf-sup stable pair of
discretization space, where the velocity space is fully matching at the interfaces and where all the bi-quadratic functions are in the underlying
space $\widehat{\mathbf V}^{(k)}$.
\end{remark}
\begin{proofof}{of Theorem~\ref{theo:inf-supGlobal}}
  Let $p\in Q$ be arbitrary but fixed and let $p=p_0+p_1$ with $p_0\in Q_0$ and $p_1\in Q_1$.
  From~\eqref{eq:infsuplocal}, we know that there are non-zero functions
  $\mathbf{u}^{(k)} \in \mathbf{V}^{(k)}\cap [H^1_0(\Omega^{(k)})]^2$ such that
  \[
  (\nabla \cdot \mathbf{u}^{(k)}, p_0)_{L^2(\Omega^{(k)})} \ge \widehat \beta |\mathbf{u}^{(k)}|_{H^1(\Omega^{(k)})} \|p_0\|_{L^2(\Omega^{(k)})},
  \quad \text{where} \quad \widehat \beta := \min_k \beta_k.
  \] 		
  The suprema in the inf-sup conditions are scaling invariant, so we can restrict ourselves to the choice
  $|\mathbf{u}^{(k)}|_{H^1(\Omega^{(k)})} = \|p_0\|_{L^2(\Omega^{(k)})}$.
  We define
  $\textbf u\in \mathbf{V}$ such that
  \[
  \mathbf{u}|_{\Omega^{(k)}}:= \mathbf{u}^{(k)}.
  \]
  Note that $\mathbf{u}$ vanishes on the interfaces between the patches.
  Summing up, we obtain
  \begin{equation}\label{eq:theo:inf-supGlobal}
  \begin{aligned}
	&(\nabla \cdot \mathbf{u}, p_0)_{L^2(\Omega)}    
	= \sum_{k=1}^K (\nabla \cdot \mathbf{u}^{(k)}, p_0)_{L^2(\Omega^{(k)})} 
	\ge \widehat \beta
	\sum_{k=1}^K |\mathbf{u}^{(k)}|_{H^1(\Omega^{(k)})}
	\|p_0\|_{L^2(\Omega^{(k)})} \\
	&\quad= \widehat \beta
	\sum_{k=1}^K \|p_0\|_{L^2(\Omega^{(k)})}^2 
	= \widehat \beta \left(\sum_{k=1}^K |\mathbf{u}^{(k)}|_{H^1(\Omega^{(k)})}^2\right)^{1/2}
	\left(\sum_{k=1}^K \|p_0\|_{L^2(\Omega^{(k)})}^2\right)^{1/2} \\
	&\quad= \widehat \beta
	|\mathbf{u}|_{H_1(\Omega)}
	\|p_0\|_{L^2(\Omega)}.
  \end{aligned}
  \end{equation}
  By applying integration-by-parts patchwise and using $\nabla p_1=0$
  and $\mathbf u|_{\partial\Omega^{(k)}}=0$, we obtain that
  \[
  (\nabla \cdot \mathbf{u}, p_1)_{L^2(\Omega)}	= -\sum^K_{k=1} (\mathbf{u}, \nabla p_1)_{L^2(\Omega^{(k)})}
  + \sum^K_{k=1} (\mathbf{u}\cdot \mathbf{n}, p_1)_{L^2(\partial \Omega^{(k)})}=0.
  \]
  By combining this with~\eqref{eq:theo:inf-supGlobal}, we obtain
  \begin{equation}\label{proof:1}
	\sup_{\mathbf{u} \in \mathbf{V}} \frac{ (\nabla \cdot \mathbf{u},  p)_{L^2(\Omega)} }{ |\mathbf{u}|_{H^1(\Omega)}} \ge \widehat \beta \|p_0\|_{L^2(\Omega)}.
  \end{equation}
  Lemma \ref{lemma:infsupconst} gives together with boundedness~\eqref{eq:boundedness}
  \begin{align*}\nonumber
	&\sup_{\mathbf{u} \in \mathbf{V}} \frac{ (\nabla \cdot \mathbf{u}, p)_{L^2(\Omega)}}{ |\mathbf{u}|_{H^1(\Omega)}}
	=\sup_{\mathbf{u} \in \mathbf{V}} \frac{ (\nabla \cdot \mathbf{u}, p_1)_{L^2(\Omega)} + (\nabla \cdot \mathbf{u}, p_0)_{L^2(\Omega)}}{ |\mathbf{u}|_{H^1(\Omega)}  }\\
	&\quad\ge \sup_{\mathbf{u} \in \mathbf{V}} \frac{ (\nabla \cdot \mathbf{u}, p_1)_{L^2(\Omega)} - \delta |\mathbf{u}|_{H^1(\Omega)} \| p_0\|_{L^2(\Omega)}}{ |\mathbf{u}|_{H^1(\Omega)}  }
	\ge \frac{\beta}{c_F}  \|p_1\|_{L^2(\Omega)} - \delta \|p_0\|_{L^2(\Omega)}.
  \end{align*}
  Using the triangle inequality, we have further
	\begin{equation}\label{proof:2}
	\sup_{\mathbf{u} \in \mathbf{V}} \frac{ (\nabla \cdot \mathbf{u}, p)_{L^2(\Omega)}}{ |\mathbf{u}|_{H^1(\Omega)}}
	\ge \frac{\beta}{c_F}  \|p\|_{L^2(\Omega)}
	- \left(\delta+\frac{\beta}{c_F}\right)
	\|p_0\|_{L^2(\Omega)}.
  \end{equation}
  By adding $(\delta+c_F^{-1}\beta)$ times
  inequality~\eqref{proof:1} and $\widehat \beta$ times
  inequality~\eqref{proof:2}, we finally get
  \[
  \sup_{\mathbf{u} \in \mathbf{V}} \frac{(\nabla \cdot \mathbf{u}, p)_{L^2(\Omega)}}{|\mathbf{u}|_{H^1(\Omega)}}
  \ge 
  \frac{\beta\widehat{\beta}}{c_F(\delta+c_F^{-1}\beta+\widehat{\beta})}\|p\|_{L^2(\Omega)}
  \ge
   \frac{\beta\widehat{\beta}}{3c_F\delta}\|p\|_{L^2(\Omega)},
  \]
  where we make use of $c_F\ge1$ and $\beta,\widehat{\beta}\le\delta$.
\end{proofof}

\subsection{Numerical exploration of the inf-sup constant}
\label{subsec:3:5}

\begin{figure}[htb]
	\centering
	\includegraphics[height=16em]{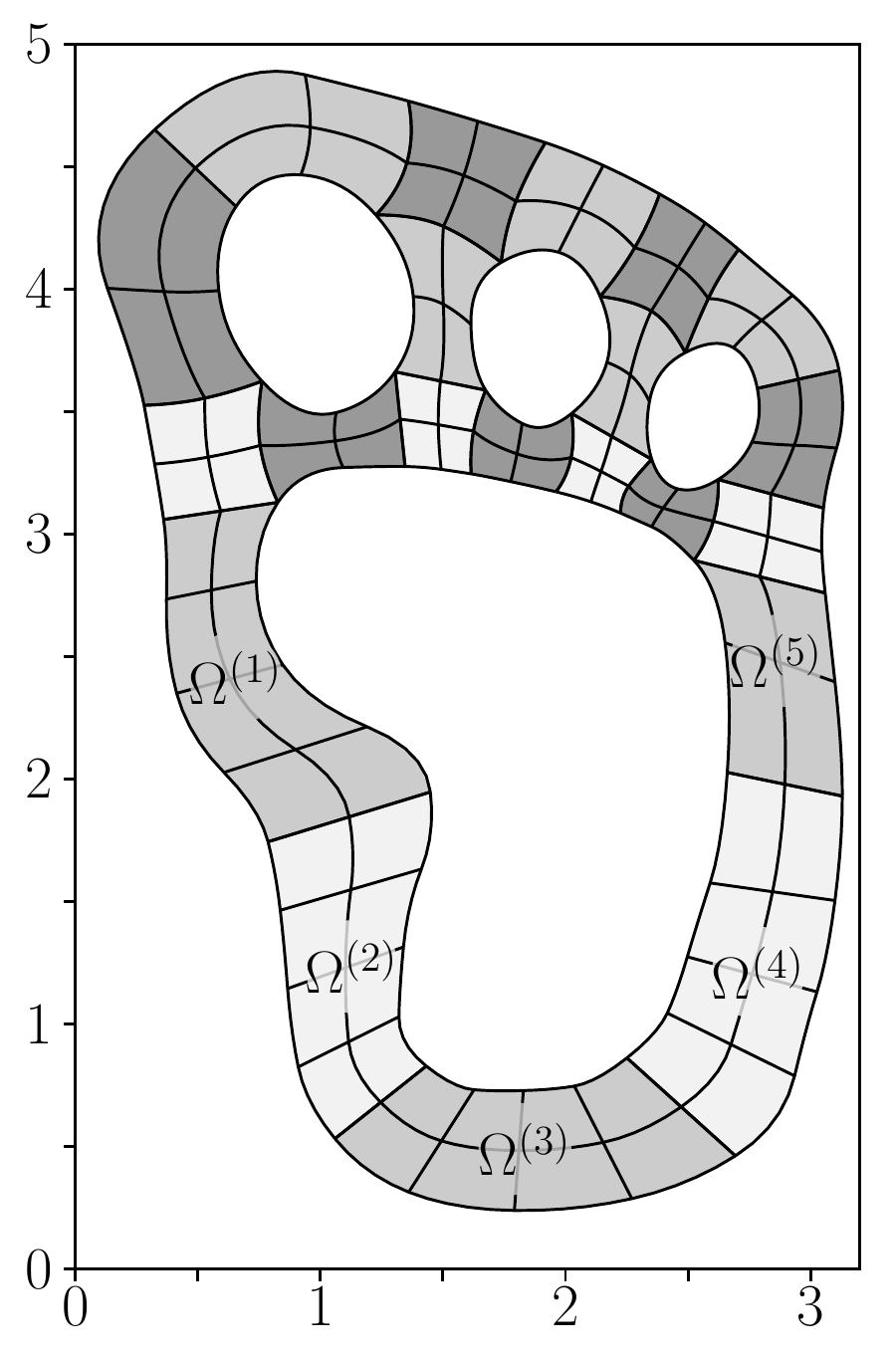}
	\caption{Multi-patch domain of a Yeti-footprint}
	\label{fig:yetiinfsup}
\end{figure}

In this subsection, we present numerical experiments that illustrate the dependence of the discrete inf-sup constant $\beta_h$ on the grid size, the spline degree and the shape of the geometry. This is done by deriving the condition number $\kappa$ of the (negative) Schur complement, preconditioned with the inverse of the mass matrix for the pressure. The relation between this condition number and the inf-sup constant is
\[
	\kappa = \frac{\sqrt{\delta_h}}{\sqrt{\beta_h}} \approx \frac{1}{\sqrt{\beta_h}},
\]
where $\delta_h\approx 1$ is the discrete version of the boundedness constant $\delta$ from \eqref{eq:boundedness}. As computational domain, we consider the Yeti-footprint, consisting of 21 patches, and domains obtained by combining a few of the patches. The Yeti-footprint is depicted in Figure~\ref{fig:yetiinfsup}; the patches are represented by different colors. The lines within each patch represent the coarsest ($\ell = 0$) grid on each patch. We obtain finer grids by performing $\ell$ uniform refinement steps ($\ell=0,1,2,3$) and test for various choices of the spline degree parameters $\degree$ ($\degree=1,2,\ldots,5$). The computed condition numbers are displayed in Table~\ref{tab:infsupcondition}. The left table shows the results for the single-patch domain $\Omega^{(1)}$ and the right table shows the results for the full Yeti-footprint $\overline{\Omega^{(1)}} \cup \cdots\cup \overline{\Omega^{(21)}}$. Due to the size the full Yeti-footprint, we could not calculate the condition numbers for $\ell = 3$, so these are left out.
As we see from the tables, the constants depend neither on the grid size (this is predicted by the theory), nor on the spline degree (cf. Remark~\ref{remark:probust}). We notice that the condition numbers are significantly larger for the full domain, which is a property of the geometry (cf. Remark~\ref{remark:3}).

\begin{table}[btp]
  \begin{center} 
  \begin{tabular}{|c|c|c|c|c|c|}
    \hline
    $\ell\setminus \degree$ & {$1$} & {$2$} & {$3$} & {$4$}& {$5$}  \\
    \hline
    \hline
    $0$  & $\!17.1\!$ & $\!17.5\!$ & $\!17.6\!$ & $\!17.6\!$ & $\!17.7\!$ \\ \hline
	$1$  & $\!17.6\!$ & $\!17.6\!$ & $\!17.7\!$ & $\!17.7\!$ & $\!17.7\!$ \\ \hline
	$2$  & $\!17.7\!$ & $\!17.7\!$ & $\!17.7\!$ & $\!17.7\!$ & $\!17.7\!$ \\ \hline
	$3$  & $\!17.7\!$ & $\!17.7\!$ & $\!17.7\!$ & $\!17.7\!$ & $\!17.7\!$ \\
    \hline
  \end{tabular}
 $\;\;$
    \begin{tabular}{|c|c|c|c|c|c|}
    \hline
    $\ell\setminus \degree$ & {$1$} & {$2$} & {$3$} & {$4$} & {$5$} \\
    \hline
    \hline
    $0$  & $\!243\!$ & $\!244\!$ & $\!243\!$ & $\!243\!$ & $\!243\!$ \\ \hline
	$1$  & $\!244\!$ & $\!244\!$ & $\!243\!$ & $\!243\!$ & $\!243\!$ \\ \hline
	$2$  & $\!244\!$ & $\!244\!$ & $\!243\!$ & $\!243\!$ & $\!243\!$ \\ \hline
    $3$  & $-$ & $-$ & $-$ & $-$ & $-$ \\ 
    \hline
  \end{tabular}
    \caption{Condition numbers for $\Omega^{(1)}$ (left) and the whole Yeti-footprint (right)}
    \label{tab:infsupcondition}
  \end{center} 
\end{table}

To explore the dependence on the shape of the domain further, we calculate condition numbers for partial Yeti-footprints $\overline{\Omega^{(1)}} \cup \cdots\cup \overline{\Omega^{(K)}}$ with
$K=1,2,3,4,5,21$. The corresponding condition numbers are computed for $\ell =2$ and $\degree = 2$
and presented in Table~\ref{tab:infsuppartialyeti}. We observe that the condition numbers increase steadily from $K=1$ to $K=5$, which is expected as the domains are similar to elongating rectangles. Note that the condition number does not increase too significantly from $K=5$ to $K=21$.
\begin{table}[btp]
  \begin{center} 
  \begin{tabular}{|c|c|c|c|c|c|c|}
    \hline
    Number of patches $K$
    & {\quad$1$\quad} & {\quad$2$\quad} & {\quad$3$\quad} & {\quad$4$\quad} & {\quad$5$\quad} & {\quad$21$\quad} \\
    \hline
    Condition number $\kappa$  & $17.7$ & $31.1$ & $79.5$ & $148$ & $184$ & $244$  \\ \hline    
  \end{tabular}
    \caption{Condition numbers for partial Yeti-footprints}
    \label{tab:infsuppartialyeti}
  \end{center} 
\end{table}

\section{A IETI-DP solver for the Stokes system}
\label{sec:4}

In this section, we outline the setup of the proposed IETI-DP method for solving the discretized Stokes problem~\eqref{eq:discstokes}.
For the IETI-DP method, we have to assemble the variational problem
locally. So, the still uncoupled problem is to find $(\kbox{\mathbf{u}},\kbox{p})\in \kbox{\mathbf V} \kbox{\times Q}$ such that
\begin{align}
  \nonumber
  \begin{split} 
	(\nabla \kbox{\mathbf{u}}, \nabla \kbox{\mathbf{v}})_{L^2(\Omega^{(k)})} + (\kbox{p}, \nabla \cdot \kbox{\mathbf{v}})_{L^2(\Omega^{(k)})} &\stackrel{!}{=} (\mathbf{f},\kbox{\mathbf{v}})_{L^2(\Omega^{(k)})} \; \foralls \kbox{\mathbf{v}} \in \kbox{\mathbf V},\\
	(\nabla \cdot \kbox{\mathbf{u}}, \kbox{q})_{L^2(\Omega^{(k)})} \qquad\qquad\qquad\qquad\qquad\;\;\,&\stackrel{!}{=} 0 \qquad\qquad\qquad\;\, \foralls \kbox{q} \in \kbox{Q}.
  \end{split} 
\end{align}
Here, we use the notation $\stackrel{!}{=}$ to remind ourselves that the coupling is still missing.
By discretizing these bilinear forms using the tensor-product bases,
we obtain linear systems
\begin{equation}
  \nonumber
    \kbox{A} \kbox{\underline{\mathbf{x}}} :=
		\begin{pmatrix}
		  \kbox{K}  & \kbox{D}^{\top} \\
		  \kbox{D}  & 0 
		\end{pmatrix}
        \begin{pmatrix}
          \underline{\mathbf{u}}^{(k)}\\
          \underline{p}^{(k)}
        \end{pmatrix}
        \stackrel{!}{=}
        \begin{pmatrix}
          \underline{\mathbf{f}}^{(k)} \\
          0
        \end{pmatrix}
        =:
        \underline{\mathbf{b}}^{(k)}.
\end{equation}
Here and in what follows, underlined quantities refer to the coefficient
representations of the corresponding functions. We
first represent the spaces $\mathbf{V}^{(k)}$ as a direct sum
\begin{equation}\label{eq:Vsplit}
	\mathbf{V}^{(k)} = \mathbf{V}^{(k)}_{\Gamma} \oplus \mathbf{V}^{(k)}_{\I},
\end{equation}
where $\mathbf{V}^{(k)}_{\I}$ is spanned by the basis functions which vanish on the interfaces and $\mathbf{V}^{(k)}_{\Gamma}$ is spanned by the remaining functions, i.e., the functions that are active on the interfaces (which includes the primal degrees of freedom). Assuming a corresponding ordering of the basis functions, we have
\begin{equation}
  \label{eq:patchStokeMatrix}
    \kbox{A} \kbox{\underline{\mathbf{x}}} =
		\begin{pmatrix}
		  \kbox{K_{\Gamma\Gamma}} & \kbox{K_{\Gamma\I}} & \kbox{D_\Gamma}^{\top} \\
		  \kbox{K_{\I\Gamma}}     & \kbox{K_{\I\I}}     & \kbox{D_\I}^{\top} \\
		  \kbox{D_\Gamma}         & \kbox{D_\I}         & 0 
		\end{pmatrix}
        \begin{pmatrix}
          \underline{\mathbf{u}}_\Gamma^{(k)}\\
          \underline{\mathbf{u}}_\I^{(k)}\\
          \underline{p}^{(k)}
        \end{pmatrix}
        \stackrel{!}{=}
        \begin{pmatrix}
          \underline{\mathbf{f}}_\Gamma^{(k)} \\
          \underline{\mathbf{f}}_\I^{(k)} \\
          0
        \end{pmatrix}
        =
        \underline{\mathbf{b}}^{(k)}.
\end{equation}
Analogous to the case of the Poisson problem, the local systems correspond to pure Neumann problems, unless the corresponding patch contributes to the Dirichlet boundary. These systems are not uniquely solvable since the constant velocities are in the null space of $\kbox{A}$.
To ensure that the system matrices of the patch-local problems are non-singular, we introduce primal degrees of freedom, whose continuity across the patches is enforced strongly (\emph{continuity conditions}):
\begin{itemize}
	\item the function values of the velocity at each of the corners of the patch, i.e.,
	\begin{equation}\label{eq:cornerprimal}
			\textbf u^{(k)}(x_j) = \textbf u^{(\ell)}(x_j)
			\quad
			\foralls
			j=1,\ldots,J
			\mbox{ and }
			\ell,k\in\mathcal N_x(j),
			\mbox{ and}
	\end{equation}
	\item the integrals of the normal components of the velocity, i.e.,
	\begin{equation}\nonumber
			\int_{\Gamma^{(k,\ell)}} \mathbf u^{(k)} \cdot \mathbf n^{(k)}
			\mathrm d s
			=
			-
			\int_{\Gamma^{(k,\ell)}} \mathbf u^{(\ell)} \cdot \mathbf n^{(\ell)}
			\mathrm d s
			\quad
			\foralls
			k=1,\ldots,K \mbox{ and }\ell\in\mathcal N_\Gamma(k).
	\end{equation}
\end{itemize}
Since the constraint~\eqref{eq:cornerprimal} is vector-valued, there are actually 2 primal degrees of freedom for each corner. Overall, for patches that do not contribute to the Dirichlet boundary, there are 12 primal degrees of freedom related to the continuity conditions. The matrix $\kbox{C_\C}$ evaluates the primal degrees of freedom associated to the patch $\Omega^{(k)}$;
thus the relation $\kbox{C_\C} \kbox{\underline{\mathbf{u}}_\Gamma} = 0$
guarantees that the primal degrees of freedom vanish.

Additionally, we introduce primal degrees of freedom for the pressure in order to be able to realize the condition that the average pressure vanishes (\emph{averaging conditions}). This is done by fixing the average pressure on each patch individually to zero and by allowing patchwise constant pressure functions in the primal problem. The matrix $\kbox{C_\A}$ evaluates the average pressure on the patch, i.e.,
\begin{equation}\label{eq:cadef}
			\kbox{C_\A} \kbox{\underline{p}}
			=
			|\Omega^{(k)}|^{-1} \int_{\Omega^{(k)}} p^{(k)}(x) \,\mathrm dx.
\end{equation}
Thus, the relation  $\kbox{C_\A} \kbox{\underline{p}} = 0$
guarantees that the average pressure vanishes.

The corresponding Lagrangian multipliers are denoted by $\kbox{\mu_\C}$ and $\kbox{\mu_\A}$. So, we obtain
\begin{equation*}
    \bar{A}^{(k)}\bar{\underline{\mathbf{x}}}^{(k)} :=
		\begin{pmatrix}
		  \kbox{K_{\Gamma\Gamma}}  &\kbox{K_{\Gamma\I}}  & \kbox{D_\Gamma}^\top &0              &\kbox{C_\C}^\top\\
		  \kbox{K_{\I\Gamma}}      &\kbox{K_{\I\I}}      & \kbox{D_\I}^\top    &0              &0\\
		  \kbox{D_\Gamma}          &\kbox{D_\I}          & 0             &\kbox{C_\A}^\top&0\\
		  0                        &0                    & \kbox{C_\A}    &0              &0\\
		  \kbox{C_\C}              &0                    &0              &0              &0\\
		\end{pmatrix}
        \begin{pmatrix}
          \kbox{\underline{\mathbf{u}}_\Gamma}\\
          \kbox{\underline{\mathbf{u}}_\I}\\
          \kbox{\underline{p}}\\
          \kbox{\mu_\A}\\
          \kbox{\underline{\mu}_\C}
        \end{pmatrix}
       \stackrel{!}{=}
        \begin{pmatrix}
          \kbox{\underline{\mathbf{f}}_\Gamma} \\
          \kbox{\underline{\mathbf{f}}_\I} \\
          0\\
          0\\
          0
        \end{pmatrix}=:\bar{\underline{\mathbf{b}}}^{(k)}.
\end{equation*}
The continuity of the velocity between the patches is enforced 
by the matrices $\kbox{B}$. The term
\[
		\sum_{k=1}^K \kbox{B} \underline{\mathbf u}_\Gamma^{(k)}
\]
evaluates to a vector containing the differences of the coefficients of any two matching
(cf. Assumption~\ref{assumption:conforming}) basis functions. Here, we do not include
the vertex values (see Figure~\ref{fig:nonredundant}) since these are primal
degrees of freedom anyway. Note that the constraint matrices $\kbox{B}$ are redundant
to the condition on the integrals of the normal components of the velocity over the edges.
The relation
\[
  \sum_{k=1}^K \kbox{B} \underline{\mathbf u}_\Gamma^{(k)} = 0
  \quad\text{or}\quad
  \sum_{k=1}^K \kbox{\bar{B}} \underline{\bar{\mathbf x}}^{(k)} =0
  \quad\text{using}\quad
		\kbox{\bar{B}}:=
		\begin{pmatrix}
 		 \kbox{B} & 0& 0 & 0 &0
		\end{pmatrix}
\]
guarantees the continuity of the velocity function across the patches.
\begin{figure}
	\begin{center}
	  \begin{tikzpicture}
      \tikzset{
        schraffiert/.style={pattern=north east lines,pattern color=#1},
        schraffiert/.default=black
      }

			\fill[schraffiert=gray!80] (-0.2,-0.1) -- (1.6,-0.1) -- (1.6,1.7) -- (-0.2,1.7);
			\fill[schraffiert=gray!80] (-0.2,-0.9) -- (1.6,-0.9) -- (1.6,-2.7) -- (-0.2,-2.7);
			\fill[schraffiert=gray!80] (4.2,-0.1) -- (2.4,-0.1) -- (2.4,1.7) -- (4.2,1.7);
			\fill[schraffiert=gray!80] (4.2,-0.9) -- (2.4,-0.9) -- (2.4,-2.7) -- (4.2,-2.7);

			\draw (-0.2,-0.1) -- (1.6,-0.1) -- (1.6,1.7) node at (0.7,0.8) {$\Omega^{(1)}$}; 
			\draw (-0.2,-0.9) -- (1.6,-0.9) -- (1.6,-2.7) node at (0.7,-1.85) {$\Omega^{(2)}$}; 
			\draw (4.2,-0.1) -- (2.4,-0.1) -- (2.4,1.7) node at (3.4,0.8) {$\Omega^{(3)}$}; 
			\draw (4.2,-0.9) -- (2.4,-0.9) -- (2.4,-2.7) node at (3.4,-1.85) {$\Omega^{(4)}$}; 
			
			\draw (1.6,-0.1) node[circle, fill, inner sep = 2pt] (A1) {};
			\draw (1.6,0.75) node[circle, fill, inner sep = 2pt] (A2) {};
			\draw (1.6,1.5) node[circle, fill, inner sep = 2pt] (A3) {};
			\draw (0.75,-0.1) node[circle, fill, inner sep = 2pt] (A4) {};
			\draw (0,-0.1) node[circle, fill, inner sep = 2pt] (A5) {};
			
			\draw (2.4,-0.1) node[circle, fill, inner sep = 2pt] (B1) {};
			\draw (2.4,0.75) node[circle, fill, inner sep = 2pt] (B2) {};
			\draw (2.4,1.5) node[circle, fill, inner sep = 2pt] (B3) {};
			\draw (3.25,-0.1) node[circle, fill, inner sep = 2pt] (B4) {};
			\draw (4,-0.1) node[circle, fill, inner sep = 2pt] (B5) {};
			
			\draw (1.6,-0.9) node[circle, fill, inner sep = 2pt] (C1) {};
			\draw (1.6,-1.75) node[circle, fill, inner sep = 2pt] (C2) {};
			\draw (1.6,-2.5) node[circle, fill, inner sep = 2pt] (C3) {};
			\draw (0.75,-0.9) node[circle, fill, inner sep = 2pt] (C4) {};
			\draw (0,-0.9) node[circle, fill, inner sep = 2pt] (C5) {};
			
			\draw (2.4,-0.9) node[circle, fill, inner sep = 2pt] (D1) {};
			\draw (2.4,-1.75) node[circle, fill, inner sep = 2pt] (D2) {};
			\draw (2.4,-2.5) node[circle, fill, inner sep = 2pt] (D3) {};
			\draw (3.25,-0.9) node[circle, fill, inner sep = 2pt] (D4) {};
			\draw (4,-0.9) node[circle, fill, inner sep = 2pt] (D5) {};
			
			\draw[<->, line width = 1pt, latex-latex]
			(A2) edge (B2) (A3) edge (B3)
			(A4) edge (C4) (A5) edge (C5)
			(B4) edge (D4) (B5) edge (D5)
			(C2) edge (D2) (C3) edge (D3);
			\end{tikzpicture}
	  \caption{Enforcing continuity of the velocity space. The corners are excluded.}
      \label{fig:nonredundant}
	\end{center}
\end{figure}

Moreover, we introduce the primal problem, i.e., the global problem for the primal degrees of freedom. We use a $A^{(k)}$-orthogonal basis for the primal degrees of freedom. This basis is represented in terms of the basis functions of the basis for $\mathbf V_\Gamma^{(k)}\times \mathbf V_\I^{(k)}\times Q^{(k)}$ using the matrix $\Psi^{(k)}$, which are the solution of the system
\begin{equation}\label{eq:basisdef}
		\begin{pmatrix}
		  \kbox{K_{\Gamma\Gamma}}  &\kbox{K_{\Gamma\I}}  & \kbox{D_\Gamma}^\top &0              &\kbox{C_\C}^\top\\
		  \kbox{K_{\I\Gamma}}      &\kbox{K_{\I\I}}      & \kbox{D_\I}^\top    &0              &0\\
		  \kbox{D_\Gamma}          &\kbox{D_\I}          & 0             &\kbox{C_\A}^\top&0\\
		  0                        &0                    & \kbox{C_\A}    &0              &0\\
		  \kbox{C_\C}              &0                    &0              &0              &0\\
		\end{pmatrix}
		\underbrace{
		\begin{pmatrix}
					\kbox{\Psi_{\Gamma\A}}   & \kbox{\Psi_{\Gamma\C}} \\
					\kbox{\Psi_{\I\A}}       & \kbox{\Psi_{\I\C}} \\
					\kbox{\Psi_{p\A}}        & \kbox{\Psi_{p\C}} \\
					\kbox{\Phi_{\C\A}}       & \kbox{\Phi_{\C\C}} \\
					\kbox{\Phi_{\A\A}}       & \kbox{\Phi_{\A\C}} \\
		\end{pmatrix}
		}_{\displaystyle 
		\hspace{-4em}
		\begin{pmatrix}
		 \kbox{\Psi}\\\kbox{\Phi}
		\end{pmatrix}:=
		\begin{pmatrix}
		 \kbox{\Psi_\A}&\kbox{\Psi_\C}\\\kbox{\Phi_\A}&\kbox{\Phi_\C}
		\end{pmatrix}:=
		\hspace{-4em}
		}
		=
		\begin{pmatrix}
					0 & 0 \\
					0 & 0 \\
					0 & 0 \\
					\kbox{R_\A} & 0 \\
					0 & \kbox{R_\C}\\
		\end{pmatrix},
\end{equation}
where $\kbox{R_\A}$ and $\kbox{R_\C}$ are boolean matrices that select the primal degrees of freedom which are active on the patch~$\Omega^{(k)}$.
We have $\kbox{R_\A}\in \mathbb R^{1\times K}$ and
$\kbox{R_\C}\in \mathbb{R}^{N^{(k)}_{\Pi}\times N_\Pi}$, where $N^{(k)}_{\Pi}$ is the number of primal degrees of freedom corresponding to the continuity condition (thus $N^{(k)}_{\Pi}=12$ if the patch does not contribute to the Dirichlet boundary) and $N_\Pi$ is the overall number of primal degrees of freedom associated to the continuity condition.

We define the system matrix, right-hand side and jump matrix for the primal problem as
\[
A_\Pi := \sum^K_{k=1} \kbox{\Psi}^\top \kbox{A}\kbox{\Psi},\quad
\mathbf{b}_\Pi := \sum^K_{k=1} \kbox{\Psi}^\top \kbox{\underline{\mathbf{b}}}
\quad \text{and}\quad
B_\Pi := \sum^K_{k=1} \kbox{B}\kbox{\Psi}.
\]
So far, we have a pressure averaging condition for the patch-local problems and the primal problem allows for patchwise constant pressure modes. So, in order to obtain unique solvability of the problem, we need to add a global condition that guarantees that the average pressure vanishes. So, we augment the primal system and obtain the following primal system
\[
    \bar{A}_\Pi \underline{\bar{\mathbf{x}}}_{\Pi} :=
	     \begin{pmatrix}
          A_\Pi &C_\Pi^\top\\
          C_\Pi & 0
		 \end{pmatrix}
		 \begin{pmatrix}
          \underline{\mathbf{x}}_{\Pi} \\
          \underline{\mu}_{\Pi} 
        \end{pmatrix}
        \stackrel{!}{=}
        \begin{pmatrix}
          \underline{\mathbf{b}}_\Pi \\
          0
        \end{pmatrix}
        =:
        \bar{\underline{\mathbf{b}}}_\Pi,
\]
where $C_\Pi\in\mathbb R^{1\times (N_\Pi+K)}$
is such that the relation $C_\Pi\, \underline{\mathbf x}_\Pi=0$
guarantees that the average of the pressure vanishes. Correspondingly,
we define $\bar{B}_\Pi:=\begin{pmatrix} B_\Pi & 0 \end{pmatrix}$.
So, we are finally able to write down the overall IETI-DP system:
\[
		\begin{pmatrix}
				\kkbox{\bar{A}}{1}   &&&& \kkbox{\bar{B}}{1}^\top \\
				& \ddots              &&& \vdots                  \\
				&& \kkbox{\bar{A}}{K}  && \kkbox{\bar{B}}{K}^\top \\
				&&& \bar{A}_\Pi         & \bar{B}_\Pi^\top        \\
		    \kkbox{\bar{B}}{1} & \hdots & \kkbox{\bar{B}}{K} & \bar{B}_\Pi  \\
		\end{pmatrix}
		\begin{pmatrix}
			\kkbox{\underline{\bar{\mathbf x}}}1\\
			\vdots\\
			\kkbox{\underline{\bar{\mathbf x}}}K\\
			\underline{\bar{\mathbf x}}_\Pi \\
			\underline{\lambda}
		\end{pmatrix}
		=
		\begin{pmatrix}
			\kkbox{\underline{\bar{\mathbf b}}}1\\
			\vdots\\
			\kkbox{\underline{\bar{\mathbf b}}}K\\
			\underline{\bar{\mathbf b}}_\Pi \\
			0		
		\end{pmatrix}		
		.
\]
For solving this linear system, we take the Schur complement with
respect to the Lagrange multipliers $\underline{\lambda}$. 
This means that we solve
\begin{equation}
  \label{eq:lambdaSys}
  \bar{F}\underline{\lambda} = \underline{g},
\end{equation}
where
\begin{equation*}
	\bar{F} := \bar{F}_\Pi+\sum^{K}_{k = 1} \bar{F}^{(k)},
	\quad 
	\bar{F}_\Pi :=  \bar{B}_\Pi \bar{A}^{-1}_\Pi \bar{B}_\Pi^\top,
	\quad
	\kbox{\bar F} := \kbox{\bar B} \kbox{\bar{A}}^{-1}  \kbox{\bar B}^{\top}
\end{equation*}
and
\[
\underline{g}:= \bar{B}_\Pi \bar{A}^{-1}_\Pi\bar{\underline{\mathbf{b}}}_\Pi + \sum^{K}_{k = 1}  \bar{B}^{(k)} \kbox{\bar{A}}^{-1}\bar{\underline{\mathbf{b}}}^{(k)}
\]
The patch-local solutions are then recovered by
\[
\underline{\bar{\mathbf{x}}}_\Pi = \bar{A}_\Pi^{-1}\left(\underline{\bar{\mathbf{b}}}_\Pi-\bar B_\Pi^{\top}\underline{\lambda}\right)
\quad\mbox{and}\quad
\kbox{\underline{\bar{\mathbf{x}}}} = \kbox{\bar{A}}^{-1}\left(\underline{\bar{\mathbf{b}}}^{(k)}- \kbox{\bar B}^{\top}\underline{\lambda}\right).
\]
The final solution is then obtained by distributing $\underline{\bar{\mathbf{x}}}_\Pi$ to the patches using the matrices $\Psi^{(k)}$.

We solve the system~\eqref{eq:lambdaSys} with a conjugate gradient solver. (We will show in Section~\ref{sec:5} that $\bar F$ is indeed positive semidefinite.) The conjugate gradient solver is preconditioned with the scaled Dirichlet preconditioner. Usually, the scaled Dirichlet preconditioner refers to the local solution of Dirichlet problems of the corresponding differential equation, which would mean that we should consider the Stokes equations. However, this is
not necessary. Indeed, the local Dirichlet problems are solved to realize the $H^{1/2}$-norm. For this purpose, it is sufficient to only consider local Dirichlet problems of the Poisson equation. A recent numerical study, cf. Ref.~\refcite{sogn2021dual}, has shown that this approach is not only simpler, but also leads to better convergence behavior.
Thus, we define the local Schur complements via
\begin{equation}\label{eq:skdef}
		\kbox{S_K} :=
		\kbox{K_{\Gamma\Gamma}}-
		\kbox{K_{\Gamma\I}}  
		\kbox{K_{\I\I}}^{-1}
		\kbox{K_{\I\Gamma}}.
\end{equation}
Then, the scaled Dirichlet preconditioner is given by
\begin{equation}
  \label{eq:precM}
		M_{\mathrm{sD}} := \sum_{k=1}^K \kbox{B} \scaling_k^{-1} \kbox{S_{K}} \scaling_k^{-1} \kbox{B}^\top,
\end{equation}
where $\scaling_k:=2I$ is set up based on the principle of multiplicity scaling.

\section{Condition number analysis for the IETI solver}
\label{sec:5}

The convergence rates of the conjugate gradient solver are estimated
based on the condition number of the preconditioned system $ M_{\mathrm{sD}} \bar F$. Following the framework introduced in Ref.~\refcite{MandelDohrmannTezaur:2005a}, we rewrite the whole problem equivalently as a formulation only living to the skeleton.
First, we define the skeleton formulation associated to the main saddle point matrix $\kbox{A}$ via
\begin{equation}\label{eq:sgammadef}
	\kbox{S_A} :=
	\kbox{K_{\Gamma\Gamma}} -
	\begin{pmatrix}
			\kbox{K_{\Gamma\I}}         & \kbox{D_\Gamma}^\top        & 0
	\end{pmatrix}
	\begin{pmatrix}
		  \kbox{K_{\I\I}} &\kbox{D_{\I}}^\top & 0 \\
		  \kbox{D_{\I}}   &0                  & \kbox{C_\A}^\top \\
		  0               &\kbox{C_\A}         & 0     
	\end{pmatrix}^{-1}
	\begin{pmatrix}
				\kbox{K_{\I\Gamma}} \\
				\kbox{D_{\Gamma}}   \\
				0
	\end{pmatrix}.
\end{equation}
Note that the inverse is well-defined due the constraint on the average pressure.

Next, we define corresponding function spaces.
Let $\mathbf W:=\mathbf W^{(1)} \times \cdots \times \mathbf W^{(K)}$
with
\[
		\mathbf W^{(k)} := \{ \mathbf v|_{\partial\Omega^{(k)}} \,:\, \mathbf v\in \mathbf V^{(k)} \}
\]
be the skeleton space and
$\mathcal H_k: \mathbf W^{(k)} \rightarrow \mathbf V^{(k)}$
be the discrete harmonic extension, i.e.,
$\mathcal H_k \mathbf w^{(k)}\in \mathbf V^{(k)}$ such that it minimizes
$|\mathcal H_k \mathbf w^{(k)}|_{H^1(\Omega^{(k)})}$
under the constraint $\left(\mathcal H_k \mathbf w^{(k)}\right)|_{\partial\Omega^{(k)}}=\mathbf w^{(k)}$.
The function space $\widetilde{\mathbf W}_\Delta:=
\widetilde{\mathbf W}_\Delta^{(1)}\times \cdots \times \widetilde{\mathbf W}_\Delta^{(K)}$,
where
\[
		\widetilde{\mathbf W}_\Delta^{(k)} :=
		\left\{
				\mathbf w^{(k)} \in \mathbf W^{(k)}
				:
				\begin{array}{lr}
				\mathbf w^{(k)}(x_j) = 0
				&\hspace{-4em}\foralls j\in\{1,\ldots,J\} \mbox{ with } k\in\mathcal N_x(j)
				\\
				\int_{\Gamma^{(k,\ell)}} \mathbf w^{(k)} \cdot \mathbf n^{(k)} \; \mathrm d s = 0
				&\foralls \ell\in\mathcal N_\Gamma(k)
				\end{array}
		\right\},
\]
represents the functions that satisfy the primal degrees of freedom homogeneously. The space $\widetilde{\mathbf W}$, in which all the approximate solutions live in, is given by
\[
		\widetilde{\mathbf W} :=
		\left\{
				\mathbf w \in \mathbf W
				:
				\begin{array}{lr}
				\mathbf w^{(k)}(x_j)=\mathbf w^{(\ell)}(x_j)
				&\hspace{-5em}\foralls k,\ell\in\mathcal N_x(j)
				 \mbox{ with } j\in \{1,\ldots,J\},
				\\
				\int_{\Gamma^{(k,\ell)}} (\mathbf w^{(k)}-\mathbf w^{(\ell)}) \cdot \mathbf n^{(k)}\, \mathrm d s = 0
				&\foralls k \mbox{ and } \ell\in\mathcal N_\Gamma(k),
				\\
				\int_{\partial\Omega^{(k)}} \mathbf w^{(k)} \cdot \mathbf n^{(k)}\, \mathrm d s = 0
				&\quad \foralls k\in\{1,\ldots,K\}
				\end{array}
		\right\},
\]
where $\mathbf w=(\mathbf w^{(1)},\cdots,\mathbf w^{(K)})$.
The first two lines in definition of this space refer to continuity constraints on the
velocity, both on the corner values of the velocities and the integrals of the normal components
of the velocity on all of the edges. Since the original formulation of the discretized Stokes problem~\eqref{eq:discstokes} uses a continuous discretization space for the velocity, these continuity
conditions are obviously satisfied by the solution. However, the definition also introduces a third class of constraints of the form
\begin{equation}\label{eq:w:cond}
		\int_{\partial \Omega^{(k)}} \mathbf w^{(k)}\cdot \mathbf n^{(k)}\, \mathrm d s = 0
		\quad
		\foralls
		k=1,\ldots,K,
\end{equation}
i.e., that the inflow equals the outflow on each patch.
Let $\mathbf u=(\mathbf u^{(1)},\ldots\mathbf u^{(K)})$ be the exact solution with skeleton representation $\mathbf w^{(k)}:=\mathbf u^{(k)}|_{\partial\Omega{(k)}}$ and let 
\[
	\chi_{\Omega^{(k)}} (x) := \begin{cases} 1 & \mbox{ if } x\in \Omega^{(k)}, \\
	                                         0 & \mbox{ otherwise.} \end{cases} 
\]
be the characteristic functions. By integration by parts and since the functions
$\tilde{\chi}_{\Omega^{(k)}}(x):= \chi_{\Omega^{(k)}}(x)-|\Omega^{(k)}|/|\Omega|$ are contained in $Q$, we know from~\eqref{eq:discstokes} that
\[
		\int_{\partial \Omega^{(k)}} \mathbf w^{(k)}\cdot \mathbf n^{(k)}\, \mathrm d s
		=
		\int_{\Omega^{(k)}} \nabla \cdot \mathbf u^{(k)} \, \mathrm d x
		=
		\frac{|\Omega^{(k)}|}{|\Omega|}
		\int_{\Omega} \nabla \cdot \mathbf u \, \mathrm d x
		=
		\frac{|\Omega^{(k)}|}{|\Omega|}
		\int_{\partial \Omega} \mathbf u\cdot \mathbf n\, \mathrm d s,
\]
which vanishes due to the Dirichlet boundary conditions. This shows that~\eqref{eq:w:cond} holds for the solution. The space $\widetilde{\mathbf W}_\Pi$ is the orthogonal complement to $\widetilde{\mathbf W}_\Delta$
in $\widetilde{\mathbf W}$, i.e., we define
\[
		\widetilde{\mathbf W}_\Pi :=
		\left\{
				\mathbf w \in \widetilde{\mathbf W}
				:
				\sum_{k=1}^K 
				(\kbox{\underline{\mathbf w}},\kbox{\underline{\mathbf v}})_{S_A^{(k)}}=0
				\mbox{ for all } \mathbf v\in \widetilde{\mathbf W}_\Delta
		\right\}.
\]
Here and in what follows, for any function, say $\kbox{\mathbf w}$
in $\kbox{\mathbf W}$, 
$\kbox{\widetilde{\mathbf W}}$, $\kbox{\widetilde{\mathbf W}_\Delta}$
or $\kbox{\widetilde{\mathbf W}_\Pi}$, the corresponding underlined
symbol, here $\kbox{\underline{\mathbf w}}$, denotes the representation
of the corresponding function with respect to the basis for the space
$\kbox{\mathbf W}$. So, functions in the all of these spaces are represented with respect to the same basis.

For the analysis, we introduce the following lemma that allows to write expressions
involving inverses of matrices as suprema.
\begin{lemma}\label{lem:supmat}
	Let $A\in \mathbb R^{n\times n}$ be a symmetric positive definite matrix
	and let $B\in \mathbb R^{m_1\times n}$, $C\in \mathbb R^{m_2\times n}$ and
	$D\in \mathbb R^{m_3\times m_2}$.
	Then, we have 
	\begin{equation}\label{eq:supmat0}
		\|\underline \lambda \|_{M_0} = \sup_{\underline w \in W_0}
				\frac{(B \underline w, \underline \lambda)_{\ell^2}}{\| \underline w\|_A },
		\;\mbox{where}\;
		M_0 := B A^{-1} B^\top
	\end{equation}
	and $W_0 := \mathbb R^{n}$,
	\begin{equation}\label{eq:supmat1}
		\|\underline \lambda \|_{M_1} = \sup_{\underline w \in W_1}
				\frac{(B \underline w, \underline \lambda)_{\ell^2}}{\| \underline w\|_A },
		\;\mbox{where}\;
		M_1 :=
			\begin{pmatrix} B & 0 \end{pmatrix}
			\begin{pmatrix} A & C^\top \\ C & 0 \end{pmatrix}^{-1}
			\begin{pmatrix} B^\top \\ 0 \end{pmatrix}
	\end{equation}
	and $W_1 := \{ \underline w \in \mathbb R^{n} \, :\, C \underline w = 0 \}$, 
	and
	\begin{equation}\label{eq:supmat2}
		\|\underline \lambda \|_{M_2} = \sup_{\underline w \in W_2}
				\frac{(B \underline w, \underline \lambda)_{\ell^2}}{\| \underline w\|_A },
		\;\mbox{where}\;
		M_2 :=
			\begin{pmatrix} B & 0 & 0\end{pmatrix}
			\begin{pmatrix} A & C^\top & 0 \\ C & 0 & D^\top \\ 0 & D & 0 \end{pmatrix}^{-1}
			\begin{pmatrix} B^\top \\ 0 \\ 0 \end{pmatrix}
	\end{equation}
	and $W_2 := \{ \underline w \in \mathbb R^{n} \, :\, D \underline \mu = 0 \Rightarrow
	\underline \mu^\top C \underline w = 0 \mbox{ for all }\underline \mu\in \mathbb R^{m_2} \}$.
\end{lemma}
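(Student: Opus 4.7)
The unifying plan is to recognize each $M_i$ as representing the squared norm of the linear functional $\underline{v} \mapsto (B\underline{v},\underline{\lambda})_{\ell^2}$ on the subspace $W_i$ equipped with the $A$-inner product $(\cdot,\cdot)_A := (A\cdot,\cdot)_{\ell^2}$. In each case I would solve the saddle-point system that defines $M_i\underline{\lambda}$, identify the primal component $\underline{w}$ as the Riesz representative of that functional on $W_i$, and then invoke
\[
  \|\underline{w}\|_A \;=\; \sup_{\underline{v}\in W_i}
    \frac{(\underline{v},\underline{w})_A}{\|\underline{v}\|_A}
  \;=\; \sup_{\underline{v}\in W_i}
    \frac{(B\underline{v},\underline{\lambda})_{\ell^2}}{\|\underline{v}\|_A},
\]
the second equality being the defining property of $\underline{w}$. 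Since by construction one always has $(\underline{w},\underline{w})_A = \underline{\lambda}^\top B\underline{w} = \underline{\lambda}^\top M_i \underline{\lambda}$, this gives $\|\underline{\lambda}\|_{M_i} = \|\underline{w}\|_A$, and hence the claimed supremum formula.

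For \eqref{eq:supmat0} the explicit choice $\underline{w} := A^{-1}B^\top \underline{\lambda}$ trivially satisfies $(\underline{w},\underline{v})_A = (B\underline{v},\underline{\lambda})_{\ell^2}$ for every $\underline{v}\in\mathbb{R}^n$ and $\|\underline{w}\|_A^2 = \underline{\lambda}^\top M_0 \underline{\lambda}$, so the claim follows at once. For \eqref{eq:supmat1}, solving the saddle system produces $(\underline{w},\underline{\mu})$ with $A\underline{w} + C^\top\underline{\mu} = B^\top\underline{\lambda}$ and $C\underline{w} = 0$, so $\underline{w}\in W_1 = \ker C$. For any test $\underline{v}\in W_1$ one has $(C^\top \underline{\mu},\underline{v})_{\ell^2} = (\underline{\mu},C\underline{v})_{\ell^2} = 0$, hence $(\underline{w},\underline{v})_A = (B\underline{v},\underline{\lambda})_{\ell^2}$, and the template applies.

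For \eqref{eq:supmat2} the saddle system yields $(\underline{w},\underline{\mu},\underline{\nu})$ with $A\underline{w}+C^\top\underline{\mu} = B^\top \underline{\lambda}$, $C\underline{w}+D^\top\underline{\nu} = 0$ and $D\underline{\mu} = 0$. I expect the only genuine obstacle to be the correct parsing of the set $W_2$ and the verification that the primal solution $\underline{w}$ lies in it; once this is done, the rest is mechanical. For membership of $\underline{w}$ in $W_2$, take any $\underline{\tilde\mu}$ with $D\underline{\tilde\mu} = 0$ and compute $\underline{\tilde\mu}^\top C\underline{w} = -\underline{\tilde\mu}^\top D^\top \underline{\nu} = -(D\underline{\tilde\mu})^\top \underline{\nu} = 0$, confirming $\underline{w}\in W_2$. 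For the Galerkin identity, pick any $\underline{v}\in W_2$; applying the defining property of $W_2$ to the particular multiplier $\underline{\mu}$ (which does satisfy $D\underline{\mu}=0$) yields $\underline{\mu}^\top C\underline{v} = 0$, so $(A\underline{w},\underline{v})_{\ell^2} = (B\underline{v},\underline{\lambda})_{\ell^2}$, and the Riesz step closes the proof exactly as in the previous cases.
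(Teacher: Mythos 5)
Your argument is correct, but it follows a genuinely different route from the paper's. The paper proves only \eqref{eq:supmat2} (citing the literature for \eqref{eq:supmat0} and \eqref{eq:supmat1}) and does so by starting from the supremum: it takes a maximizer $\underline w\in W_2$ normalized by $\underline w^\top A\underline w=1$, writes the first-order KKT system with Lagrange multipliers $\xi$ and $\underline\mu$, observes that the feasibility condition $D\underline\nu=0\Rightarrow\underline\nu^\top C\underline w=0$ is equivalent to $C\underline w+D^\top\underline\rho=0$ for some $\underline\rho$, and then recognizes the resulting linear system as the saddle-point matrix defining $M_2$, extracting $\xi=-\underline\lambda^\top B\underline w$ and $-\xi\,\underline\lambda^\top B\underline w=\|\underline\lambda\|_{M_2}^2$. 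You instead go in the opposite direction: you solve the saddle-point system, show that its primal block $\underline w$ lies in $W_2$ and is the Riesz representative of $\underline v\mapsto (B\underline v,\underline\lambda)_{\ell^2}$ on $\bigl(W_2,(\cdot,\cdot)_A\bigr)$, and conclude by Cauchy--Schwarz in the $A$-inner product. The two proofs hinge on the same algebraic facts (in particular $\underline\mu^\top C\underline w=0$ because $D\underline\mu=0$ and $C\underline w\in\operatorname{range}D^\top$), but yours buys a uniform treatment of all three identities, is constructive, and avoids the paper's need to justify existence of a maximizer and the restriction of the optimality conditions to feasible directions; the paper's version is the natural one if one regards the lemma as ``evaluating'' the supremum. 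One detail worth making explicit in your write-up: the Riesz/Cauchy--Schwarz step $\sup_{\underline v\in W_i}(\underline v,\underline w)_A/\|\underline v\|_A=\|\underline w\|_A$ needs $\underline w\in W_i$ (which you verify) and the convention, stated in the paper after \eqref{eq:infsupcont}, that the supremum excludes $\underline v=0$, with the degenerate case $\underline w=0$ handled separately.
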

The statements~\eqref{eq:supmat0} and \eqref{eq:supmat1} are 
standard, cf. Ref.~\refcite{fortin1991mixed}, Chapter~II, §~1.1. For completeness,
we give a proof of~\eqref{eq:supmat2} in the Appendix.

First we note the equivalence of $\kbox{S_A}$ and $\kbox{S_K}$.
\begin{lemma}\label{lem:h1like}
	We have
	\[
		\kbox{\underline{\mathbf w}}^\top S_K^{(k)} \kbox{\underline{\mathbf w}}
		\le
		\kbox{\underline{\mathbf w}}^\top S_A^{(k)} \kbox{\underline{\mathbf w}}
		\le 3\frac{\delta^2}{\beta_k^{2}} 
		\kbox{\underline{\mathbf w}}^\top S_K^{(k)} \kbox{\underline{\mathbf w}}
	\]
	for all skeleton functions $\kbox{\mathbf w}\in \kbox{\mathbf W}$,
	where $S_K^{(k)}$ is as defined in~\eqref{eq:skdef}.
\end{lemma}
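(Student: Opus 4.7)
The plan is to interpret both quadratic forms variationally. For any boundary data $\kbox{\mathbf w}\in \kbox{\mathbf W}$ with coefficient vector $\kbox{\underline{\mathbf w}}$, the quantity $\kbox{\underline{\mathbf w}}^\top S_K^{(k)} \kbox{\underline{\mathbf w}}$ equals the minimum of $|\mathbf v|_{H^1(\Omega^{(k)})}^2$ over all $\mathbf v\in \mathbf V^{(k)}$ with $\mathbf v|_{\partial\Omega^{(k)}}=\kbox{\mathbf w}$, attained by the discrete Poisson-harmonic extension $\mathbf v_K$. Similarly, $\kbox{\underline{\mathbf w}}^\top S_A^{(k)} \kbox{\underline{\mathbf w}}$ should equal the same minimum additionally restricted to those $\mathbf v$ that satisfy $(\nabla\cdot\mathbf v,q)_{L^2(\Omega^{(k)})}=0$ for every $q\in Q^{(k)}\cap L^2_0(\Omega^{(k)})$, attained by the Stokes-harmonic extension $\mathbf v_A$. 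I would verify this by writing out the Schur complement in~\eqref{eq:sgammadef} and eliminating the interior velocity, the pressure and the multiplier $\mu_A$ from the local saddle-point system; the orthogonality constraint on $\nabla\cdot\mathbf v$ emerges directly from the rows $D_\Gamma\mathbf u_\Gamma+D_I\mathbf u_I+C_A^\top\mu_A=0$ together with $C_A p=0$, which effectively remove the patchwise mean from the admissible pressure test space. With this reading, the lower inequality $S_K^{(k)}\le S_A^{(k)}$ is immediate, because the Stokes-harmonic minimizer is a particular candidate in the unconstrained problem defining $S_K^{(k)}$.

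For the upper inequality, my idea is to construct an admissible competitor for the constrained minimization by correcting the divergence of $\mathbf v_K$ via the patchwise inf-sup condition~\eqref{eq:infsuplocal}. Let $P$ denote the $L^2(\Omega^{(k)})$-orthogonal projection onto $Q^{(k)}\cap L^2_0(\Omega^{(k)})$ and set $q:=P(\nabla\cdot\mathbf v_K)$. A componentwise Cauchy--Schwarz estimate gives $\|q\|_{L^2(\Omega^{(k)})}\le\|\nabla\cdot\mathbf v_K\|_{L^2(\Omega^{(k)})}\le\delta\,|\mathbf v_K|_{H^1(\Omega^{(k)})}$ with the same $\delta=\sqrt{2}$ as in~\eqref{eq:boundedness}, while the inf-sup condition supplies some $\mathbf w\in \mathbf V^{(k)}\cap[H^1_0(\Omega^{(k)})]^2$ with $P(\nabla\cdot\mathbf w)=q$ and $|\mathbf w|_{H^1(\Omega^{(k)})}\le\beta_k^{-1}\|q\|_{L^2(\Omega^{(k)})}\le(\delta/\beta_k)\,|\mathbf v_K|_{H^1(\Omega^{(k)})}$. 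By construction, $\mathbf v_A:=\mathbf v_K-\mathbf w$ has the correct trace on $\partial\Omega^{(k)}$ and satisfies $P(\nabla\cdot\mathbf v_A)=0$, so it is admissible for the constrained minimization characterizing $S_A^{(k)}$.

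The final step is a Pythagoras identity: since $\mathbf v_K$ is Poisson-harmonic and $\mathbf w$ vanishes on $\partial\Omega^{(k)}$, the cross term $(\nabla\mathbf v_K,\nabla\mathbf w)_{L^2(\Omega^{(k)})}$ vanishes, giving $|\mathbf v_A|_{H^1}^2 = |\mathbf v_K|_{H^1}^2 + |\mathbf w|_{H^1}^2 \le (1+\delta^2/\beta_k^2)\,|\mathbf v_K|_{H^1}^2 \le 3\delta^2\beta_k^{-2}\,|\mathbf v_K|_{H^1}^2$, where the final bound uses $\beta_k\le\delta$ (a direct consequence of testing the definition of $\beta_k$ against the divergence bound $\|\nabla\cdot\mathbf v\|_{L^2}\le\delta|\mathbf v|_{H^1}$). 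Combined with the variational characterizations of the two Schur complements, this yields the stated two-sided equivalence. The main subtlety I anticipate is being precise about the divergence constraint inherited from the saddle-point system: it is the \emph{weak} orthogonality $(\nabla\cdot\mathbf v,q)_{L^2(\Omega^{(k)})}=0$ for $q\in Q^{(k)}\cap L^2_0(\Omega^{(k)})$ rather than literal constancy of $\nabla\cdot\mathbf v$ (since $\nabla\cdot\mathbf V^{(k)}$ is typically not contained in $Q^{(k)}$), so that the inf-sup condition must be invoked exactly in that projected sense to supply the corrector $\mathbf w$.
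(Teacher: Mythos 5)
Your proof is correct, but it follows a genuinely different route from the paper's. The paper works entirely at the matrix level: it computes the inverse of the interior saddle-point block explicitly (using $D_\I^{(k)\top}\underline 1=0$ and $W^{-1}C_\A^{(k)\top}=\underline 1$ with $W=D_\I K_{\I\I}^{-1}D_\I^\top+C_\A^\top C_\A$), decomposes $S_A^{(k)}=S_K^{(k)}+S_D^{(k)}-S_E^{(k)}$, obtains the lower bound from $S_D^{(k)}\ge S_E^{(k)}$ via the Gauss rule with test function $q=1$, and the upper bound from $S_E^{(k)}\ge 0$ together with $S_D^{(k)}\le 2\delta^2\beta_k^{-2}S_K^{(k)}$, the latter coming from the sup-representation of Lemma~\ref{lem:supmat} and the local inf-sup estimate for $W$. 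You instead characterize $\underline{\mathbf w}^{(k)\top}S_A^{(k)}\underline{\mathbf w}^{(k)}$ as the energy of the extension minimizing $|\cdot|_{H^1(\Omega^{(k)})}$ subject to the weak divergence constraint against $Q^{(k)}\cap L^2_0(\Omega^{(k)})$, which makes the lower bound trivial by constraint inclusion and reduces the upper bound to exhibiting a divergence-corrected competitor plus a Pythagoras identity; this is cleaner, makes the role of $\beta_k$ transparent (it is the norm of the right inverse of the projected divergence), and even yields the slightly sharper constant $1+\delta^2/\beta_k^2\le 2\delta^2/\beta_k^2$. The one step you leave as a sketch, namely that the Schur complement quadratic form in~\eqref{eq:sgammadef} really equals that constrained minimum, is precisely what the paper's explicit elimination establishes; it is routine (the interior block solve is the KKT system of the constrained minimization, with the zero-mean pressure as multiplier and $\mu_\A$ absorbing the mean of $\nabla\cdot\mathbf v$, and substituting the optimality conditions shows the minimizer's energy equals $\underline{\mathbf w}^{(k)\top}S_A^{(k)}\underline{\mathbf w}^{(k)}$), but it must be carried out for the argument to be complete, as must the standard closed-range consequence of~\eqref{eq:infsuplocal} that supplies the corrector with $|\mathbf w|_{H^1(\Omega^{(k)})}\le\beta_k^{-1}\|q\|_{L^2(\Omega^{(k)})}$.
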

\begin{proof}
Recalling~\eqref{eq:cadef}, we have that
\[
		C_\A^{(k)} = |\Omega^{(k)}|^{-1}\; \underline 1^{\top} \kbox{M_p},
\]
where $\kbox{M_p}$ is the mass matrix that is obtained from discretizing $(\cdot,\cdot)_{L^2(\Omega^{(k)})}$ with the basis functions in the basis for $Q^{(k)}$ and
$\underline 1=(1,\cdots,1)^\top$ is a vector of ones of the corresponding size. Certainly, we have $\kbox{C_\A}\underline 1=1$.
We observe that the Gauss rule yields
\[
	 \kbox{\underline{\mathbf v}_\I}^\top \kbox{D_\I}^\top \underline 1
		=( \nabla \cdot \kbox{\mathbf v_\I},1)_{L^2(\Omega^{(k)})}
		=( \kbox{\mathbf v_\I}\cdot\kbox{\mathbf n},1)_{L^2(\partial \Omega^{(k)})}
		=0
\]
for all $\kbox{\mathbf v_\I}\in \kbox{\mathbf V_\I}$,
i.e., $\kbox{D_\I}^\top \underline 1 =0$.
Define $\kbox{W}:=\kbox{D_\I} \kbox{K_{\I\I}}^{-1} \kbox{D_\I}^\top+\kbox{C_\A}^\top \kbox{C_\A}$
and observe that $\kbox{W}^{-1}\kbox{C_\A}^{\top}=\underline 1$. Using these identities,
it is easily verified that
\begin{align*}
		&
			\begin{pmatrix}
		  \kbox{K_{\I\I}} &\kbox{D_{\I}}^\top & 0 \\
		  \kbox{D_{\I}}   &0                  & \kbox{C_\A}^\top \\
		  0               &\kbox{C_\A}         & 0     
	\end{pmatrix}^{-1}
		\\
		&=\begin{pmatrix}
				  \kbox{K_{\I\I}}^{-1} - \kbox{K_{\I\I}}^{-1} \kbox{D_\I}^\top \kbox{W}^{-1} \kbox{D_\I} \kbox{K_{\I\I}}^{-1}
				& \kbox{K_{\I\I}}^{-1} \kbox{D_\I}^\top \kbox{W}^{-1} 
				& 0 \\
				  \kbox{W}^{-1} \kbox{D_\I} \kbox{K_{\I\I}}^{-1} 
				& \underline 1\,\underline 1^\top-\kbox{W}^{-1}
				& \underline 1 \\
				  0
				& \underline 1^\top
				& 0
		\end{pmatrix}.
\end{align*}
Using the definition of $\kbox{S_A}$ and by expanding the products, we obtain
\begin{align*}
		\kbox{S_A} =\kbox{S_K}+\kbox{S_D}-\kbox{S_E},
\end{align*}
where 
$
		\kbox{S_K} =
		\kbox{K_{\Gamma\Gamma}}-
		\kbox{K_{\Gamma\I}}  
		\kbox{K_{\I\I}}^{-1}
		\kbox{K_{\I\Gamma}}
$
is as defined in~\eqref{eq:skdef} and
\begin{align*}
		\kbox{S_D} &:=
		(\kbox{K_{\Gamma\I}} \kbox{K_{\I\I}}^{-1} \kbox{D_\I}^\top - \kbox{D_\Gamma}^\top  )
		\kbox{W}^{-1}
		(\kbox{D_\I} \kbox{K_{\I\I}}^{-1} \kbox{K_{\I\Gamma}}  - \kbox{D_\Gamma}  ),\\
		\kbox{S_E} &:=
		\kbox{D_{\Gamma}}^\top \underline 1\, \underline 1^\top \kbox{D_\Gamma}
		.
\end{align*}
We first observe that
\begin{equation}\label{eq:skdesc}
\begin{aligned}
	\kbox{\underline{\mathbf w}}^\top
	\kbox{S_K}
	\kbox{\underline{\mathbf w}}
	&=
	\kbox{\underline{\mathbf w}}
	\begin{pmatrix}
		I & -   \kbox{K_{\Gamma\I}} \kbox{K_{\I\I}}^{-1}
	\end{pmatrix}
	\begin{pmatrix}
		\kbox{K_{\Gamma\Gamma}} & \kbox{K_{\Gamma\I}}  \\
		\kbox{K_{\I\Gamma}}     & \kbox{K_{\I\I}}
	\end{pmatrix}
	\begin{pmatrix}
		I \\ -  \kbox{K_{\I\I}}^{-1} \kbox{K_{\I\Gamma}} 
	\end{pmatrix}
	\kbox{\underline{\mathbf w}}
	\\&=
	| \mathcal H_k \kbox{\mathbf w} |_{H^1(\Omega^{(k)})}^2.
\end{aligned}
\end{equation}
Next, we observe that
\begin{align*}
		\kbox{S_D}
		& =
		\begin{pmatrix}
				I & - \kbox{K_{\Gamma\I}} \kbox{K_{\I\I}}^{-1} 
		\end{pmatrix}
		\begin{pmatrix}
				\kbox{D_\Gamma}^\top \\ \kbox{D_\I}^\top        
		\end{pmatrix}
		\kbox{W}^{-1}
		\begin{pmatrix}
				\kbox{D_\Gamma}  & \kbox{D_\I}       
		\end{pmatrix}
		\begin{pmatrix}
				\kbox{I} \\ -  \kbox{K_{\I\I}}^{-1} \kbox{K_{\I\Gamma}}
		\end{pmatrix}\\
		&= \mathcal H_k^\top \kbox{D}^\top \kbox{W}^{-1} \kbox{D} \mathcal H_k,
\end{align*}
where $	\kbox{D} := \begin{pmatrix}\kbox{D_\Gamma}        &  \kbox{D_\I} \end{pmatrix}$
is the overall divergence and $\mathcal H_k$ is the matrix representation of the discrete harmonic extension.
This means that we have using Lemma~\ref{lem:supmat}
\[
	\kbox{\underline{\mathbf w}}^\top \kbox{S_D} \kbox{\underline{\mathbf w}}^\top
	=
	\sup_{\kbox{q}\in Q^{(k)}}
	\frac{
		( \nabla\cdot (\mathcal H_k \kbox{\mathbf w}),  \kbox{q} )_{L^2(\Omega^{(k)})}^2,
		}{
			\| \kbox{\underline q}\|_{\kbox{W}}^2
		}.
\]
Using
\[
		\underline 1^\top \kbox{W} \underline 1
		=
		\sup_{\kbox{\mathbf v_\I}\in \kbox{\mathbf V_\I}}
		\underbrace{
		\frac{(\nabla \cdot \kbox{\mathbf v_\I},1)_{L^2(\Omega^{(k)})}^2}
		     {| \kbox{\mathbf v_\I} |_{H^1(\Omega^{(k)})}^2}
		}_{\displaystyle = 0}
		+
		|\Omega^{(k)}|^{-2}\left(\int_{\Omega^{(k)}} 1 \,\mathrm dx\right)^2
		=1,
\]
Using the Gauss rule and the choice $\kbox{q}=1$, we obtain
\[
\begin{aligned}
	&\kbox{\underline{\mathbf w}}^\top \kbox{S_D} \kbox{\underline{\mathbf w}}
	 \ge
	 ( \nabla\cdot (\mathcal H_k \kbox{\mathbf w}), 1 )_{L^2(\Omega^{(k)})}^2
	=
	(  (\mathcal H_k\kbox{\mathbf w}) \cdot \kbox{\textbf n},1)_{L^2(\partial\Omega^{(k)})}^2
	\\
	&\quad
	=
	(  \kbox{\mathbf w} \cdot \kbox{\textbf n},1)_{L^2(\partial\Omega^{(k)})}^2
	=
	(\nabla \cdot \kbox{\mathbf v_\Gamma},1)_{L^2(\Omega^{(k)})}^2
	= 
	\big( \underline 1^\top D_\Gamma \kbox{\underline{\mathbf v}_\Gamma}     \big)^2
	\\&\quad=
	\kbox{\underline{\mathbf v}_\Gamma}^\top \kbox{S_E} \kbox{\underline{\mathbf v}_\Gamma}=
	\kbox{\underline{\mathbf w}}^\top \kbox{S_E} \kbox{\underline{\mathbf w}},
\end{aligned}
\]
where $\kbox{\mathbf v_\Gamma}\in \kbox{\mathbf V_\Gamma}$ such that
$\kbox{\mathbf v_\Gamma}|_{\partial\Omega^{(k)}}=\kbox{\mathbf w}$
(and thus $\kbox{\underline{\mathbf v}_\Gamma}=\kbox{\underline{\mathbf w}}$).
This shows
\[
	\kbox{\underline{\mathbf w}}^\top \kbox{S_A} \kbox{\underline{\mathbf w}}
	=
	  \kbox{\underline{\mathbf w}}^\top
	  (\kbox{S_K} + \kbox{S_D}- \kbox{S_E})
	  \kbox{\underline{\mathbf w}}
	\ge
	  \kbox{\underline{\mathbf w}}^\top \kbox{S_K} \kbox{\underline{\mathbf w}}
\]
and thus the desired bound from below.

The definition of $\kbox{W}$, local inf-sup stability~\eqref{eq:infsuplocal} and $\beta_k \le \delta = \sqrt2$ give
for all $\kbox{p} \in Q^{(k)}$:
\begin{align*}
	\kbox{\underline p}^\top \kbox{W} \kbox{\underline p}
	&= 
	\sup_{\mathbf{v}_\I \in \mathbf{V}_\I^{(k)}}
			\frac{ (\nabla \cdot \mathbf{v}_\I, \kbox{p})_{L^2(\Omega^{(k)})}^2 }{ |\mathbf{v}_\I|_{H^1(\Omega^{(k)})}^2  }
	+
	|\Omega^{(k)}|^{-2}
	\left(
	\int_{\kbox{\Omega}} \kbox{p} \; \mathrm d x
	\right)^2
	\\&\ge
	\beta_{k}^2 \inf_{q\in\mathbb R} \|\kbox{p}-q\|_{L^2(\Omega^{(k)})}^2
	+
	|\Omega^{(k)}|^{-2}
	\left(
	\int_{\kbox{\Omega}} \kbox{p} \;  \mathrm d x
	\right)^2
	\\&\ge \min\{\beta_k^2,1\} \|\kbox{p}\|_{L^2(\Omega^{(k)})}^2
	\ge \frac{\beta_k^2}{2} \|\kbox{p}\|_{L^2(\Omega^{(k)})}^2
	.
\end{align*}
Using~\eqref{eq:boundedness} and~\eqref{eq:skdesc}, we further obtain
\begin{align*}
	\kbox{\underline{\mathbf w}}^\top \kbox{S_D} \kbox{\underline{\mathbf w}}
	&\le 2\beta_k^{-2}
	\sup_{\kbox{p}\in Q^{(k)}}
	\frac{
		( \nabla\cdot (\mathcal H_k \kbox{\mathbf w}),  \kbox{p} )_{L^2(\kbox{\Omega})}^2
		}{
			\|\kbox{p}\|_{L^2(\kbox{\Omega})}^2
		}
	\\&\le
	2\beta_k^{-2}\delta^2 | \mathcal H_k \kbox{\mathbf w} |_{H^1(\Omega^{(k)})}^2
	=
	2\beta_k^{-2}\delta^2
	\kbox{\underline{\mathbf w}}^\top \kbox{S_K} \kbox{\underline{\mathbf w}}
	.
\end{align*}
Using $\kbox{\underline{\mathbf w}}^\top \kbox{S_E} \kbox{\underline{\mathbf w}}\ge0$ and $1\le\beta_k^{-2}\delta^2$, we finally have 
\begin{align*}
	\kbox{\underline{\mathbf w}}^\top \kbox{S_A} \kbox{\underline{\mathbf w}}
	&=
	 \kbox{\underline{\mathbf w}}^\top (\kbox{S_K}+\kbox{S_D}-\kbox{S_E}) \kbox{\underline{\mathbf w}}
	\le
	3\beta_k^{-2}\delta^2
	\kbox{\underline{\mathbf w}}^\top \kbox{S_K} \kbox{\underline{\mathbf w}},
\end{align*}
which finishes the proof.
\end{proof}

Recalling
\[
		\bar F = \bar F_\Pi+\sum_{k=1}^K \bar F^{(k)},
\]
we estimate $\bar F^{(k)}$ and $\bar F_\Pi$ separately using Lemmas~\ref{lem:bar:f} and~\ref{lem:bar:fpi} below.
\begin{lemma}
		\label{lem:bar:f}
		For $k=1,\ldots,K$,
		the matrices $\kbox{\bar F}$ are symmetric and positive semidefinite and 
		satisfy
		\[
				\| \underline \lambda \|_{\kbox{\bar F}}
				=
				\sup_{\mathbf w^{(k)} \in \widetilde{\mathbf W}_\Delta^{(k)}}
				\frac{(\kbox{B}^\top \underline \lambda,\underline{\mathbf w}^{(k)})_{\ell^2}}
				{\|\underline{\mathbf w}^{(k)}\|_{S_A^{(k)}}}.
		\]
\end{lemma}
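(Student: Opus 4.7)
The plan is to reduce $\kbox{\bar F}$ by a partial Schur complement into a form to which Lemma~\ref{lem:supmat}~\eqref{eq:supmat1} applies directly. Since $\kbox{\bar B}=(\kbox{B},0,0,0,0)$ only couples to the $\underline{\mathbf u}_\Gamma$-block, I would eliminate the interior unknowns $(\underline{\mathbf u}_\I,\underline p,\kbox{\mu_\A})$ from the five-block saddle-point system $\kbox{\bar A}$. Comparing the resulting Schur complement in $\underline{\mathbf u}_\Gamma$ with the definition~\eqref{eq:sgammadef} of $\kbox{S_A}$ yields the equivalent representation
\[
\kbox{\bar F} =
\begin{pmatrix} \kbox{B} & 0 \end{pmatrix}
\begin{pmatrix} \kbox{S_A} & \kbox{C_\C}^\top \\ \kbox{C_\C} & 0 \end{pmatrix}^{-1}
\begin{pmatrix} \kbox{B}^\top \\ 0 \end{pmatrix},
\]
and symmetry of $\kbox{\bar F}$ is inherited from the (readily checked) symmetry of $\kbox{\bar A}$.

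Next, I would verify that $\kbox{S_A}$ is symmetric positive definite on the constraint space $\{\kbox{\underline{\mathbf w}}\,:\,\kbox{C_\C}\kbox{\underline{\mathbf w}}=0\}$. The lower bound $\kbox{\underline{\mathbf w}}^\top\kbox{S_A}\kbox{\underline{\mathbf w}}\ge|\mathcal H_k \kbox{\mathbf w}|_{H^1(\Omega^{(k)})}^2$ from Lemma~\ref{lem:h1like} shows that any element of the null space has a componentwise-constant harmonic extension; the primal corner conditions $\kbox{C_\C}\kbox{\underline{\mathbf w}}=0$ then force that constant to vanish. Moreover, this constraint space is precisely the coefficient representation of $\widetilde{\mathbf W}_\Delta^{(k)}$, since the primal degrees of freedom encoded in $\kbox{C_\C}$ are exactly the corner values and edge-wise normal-flux integrals that define $\widetilde{\mathbf W}_\Delta^{(k)}$.

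Finally, Lemma~\ref{lem:supmat}~\eqref{eq:supmat1} applied with $A=\kbox{S_A}$, $B=\kbox{B}$ and $C=\kbox{C_\C}$ delivers the claimed supremum representation, from which positive semidefiniteness of $\kbox{\bar F}$ is immediate. The main obstacle will be the algebraic bookkeeping of the Schur complement step: one has to verify the invertibility of the interior block coupling $\kbox{K_{\I\I}}$, $\kbox{D_\I}$ and $\kbox{C_\A}$, which follows from coercivity of $\kbox{K_{\I\I}}$, the local inf-sup condition~\eqref{eq:infsuplocal} on the interior velocity space, and the fact that $\kbox{C_\A}$ annihilates exactly the one-dimensional constant-pressure null space of $\kbox{D_\I}^\top$. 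A minor subtlety is that Lemma~\ref{lem:supmat} is stated for $A$ symmetric positive definite on the whole space, while $\kbox{S_A}$ is only positive semidefinite off $\mathrm{ker}\,\kbox{C_\C}$; the argument of that lemma however carries over verbatim since only values of $A$ on $\mathrm{ker}\,\kbox{C_\C}$ enter the supremum.
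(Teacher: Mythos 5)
Your proposal follows essentially the same route as the paper: both reduce $\kbox{\bar F}$ by block Gaussian elimination of the interior unknowns to the form $\begin{pmatrix}\kbox{B} & 0\end{pmatrix}\begin{pmatrix}\kbox{S_A} & \kbox{C_\C}^\top\\ \kbox{C_\C} & 0\end{pmatrix}^{-1}\begin{pmatrix}\kbox{B}^\top\\ 0\end{pmatrix}$, identify $\ker \kbox{C_\C}$ with $\widetilde{\mathbf W}_\Delta^{(k)}$, and invoke Lemma~\ref{lem:supmat}. Your additional care about the invertibility of the interior block and the definiteness of $\kbox{S_A}$ on $\ker\kbox{C_\C}$ is sound and in fact fills in details the paper leaves implicit.
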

\begin{proof}
	Using the definition of $\bar F^{(k)}$, the block structure of $\bar A^{(k)}$,
	the fact that we can reorder the entries in $\bar A^{(k)}$, block Gaussian elimination and~\eqref{eq:sgammadef}, we obtain
	\begin{align*}
			\kbox{\bar F}  &= 
				\kbox{\bar B} \kbox{\bar A}^{-1} \kbox{\bar B}^\top 
				\\
				&
				=
				\begin{pmatrix}
					\kbox{B}^\top \\ 0 \\ 0 \\0 \\ 0
				\end{pmatrix}^\top
				\begin{pmatrix}
					\kbox{K_{\Gamma\Gamma}} &\kbox{C_\C}^\top    &\kbox{K_{\Gamma\I}}  & \kbox{D_{\Gamma}}^\top &0\\
				  \kbox{C_\C}             &0                   &0                    & 0                      &0\\
				  \kbox{K_{\I\Gamma}}     &0                   &\kbox{K_{\I\I}}      & \kbox{D_{\I}}^\top     &0\\
				  \kbox{D_\Gamma}         &0                   &\kbox{D_\I}          & 0                      &\kbox{C_\A}^\top\\
				  0                       &0                   &0                    & \kbox{C_\A}             &0\\
				\end{pmatrix}^{-1}
				\begin{pmatrix}
					\kbox{B}^\top \\ 0 \\ 0 \\0 \\ 0
				\end{pmatrix}
				\\
				&=
				\begin{pmatrix}
					\kbox{B} & 0 
				\end{pmatrix}
				\begin{pmatrix}
					\kbox{S_A}      &\kbox{C_\C}^\top\\
				  \kbox{C_\C}     &0            
				\end{pmatrix}^{-1}
				\begin{pmatrix}
					\kbox{B}^\top \\ 0 
				\end{pmatrix}
				.
		\end{align*}
		Since
		$\kbox{\mathbf w} \in \widetilde{\mathbf W}_{\Delta}^{(k)}$ if and only if
		$\kbox{C_\C} \kbox{\underline {\mathbf w}}=0$, Lemma~\ref{lem:supmat} gives the desired
		representation. This representation
		shows that $\kbox{\bar F}$ must be (symmetric and) positive definite.
\end{proof}

Before we discuss $\bar F_\Pi$, we observe that the primal basis functions associated to
the averaging conditions are just the constant pressure functions, precisely the pressure component of the $k$-th basis function $\chi_{\Omega^{(k)}}$, i.e., $1$ on the patch $\Omega^{(k)}$ and $0$ on all other patches. The velocity component vanishes on all patches.

\begin{lemma}\label{lemma:abasis}
	We have $\kbox{\Psi_{\Gamma\A}}=0$, $\kbox{\Psi_{\I\A}}=0$ and $\kbox{\Psi_{p\A}}=\underline 1 \kbox{R_\A}$.
\end{lemma}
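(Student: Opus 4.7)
The plan is to verify directly that the proposed values solve the first block column of~\eqref{eq:basisdef}. Since $\bar A^{(k)}$ is non-singular, its solution is unique, so exhibiting consistent values for the two multiplier blocks $\Phi_{\C\A}^{(k)}$ and $\Phi_{\A\A}^{(k)}$ that close the remaining rows identifies the proposed $\Psi_\A^{(k)}$ as the correct one.

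Substituting $\Psi_{\Gamma\A}^{(k)}=0$, $\Psi_{\I\A}^{(k)}=0$, $\Psi_{p\A}^{(k)}=\underline 1\,\kbox{R_\A}$ into the five block rows of~\eqref{eq:basisdef}, four collapse to trivial identities. The $\kbox{C_\C}$-row gives $0=0$. The $\kbox{C_\A}$-row becomes $\kbox{C_\A}\,\underline 1\,\kbox{R_\A}=\kbox{R_\A}$, which holds because $\kbox{C_\A}$ is the patchwise pressure-average functional and the average of the constant function $1$ on $\Omega^{(k)}$ is $1$. The second block row reduces to $\kbox{D_\I}^\top\underline 1\,\kbox{R_\A}=0$, which follows from the identity $\kbox{D_\I}^\top\underline 1=0$ already established in the proof of Lemma~\ref{lem:h1like} by integration by parts using $\mathbf v_\I|_{\partial\Omega^{(k)}}=0$ for $\mathbf v_\I\in\mathbf V_\I^{(k)}$. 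Finally, the third block row reduces to $\kbox{C_\A}^\top\Phi_{\C\A}^{(k)}=0$, which is satisfied by $\Phi_{\C\A}^{(k)}=0$.

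The only nontrivial condition is the first block row, which after substitution reads
\[
	\kbox{C_\C}^\top \Phi_{\A\A}^{(k)} = -\kbox{D_\Gamma}^\top \underline 1 \,\kbox{R_\A}.
\]
The hard part is to recognize that the right-hand side lies in the range of $\kbox{C_\C}^\top$. By the divergence theorem together with $\mathbf v_\Gamma|_{\Gamma_D^{(k)}}=0$, I have for every $\mathbf v_\Gamma\in\mathbf V_\Gamma^{(k)}$
\[
	\underline{\mathbf v}_\Gamma^\top \kbox{D_\Gamma}^\top \underline 1
	= (\nabla\cdot \mathbf v_\Gamma,1)_{L^2(\Omega^{(k)})}
	= \sum_{\ell\in\mathcal N_\Gamma(k)} \int_{\Gamma^{(k,\ell)}} \mathbf v_\Gamma\cdot \mathbf n^{(k)}\,\mathrm d s.
\]
The right-hand side is exactly a sum of the edge-normal-integral primal constraint values evaluated by $\kbox{C_\C}$, so $\kbox{D_\Gamma}^\top \underline 1 = \kbox{C_\C}^\top \mu$ for the vector $\mu$ with a $1$ in each edge-integral row and a $0$ in each vertex-value row. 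Setting $\Phi_{\A\A}^{(k)} = -\mu\,\kbox{R_\A}$ therefore closes the first block row and completes the verification.
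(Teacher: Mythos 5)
Your proof is correct and takes essentially the same route as the paper: verify that the proposed $\kbox{\Psi_\A}$ together with explicit multipliers solves~\eqref{eq:basisdef} and invoke the non-singularity of $\kbox{\bar A}$, with the same key ingredients ($\kbox{D_\I}^\top\underline 1=0$ by the divergence theorem, $\kbox{C_\A}\underline 1=1$, and the fact that $\kbox{D_\Gamma}^\top\underline 1$ lies in the range of $\kbox{C_\C}^\top$ because the edge-flux integrals are primal degrees of freedom). The only differences are cosmetic: the paper reduces to the $k$-th column first and fixes the multiplier with the opposite sign convention.
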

\begin{proof}
	Since $\kbox{R_\A}=(0,\cdots,0,1,0,\cdots,0)$, where the non-zero coefficient is in
	the $k$-th column, we immediately obtain from the definition~\eqref{eq:basisdef} that
	all other columns of $\Psi_{\Gamma\A}^{(k)}$, $\Psi_{\I\A}^{(k)}$ and $\Psi_{p\A}^{(k)}$ vanish. Let
	$\underline\psi_{\Gamma\A}^{(k)}$, $\underline\psi_{\I\A}^{(k)}$ and $\underline\psi_{p\A}^{(k)}$ be the $k$-th
	column of these matrices. Then, we have
	\begin{equation}\label{eq:def:pressureprimals}
			\begin{pmatrix}
		  \kbox{K_{\Gamma\Gamma}}  &\kbox{K_{\Gamma\I}}  & \kbox{D_\Gamma}^\top &0              &\kbox{C_\C}^\top\\
		  \kbox{K_{\I\Gamma}}      &\kbox{K_{\I\I}}      & \kbox{D_\I}^\top    &0              &0\\
		  \kbox{D_\Gamma}          &\kbox{D_\I}          & 0             &\kbox{C_\A}^\top&0\\
		  0                        &0                    & \kbox{C_\A}    &0              &0\\
		  \kbox{C_\C}              &0                    &0              &0              &0\\
		\end{pmatrix}
			\begin{pmatrix}
				\kbox{\underline\psi_{\Gamma\A}}\\
				\kbox{\underline\psi_{\I\A}}\\
				\kbox{\underline\psi_{p\A}}\\
				\kbox{\rho}\\
				\kbox{\underline\mu}
			\end{pmatrix}
			=
			\begin{pmatrix}
				0\\
				0\\
				0\\
				1\\
				0
			\end{pmatrix}.
	\end{equation}
	Since the matrix $\kbox{\bar A}$ is non-singular, the system has a unique solution. 
	Choose $\kbox{\underline\psi_{\Gamma\A}}:=0$, $\kbox{\underline\psi_{\I\A}}:=0$, $\kbox{\underline\psi_{p\A}}:=\underline 1$,
	$\kbox{\rho}:=0$
	and $\kbox{\underline\mu}$ such that
	\[
			\kbox{\underline\mu}^\top \kbox{C_\C}\kbox{\underline{\mathbf v}_\Gamma} 
			=
			-
			\int_{\partial\Omega^{(k)}} \kbox{\mathbf v_\Gamma} \cdot \mathbf n^{(k)} \,\mathrm ds
			\qquad \foralls \kbox{\mathbf v_\Gamma}\in\kbox{\mathbf V_\Gamma},
	\]
	which is possible since $\kbox{C_\C}$ evaluates the primal degrees of freedom and
	the integrals of the normal components of the velocity variable on each edge are
	primal degrees of freedom. So, $\kbox{\underline\mu}$ is just 
	such that sum over the edges. Using the Gauss rule, we further have
	\begin{align*}
			\kbox{\underline\mu}^\top \kbox{C_\C}\kbox{\underline{\mathbf v}_\Gamma} 
			=
			-
			\int_{\Omega^{(k)}} \nabla \cdot \kbox{\mathbf v_\Gamma}  \,\mathrm dx
			=
			-
			\underline 1^\top \kbox{D_\Gamma} \kbox{\underline{\mathbf v}_\Gamma}
			\qquad \foralls \kbox{\mathbf v_\Gamma}\in \kbox{\mathbf V_\Gamma},
	\end{align*}
	which shows $\kbox{D_\Gamma}^\top \underline 1 + \kbox{C_\Gamma}^\top \kbox{\mu} = 0$. Analogously, we obtain $\kbox{D_\I}^\top \underline 1=0$.
	Since $\underline 1$ represents the constant function with value $1$
	and $\kbox{C_\A}$ evaluates the average, we have $\kbox{C_\A}\underline 1=1$.
	Using these results, it is easily verified that $(\kbox{\underline\psi_{\Gamma\A}},\kbox{\underline\psi_{\I\A}},
	\kbox{\underline\psi_{p\A}},\kbox{\rho}, \kbox{\underline\mu})$ as chosen
	solves~\eqref{eq:def:pressureprimals}. This finishes the proof.
\end{proof}

Analogously to Lemma~\ref{lem:bar:f}, we show that the operator $\bar F_\Pi$ corresponds to taking the maximum in $\widetilde{\mathbf W}_\Pi$. Before we
give a proof, we introduce some useful notation by collecting local
contributions to global matrices and vectors:
\begin{itemize}
		\item The matrix $B$ is a block row matrix containing the corresponding
		patch-local contributions, like $B:=\begin{pmatrix}
			B^{(1)} & \cdots & B^{(K)}
		\end{pmatrix}$.
		\item The matrices $\Psi_{\Gamma\C}$ and $\Psi_{\Gamma\A}$ are block
		column matrices containing the corresponding
		patch-local contributions.
		\item The matrices $S_A$, $S_K$, $D_\Gamma$ and $\scaling$ are block diagonal matrices containing
		the corresponding patch-local contributions.
		\item The vectors, like $\underline{\mathbf w}$, are the corresponding
		block vectors containing the patch-local contributions.
\end{itemize}
\begin{lemma}
	\label{lem:bar:fpi}
	The matrix $\bar F_\Pi$ is symmetric and positive semidefinite and 
	satisfies
		\[
				\| \underline \lambda \|_{\bar F_\Pi}
				=
				\sup_{\mathbf w \in \widetilde{\mathbf W}_\Pi}
				\frac{(B^\top \underline \lambda,\underline{\mathbf w})_{\ell^2}}
				{\|\underline{\mathbf w}\|_{S_A}}.
		\]
\end{lemma}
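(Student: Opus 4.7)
The proof parallels that of Lemma~\ref{lem:bar:f}, with the primal problem replacing the patch-local one. The plan is to express $\bar F_\Pi$ as a supremum via Lemma~\ref{lem:supmat}, and then to reinterpret this supremum over the function space $\widetilde{\mathbf W}_\Pi$.

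First I would apply Lemma~\ref{lem:supmat}\eqref{eq:supmat1} to $\bar F_\Pi = \bar B_\Pi \bar A_\Pi^{-1} \bar B_\Pi^\top$, identifying $A := A_\Pi$, $B := B_\Pi$ and $C := C_\Pi$. This immediately gives
\[
\|\underline\lambda\|_{\bar F_\Pi} = \sup_{\underline w_\Pi\,:\, C_\Pi \underline w_\Pi = 0} \frac{(B_\Pi \underline w_\Pi, \underline\lambda)_{\ell^2}}{\|\underline w_\Pi\|_{A_\Pi}}.
\]
Next I would exhibit a bijection between the set of such primal coefficient vectors and $\widetilde{\mathbf W}_\Pi$ via the rule $\underline{\mathbf w}^{(k)} := \Psi_\Gamma^{(k)} \underline w_\Pi$, which reduces by Lemma~\ref{lemma:abasis} to $\Psi_{\Gamma\C}^{(k)} \underline w_\C$. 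The image satisfies primal continuity across patches by construction of the primal basis; the $S_A$-orthogonality to $\widetilde{\mathbf W}_\Delta$ follows from the saddle-point-harmonic structure of $\Psi^{(k)}$, i.e., $A^{(k)}$-orthogonality to functions with vanishing primal dofs, which descends to the Schur complement $S_A^{(k)}$; and the inflow-equals-outflow condition $\int_{\partial\Omega^{(k)}} \mathbf w^{(k)} \cdot \mathbf n^{(k)}\,\mathrm ds = 0$ is verified by integration by parts, using row~3 of~\eqref{eq:basisdef} combined with $D_\I^{(k)\top} \underline 1 = 0$ (as established in the proof of Lemma~\ref{lem:h1like}) and the constraint $C_\Pi \underline w_\Pi = 0$.

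Under this bijection the numerator matches easily: since $B^{(k)}$ acts only on the velocity skeleton component, $B_\Pi \underline w_\Pi = \sum_k B^{(k)} \underline{\mathbf w}^{(k)} = B \underline{\mathbf w}$, so $(B_\Pi \underline w_\Pi, \underline\lambda)_{\ell^2} = (B^\top \underline\lambda, \underline{\mathbf w})_{\ell^2}$. For the denominator I would expand $\|\underline w_\Pi\|_{A_\Pi}^2 = \sum_k (\Psi^{(k)} \underline w_\Pi)^\top A^{(k)} (\Psi^{(k)} \underline w_\Pi)$ and use the defining saddle point equations~\eqref{eq:basisdef} together with the Schur complement identity to rewrite each summand as $\underline{\mathbf w}^{(k)\top} S_A^{(k)} \underline{\mathbf w}^{(k)}$, recovering the definition~\eqref{eq:sgammadef} of $S_A^{(k)}$. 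Summing over $k$ yields $\|\underline{\mathbf w}\|_{S_A}^2$. Finally, symmetry of $\bar F_\Pi$ is immediate from that of $\bar A_\Pi$, and positive semidefiniteness then follows from the resulting supremum representation.

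The main technical obstacle I anticipate is precisely the denominator match: it requires careful bookkeeping of the cross terms between the pressure-averaging ($\A$-type) and the velocity-continuity ($\C$-type) primal basis functions. The $\A$-type functions contribute $\underline 1\, w_\A^{(k)}$ to the pressure on patch $k$ (Lemma~\ref{lemma:abasis}), and the resulting extra terms in $(\Psi^{(k)} \underline w_\Pi)^\top A^{(k)} (\Psi^{(k)} \underline w_\Pi)$ must be shown to be absorbed by the inflow-equals-outflow condition, which itself arises from $C_\Pi \underline w_\Pi = 0$. In spirit this parallels the use of $D_\I^{(k)\top} \underline 1 = 0$ and of the identity $\underline 1^\top D_\Gamma^{(k)} \Psi_\Gamma^{(k)} = -\Phi_\A^{(k)}$ that appear in the proof of Lemma~\ref{lem:h1like}.
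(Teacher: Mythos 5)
There is a genuine gap at the very first step. You invoke Lemma~\ref{lem:supmat}\eqref{eq:supmat1} with $A:=A_\Pi$, but that statement requires $A$ to be symmetric \emph{positive definite}, and $A_\Pi=\sum_k\kbox{\Psi}^\top\kbox{A}\kbox{\Psi}$ is not: by Lemma~\ref{lemma:abasis} the averaging primal basis functions have vanishing velocity components and constant pressure components, so $\kbox{\Psi_\A}^\top\kbox{A}\kbox{\Psi_\A}=0$ while the cross block $\kbox{\Psi_\A}^\top\kbox{A}\kbox{\Psi_\C}=\kbox{R_\A}^\top\underline 1^\top\kbox{D_\Gamma}\kbox{\Psi_{\Gamma\C}}$ is in general nonzero. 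Hence $\|\cdot\|_{A_\Pi}$ is not a norm and your supremum over $\{\underline w_\Pi : C_\Pi\underline w_\Pi=0\}$ is ill-defined; concretely, a pure averaging direction $\underline w_\Pi=(0,\underline w_\A)$ with $C_\Pi\underline w_\Pi=0$ gives $B_\Pi\underline w_\Pi=0$ (since $\kbox{\Psi_{\Gamma\A}}=0$) and $\|\underline w_\Pi\|_{A_\Pi}=0$, i.e.\ a $0/0$ quotient. The ``careful bookkeeping of cross terms'' you flag as an anticipated obstacle is therefore not a bookkeeping issue that can be absorbed afterwards: it is the reason the two-block formula cannot be applied at all.

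The paper resolves this by splitting $\kbox{\Psi}=(\kbox{\Psi_\C}\;\kbox{\Psi_\A})$, computing the blocks explicitly ($\kbox{\Psi_\C}^\top\kbox{A}\kbox{\Psi_\C}=\kbox{\Psi_{\Gamma\C}}^\top\kbox{S_A}\kbox{\Psi_{\Gamma\C}}$ via~\eqref{eq:cbasisharm}, the zero $\A\A$-block, and the coupling matrix $Z$), and then recasting $\bar A_\Pi$ in the three-block saddle-point form to which the purpose-built statement~\eqref{eq:supmat2} applies, with the positive definite part being $\Psi_{\Gamma\C}^\top S_A\Psi_{\Gamma\C}$, the averaging coefficients playing the role of the multiplier $C$, and $C_\Pi$ the role of $D$. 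The resulting admissible set is not $\{C_\Pi\underline w_\Pi=0\}$ but $W_2=\{\underline{\mathbf w}_\C : C_\Pi\underline\mu=0\Rightarrow\underline\mu^\top Z^\top\Psi_{\Gamma\C}\underline{\mathbf w}_\C=0\}$, which, combined with the global zero-flux identity, is exactly what produces the patchwise conditions $\int_{\partial\Omega^{(k)}}\mathbf w^{(k)}\cdot\mathbf n^{(k)}\,\mathrm ds=0$ characterizing $\widetilde{\mathbf W}_\Pi$. Your remaining steps (matching the numerator, the $S_A$-orthogonality to $\widetilde{\mathbf W}_\Delta$, and the use of $\kbox{D_\I}^\top\underline 1=0$) are in the right spirit, but the argument needs to be rebuilt on~\eqref{eq:supmat2} rather than~\eqref{eq:supmat1}.
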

\begin{proof}
	Recall that
	\begin{align*}
			\bar F_\Pi
			&= 
			\bar B_{\Pi} \bar A_{\Pi}^{-1} \bar B_{\Pi}^\top
			=
			\begin{pmatrix}
			B_{\Pi} & 0
			\end{pmatrix}
			\begin{pmatrix}
			  A_\Pi &C_\Pi^\top\\
  			  C_\Pi & 0
			\end{pmatrix}^{-1}
			\begin{pmatrix}
			B_{\Pi}^\top \\ 0
			\end{pmatrix}
	\end{align*}
	with
	\[
			A_\Pi = \sum_{k=1}^K \kbox{\Psi}^\top \kbox{A} \kbox{\Psi}
			\quad\text{and}\quad
			B_\Pi = \sum_{k=1}^K \kbox{B} \kbox{\Psi}.
	\]
	We decompose the basis for the primal space into basis functions corresponding to the continuity conditions ($\kbox{\Psi_\C}$) and basis functions corresponding to the averaging conditions ($\kbox{\Psi_\A}$):
	\[
		\kbox{\Psi}
		=
		\begin{pmatrix}
				\kbox{\Psi_\C}&\kbox{\Psi_\A}
		\end{pmatrix},
		\quad\mbox{where}\quad
		\kbox{\Psi_\C}:=\begin{pmatrix}\kbox{\Psi_{\Gamma\C}}\\\kbox{\Psi_{\I\C}}\\\kbox{\Psi_{p\C}}\end{pmatrix}
		\quad\mbox{and}\quad 
		\kbox{\Psi_\A}:=\begin{pmatrix}\kbox{\Psi_{\Gamma\A}}\\\kbox{\Psi_{\I\A}}\\\kbox{\Psi_{p\A}}\end{pmatrix}.
	\]
	From~\eqref{eq:basisdef}, we have
	\begin{equation}\label{eq:cbasisharm}
			\begin{pmatrix}
				\kbox{\Psi_{\I\C}}\\
				\kbox{\Psi_{p\C}}\\
				\rho_\C
			\end{pmatrix}
			=
			-
			\begin{pmatrix}
				  \kbox{K_{\I\I}}      & \kbox{D_{\I}}^\top    &0\\
				  \kbox{D_\I}          & 0  & \kbox{C_\A}^\top \\
				  0                    & \kbox{C_\A}    &0
			\end{pmatrix}^{-1}
			\begin{pmatrix}
				\kbox{K_{\I\Gamma}}  \\ \kbox{D_{\Gamma}} \\ 0
			\end{pmatrix}
			\kbox{\Psi_{\Gamma\C}}.
	\end{equation}
	Using $\kbox{C_\A}\kbox{\Psi_{p\C}}=0$, \eqref{eq:cbasisharm} and~\eqref{eq:sgammadef}, we have
	\begin{equation}\label{eq:apiv:spd}
	\begin{aligned}
			\kbox{\Psi_\C}^\top \kbox{A} \kbox{\Psi_\C}
			&=
			\begin{pmatrix}
				\kbox{\Psi_{\Gamma\C}}\\
				\kbox{\Psi_{\I\C}}\\
				\kbox{\Psi_{p\C}}
			\end{pmatrix}^\top
			\begin{pmatrix}
				  \kbox{K_{\Gamma\Gamma}} &\kbox{K_{\Gamma\I}}  & \kbox{D_{\Gamma}}^\top\\
				  \kbox{K_{\I\Gamma}}     &\kbox{K_{\I\I}}      & \kbox{D_{\I}}^\top    \\
				  \kbox{D_\Gamma}         &\kbox{D_\I}          & 0              \\
			\end{pmatrix}		
			\begin{pmatrix}
				\kbox{\Psi_{\Gamma\C}}\\
				\kbox{\Psi_{\I\C}}\\
				\kbox{\Psi_{p\C}}
			\end{pmatrix}
			\\
			&=
			\begin{pmatrix}
				\kbox{\Psi_{\Gamma\C}}\\
				\kbox{\Psi_{\I\C}}\\
				\kbox{\Psi_{p\C}}\\
				\rho_\C
			\end{pmatrix}^\top
			\begin{pmatrix}
				  \kbox{K_{\Gamma\Gamma}} &\kbox{K_{\Gamma\I}}  & \kbox{D_{\Gamma}}^\top&0\\
				  \kbox{K_{\I\Gamma}}     &\kbox{K_{\I\I}}      & \kbox{D_{\I}}^\top    &0\\
				  \kbox{D_\Gamma}         &\kbox{D_\I}          & 0  & \kbox{C_\A}^\top \\
				  0                       &0                    & \kbox{C_\A}    &0
			\end{pmatrix}		
			\begin{pmatrix}
				\kbox{\Psi_{\Gamma\C}}\\
				\kbox{\Psi_{\I\C}}\\
				\kbox{\Psi_{p\C}}\\
				\rho_\C
			\end{pmatrix}
			\\&=
			\kbox{\Psi_{\Gamma\C}}^\top \kbox{S_A} \kbox{\Psi_{\Gamma\C}} \ge 0.
	\end{aligned}
	\end{equation}
	Using Lemma~\ref{lemma:abasis}, we have
	\begin{equation}\label{eq:apip:spd}
			\kbox{\Psi_\A}^\top \kbox{A} \kbox{\Psi_\A}
			=
			\begin{pmatrix}
				0\\
				0\\
				\underline 1 \kbox{R_\A}
			\end{pmatrix}^\top
			\begin{pmatrix}
				  \kbox{K_{\Gamma\Gamma}} &\kbox{K_{\Gamma\I}}  & \kbox{D_{\Gamma}}^\top\\
				  \kbox{K_{\I\Gamma}}     &\kbox{K_{\I\I}}      & \kbox{D_{\I}}^\top    \\
				  \kbox{D_\Gamma}         &\kbox{D_\I}          & 0              \\
			\end{pmatrix}
			\begin{pmatrix}
				0\\
				0\\
				\underline 1 \kbox{R_\A}
			\end{pmatrix}
			=
			0
	\end{equation}
	and
	\begin{equation}\label{eq:amixed}
	\begin{aligned}
			\kbox{\Psi_\A}^\top \kbox{A} \kbox{\Psi_\C}
			&=
			\begin{pmatrix}
				0\\
				0\\
				\underline 1 \kbox{R_\A}
			\end{pmatrix}^\top
			\begin{pmatrix}
				  \kbox{K_{\Gamma\Gamma}} &\kbox{K_{\Gamma\I}}  & \kbox{D_{\Gamma}}^\top\\
				  \kbox{K_{\I\Gamma}}     &\kbox{K_{\I\I}}      & \kbox{D_{\I}}^\top    \\
				  \kbox{D_\Gamma}         &\kbox{D_\I}          & 0              \\
			\end{pmatrix}
			\begin{pmatrix}
				\kbox{\Psi_{\Gamma\C}}\\
				\kbox{\Psi_{\I\C}}\\
				\kbox{\Psi_{p\C}}
			\end{pmatrix}
			\\&=
			\kbox{R_\A}^\top
			\underline 1^\top \kbox{D_\Gamma} \kbox{\Psi_{\Gamma\C}}
	\end{aligned}
	\end{equation}	
	since $\underline 1^\top \kbox{D_\I} \kbox{\Psi_{\I\C}}=0$.
	Using \eqref{eq:apiv:spd}, \eqref{eq:apip:spd} and \eqref{eq:amixed}, we obtain
	\begin{align*}
		\bar F_\Pi
		& =
		\begin{pmatrix}
			B_{\Pi} & 0
			\end{pmatrix}
			\begin{pmatrix}
			  A_\Pi &C_\Pi^\top\\
  			C_\Pi & 0
			\end{pmatrix}^{-1}
			\begin{pmatrix}
			B_{\Pi}^\top \\ 0
			\end{pmatrix}
		\\
		& =
			\begin{pmatrix}
			B \Psi_{\Gamma\C} & 0 & 0
			\end{pmatrix}
			\begin{pmatrix}
			    \Psi_{\Gamma\C}^\top S_A \Psi_{\Gamma\C} 
				& \Psi_{\Gamma\C}^\top Z 
				& 0 \\		
			   Z^\top \Psi_{\Gamma\C} 
				& 0
				& C_\Pi^\top \\
				0
				& C_\Pi
				& 0
			\end{pmatrix}^{-1}
			\begin{pmatrix}
			\Psi_{\Gamma\C}^\top B^\top \\ 0 \\ 0
			\end{pmatrix},
	\end{align*}
	where $Z$ is a block-diagonal matrix containing
	$\kkbox{D_\Gamma}{1}^\top\underline 1, \ldots, \kkbox{D_\Gamma}{K}^\top\underline 1$.
	Using Lemma~\ref{lem:supmat}, we have
	\[
		\|\underline \lambda\|_{\bar F_\Pi}
		=
		\sup_{\underline{\mathbf w}_{\C} \in \underline{\mathbf W_{\C}}}
		\frac{ (B^\top \underline \lambda, \Psi_{\Gamma\C} \underline{\mathbf w}_{\C})_{\ell^2} }
		{ \|\Psi_{\Gamma\C} \underline{\mathbf w}_{\C}\|_{S_A}},
	\]
	where $\underline{\mathbf W}_{\C}:=\{ \underline{\mathbf w}_{\C} :
	C_\Pi \underline \mu = 0 \Rightarrow 
	\underline\mu^\top Z^\top \Psi_{\Gamma\C}\underline{\mathbf w}_{\C}=0\}$. Define
	$\underline {\mathbf w}:=\Psi_{\Gamma\C} \underline{\mathbf w}_{\C}$  and let
	$\mathbf w=(\mathbf w^{(1)},\cdots,\mathbf w^{(K)})\in \widetilde{\mathbf W}$ be the
	function associated to the coefficient vector $\underline{\mathbf w}$.
	Observe that the condition
	\[
			C_\Pi \underline \mu =0 \Rightarrow 
					\underline\mu^\top Z^\top \underline{\mathbf w}
					=\sum_{k=1}^K \mu_k \underline 1^\top \kbox{D_{\Gamma}}
					\kbox{\underline{\mathbf w}}
					=0
	\]
	translates to
	\[
			\sum_{k=1}^K \mu_k = 0
			\Rightarrow
			\sum_{k=1}^K
			\mu_k \int_{\partial\Omega^{(k)}} \kbox{\mathbf w} \cdot \kbox{\mathbf n} \, \mathrm d s = 0.
	\]
	Since functions in $\widetilde{\mathbf W}$ satisfy homogeneous Dirichlet boundary conditions and since they satisfy a continuity condition on the averages of the normal components of the velocity,
	 we also have
	\[
			0 = 
			\int_{\partial\Omega} \mathbf w \cdot \mathbf n \, \mathrm d s
			= \sum_{k=1}^K
			\int_{\partial\Omega^{(k)}} \mathbf w^{(k)} \cdot \mathbf n^{(k)} \, \mathrm d s.
	\]
	By combining these results, we obtain that
	\[
			\int_{\partial\Omega^{(k)}} \kbox{\mathbf w} \cdot \kbox{\mathbf n} \, \mathrm d s = 0
			\qquad\foralls k=1,\ldots,K.
	\]
	Since $\mathbf w$ is in the image space of the primal basis
	functions, we also know that $\mathbf w$ satisfies the remaining conditions in the
	definition of $\widetilde{\mathbf W}$ and that it is orthogonal to
	$\widetilde{\mathbf W}_\Delta$. This shows $\mathbf w\in \widetilde{\mathbf W}_\Pi$.
	The reverse direction, i.e., that for each $\mathbf w\in \widetilde{\mathbf W}_\Pi$,
	there is some $\underline{\mathbf w}_{\C} \in \underline{\mathbf W}_{\C}$ with $\underline{\mathbf w}:=\Psi_{\Gamma\C} \underline{\mathbf w}_{\C}$
	is straight forward. So, we obtain
	\[
		\|\underline \lambda\|_{\bar F_\Pi}
		=
		\sup_{\mathbf w \in \widetilde{\mathbf W}_\Pi}
		\frac{ (B^\top \underline \lambda, \underline{\mathbf w})_{\ell^2} }
		{ \|\underline{\mathbf w}\|_{S_A}},
	\]
	which is what we wanted to show. This representation immediately shows that the symmetric matrix
	$\bar F_\Pi$ is positive semidefinite.
\end{proof}

From the Lemmas~\ref{lem:bar:f} and~\ref{lem:bar:fpi}, we immediately obtain that
the matrix $\bar F$ is symmetric positive semidefinite and that the following result holds.
\begin{lemma}
The identity
\begin{equation}\label{eq:barF}
		\| \underline \lambda \|_{\bar F}
		=
		\sup_{\mathbf w \in \widetilde{\mathbf W}}
		\frac{(B^\top \underline \lambda,\underline{\mathbf w})_{\ell^2}}{\|\underline{\mathbf w}\|_{S_A}}
\end{equation}
holds for all $\underline\lambda$.
\end{lemma}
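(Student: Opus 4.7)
The plan is to combine Lemmas~\ref{lem:bar:f} and~\ref{lem:bar:fpi} via the defining $S_A$-orthogonal decomposition $\widetilde{\mathbf W} = \widetilde{\mathbf W}_\Delta \oplus \widetilde{\mathbf W}_\Pi$ and the product structure $\widetilde{\mathbf W}_\Delta = \widetilde{\mathbf W}_\Delta^{(1)} \times \cdots \times \widetilde{\mathbf W}_\Delta^{(K)}$. Starting from $\bar F = \bar F_\Pi + \sum_{k=1}^K \bar F^{(k)}$, the identity
\[
   \|\underline\lambda\|_{\bar F}^2 = \|\underline\lambda\|_{\bar F_\Pi}^2 + \sum_{k=1}^K \|\underline\lambda\|_{\bar F^{(k)}}^2
\]
reduces the task to rewriting the right-hand side as the square of the supremum in~\eqref{eq:barF}.

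First I would aggregate the $K$ patch-local identities of Lemma~\ref{lem:bar:f} into a single supremum over $\widetilde{\mathbf W}_\Delta$. Since $S_A$ is block-diagonal with blocks $S_A^{(k)}$, $B$ is the block row $(B^{(1)},\ldots,B^{(K)})$, and the constraints defining $\widetilde{\mathbf W}_\Delta^{(k)}$ decouple across patches, the elementary fact that the dual norm on an orthogonal product of Hilbert spaces is the $\ell^2$-combination of the dual norms of the factors yields
\[
   \sum_{k=1}^K \|\underline\lambda\|_{\bar F^{(k)}}^2 = \sup_{\mathbf w \in \widetilde{\mathbf W}_\Delta} \frac{(B^\top \underline\lambda, \underline{\mathbf w})_{\ell^2}^2}{\|\underline{\mathbf w}\|_{S_A}^2}.
\]
To put this on the same footing as Lemma~\ref{lem:bar:fpi} one checks the embedding $\widetilde{\mathbf W}_\Delta \subset \widetilde{\mathbf W}$, which is immediate: vanishing corner values and vanishing edge integrals of $\mathbf w^{(k)}\cdot\mathbf n^{(k)}$ trivially imply both the interpatch continuity conditions and the per-patch balance $\int_{\partial\Omega^{(k)}} \mathbf w^{(k)}\cdot\mathbf n^{(k)}\,\mathrm ds = 0$ appearing in the definition of $\widetilde{\mathbf W}$.

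The last step is to combine the two suprema into one over $\widetilde{\mathbf W}$. For any $\underline{\mathbf w} \in \widetilde{\mathbf W}$, write $\underline{\mathbf w} = \underline{\mathbf w}_\Delta + \underline{\mathbf w}_\Pi$ with $\underline{\mathbf w}_\Delta \in \widetilde{\mathbf W}_\Delta$ and $\underline{\mathbf w}_\Pi \in \widetilde{\mathbf W}_\Pi$; by $S_A$-orthogonality the squared norm splits, while the linear functional $(B^\top \underline\lambda, \cdot)_{\ell^2}$ splits additively. A Cauchy--Schwarz step in $\mathbb R^2$ applied to the two summands bounds the supremum over $\widetilde{\mathbf W}$ by the sum of the two suprema, and the reverse inequality follows by taking $\underline{\mathbf w}_\Delta$ and $\underline{\mathbf w}_\Pi$ as the two respective maximizers, rescaled so that the Cauchy--Schwarz inequality is sharp. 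Taking square roots yields~\eqref{eq:barF}. I expect no significant obstacle: the argument is essentially assembly of the pieces supplied by the preceding two lemmas, with the only mild subtlety being the embedding $\widetilde{\mathbf W}_\Delta \subset \widetilde{\mathbf W}$ addressed above.
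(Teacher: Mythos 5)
Your proposal is correct and follows essentially the same route as the paper: both start from $\|\underline\lambda\|_{\bar F}^2=\|\underline\lambda\|_{\bar F_\Pi}^2+\sum_k\|\underline\lambda\|_{\bar F^{(k)}}^2$, use the $S_A$-orthogonal decomposition $\widetilde{\mathbf W}=\widetilde{\mathbf W}_\Delta\oplus\widetilde{\mathbf W}_\Pi$ with Cauchy--Schwarz for the upper bound and sharpness of Cauchy--Schwarz for the reverse inequality. The only difference is organizational (you aggregate the $K$ local suprema into one over $\widetilde{\mathbf W}_\Delta$ first, whereas the paper applies a single Cauchy--Schwarz step to all $K+1$ summands at once), which is immaterial.
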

\begin{proof}
Using orthogonality, the Cauchy-Schwarz inequality and Lemmas~\ref{lem:bar:f} and~\ref{lem:bar:fpi}, we have
\begin{align*}
	&\sup_{\mathbf w \in \widetilde{\mathbf W}}
		\frac{(B^\top  \underline \lambda,\underline{\mathbf w})_{\ell^2}^2}{\|\underline{\mathbf w}\|_{S_A}^2}
	=
	\sup_{\sum_k \mathbf w_\Delta^{(k)} + \mathbf w_\Pi \in \widetilde{\mathbf W}}
	\frac{( \underline \lambda,\sum_{k=1}^K  \kbox{B}\underline{\mathbf w}_\Delta^{(k)} + B \underline{\mathbf w}_\Pi )_{\ell^2}^2}
	{\sum_{k=1}^K \|\underline{\mathbf w}_\Delta^{(k)}\|_{\kbox{S_A}}^2+
	\|\underline{\mathbf w}_\Pi\|_{S_A}^2}
	\\&\quad =
	\sup_{\sum_k \mathbf w_\Delta^{(k)} + \mathbf w_\Pi \in \widetilde{\mathbf W}}
	\frac{\left(
	\sum_{k=1}^K \frac{( \underline \lambda, \kbox{B } \underline{\mathbf w}_\Delta^{(k)} )_{\ell^2}}{\|\underline{\mathbf w}_\Delta^{(k)}\|_{\kbox{S_A}}} \|\underline{\mathbf w}_\Delta^{(k)}\|_{\kbox{S_A}}
	+
	\frac{( \underline \lambda, B \underline{\mathbf w}_\Pi )_{\ell^2}}{\|\underline{\mathbf w}_\Pi\|_{S_A}} \|\underline{\mathbf w}_\Pi\|_{S_A}
	\right)^2}
	{\sum_{k=1}^K \|\underline{\mathbf w}_\Delta^{(k)}\|_{\kbox{S_A}}^2+
	\|\underline{\mathbf w}_\Pi\|_{S_A}^2}
	\\&\quad\le
	\sup_{\sum_k \mathbf w_\Delta^{(k)} + \mathbf w_\Pi \in \widetilde{\mathbf W}}
	\left(
	\sum_{k=1}^K 
	\frac{( \underline \lambda, \kbox{B } \underline{\mathbf w}_\Delta^{(k)} )_{\ell^2}^2}
	{\|\underline{\mathbf w}_\Delta^{(k)}\|_{\kbox{S_A}}^2}
	+
	\frac{( \underline \lambda, B \underline{\mathbf w}_\Pi )_{\ell^2}^2}
	{\|\underline{\mathbf w}_\Pi\|_{S_A}^2}
	\right)
	=
	\|\underline{\lambda}\|_{\bar F}^2
	.
\end{align*}
The Cauchy-Schwarz inequality is satisfied with equality if the corresponding terms are equal, i.e.,
\[
		\frac{( \underline \lambda, \kbox{B} \underline{\mathbf w}_\Delta^{(k)} )_{\ell^2}}
	{\|\underline{\mathbf w}_\Delta^{(k)}\|_{\kbox{S}}}
	= \|\underline{\mathbf w}_\Delta^{(k)}\|_{\kbox{S}}
	\quad\mbox{and}\quad
	\frac{( \underline \lambda, B \underline{\mathbf w}_\Pi )_{\ell^2}}
	{\|\underline{\mathbf w}_\Pi\|_{S_A}}
	=
	\|\underline{\mathbf w}_\Pi\|_{S_A}.
\]
Let $\mathbf W^*\subset \widetilde{\mathbf W}$ be the subset of functions that satisfy these
conditions. Due to scaling invariance and the fact that the Cauchy-Schwarz inequality is
satisfied with equality and $\mathbf W^*\subset \widetilde{\mathbf W}$, we have
\begin{align*}
	\|\underline{\lambda}\|_{\bar F}^2
	&=
	\sup_{\sum_k \mathbf w_\Delta^{(k)} + \mathbf w_\Pi \in \mathbf W^*}
	\left(
	\sum_{k=1}^K 
	\frac{( \underline \lambda, \kbox{B} \underline{\mathbf w}_\Delta^{(k)} )_{\ell^2}^2}
	{\|\underline{\mathbf w}_\Delta^{(k)}\|_{\kbox{S_A}}^2}
	+
	\frac{( \underline \lambda, B \underline{\mathbf w}_\Pi)_{\ell^2}^2}
	{\|\underline{\mathbf w}_\Pi\|_{S_A}^2}
	\right)
	\\&=
	\sup_{\mathbf w \in \mathbf W^*}
		\frac{(B^\top  \underline \lambda,\underline{\mathbf w})_{\ell^2}^2}{\|\underline{\mathbf w}\|_{S_A}^2}
	\le
	\sup_{\mathbf w \in \widetilde{\mathbf W}}
		\frac{(B^\top  \underline \lambda,\underline{\mathbf w})_{\ell^2}^2}{\|\underline{\mathbf w}\|_{S_A}^2},
\end{align*}
which finishes the proof.
\end{proof}

\begin{lemma}\label{lem:BBt}
	Let $\underline{\mathbf w} = \mathcal D^{-1} B^\top B \underline{\mathbf v}$, where 
	$\underline{\mathbf w}$ and $\underline{\mathbf v}$ are the coefficient representations of
	$\mathbf w\in {\mathbf W}$ and $\mathbf v\in \widetilde{\mathbf W}$, respectively.
	Then, we have
	\[
			\mathbf w^{(k)}|_{\Gamma^{(k,\ell)}}
			=
			\mathbf v^{(k)}|_{\Gamma^{(k,\ell)}}
			-
			\mathbf v^{(\ell)}|_{\Gamma^{(k,\ell)}}.
	\]
\end{lemma}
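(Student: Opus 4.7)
The plan is to verify the claim first at the level of coefficient vectors and then convert it into the stated functional identity on each edge $\Gamma^{(k,\ell)}$ by invoking the trace matching built into Assumption~\ref{assumption:conforming}. I expect the only real difficulty to be keeping the signs in the rows of $B$ consistent throughout.

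First, I would unpack the construction of $B=(B^{(1)},\ldots,B^{(K)})$: each row corresponds to one pair of matched velocity basis functions whose common trace lives on an edge interior, with $+1$ in the column of the patch-$k$ coefficient and $-1$ in the column of the patch-$\ell$ coefficient (for a fixed ordering, say $k<\ell$), while vertex degrees of freedom are excluded because they are primal (cf.~Fig.~\ref{fig:nonredundant}). Hence, for a matched pair $(k,i)\leftrightarrow(\ell,j)$,
\[
  (B\underline{\mathbf v})_{(k,i),(\ell,j)} = \underline{\mathbf v}^{(k)}_i-\underline{\mathbf v}^{(\ell)}_j.
\]
Evaluating $(B^{\top} B\underline{\mathbf v})^{(k)}$ coefficient by coefficient, every edge-interior DOF of patch~$k$ is involved in exactly one matched pair, and only that single row of $B$ contributes, so $(B^{\top} B\underline{\mathbf v})^{(k)}_i = \underline{\mathbf v}^{(k)}_i-\underline{\mathbf v}^{(\ell)}_j$; all purely interior and corner DOFs of $\Omega^{(k)}$ are annihilated since they do not lie in the image of $B^{\top}$. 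Because every touched DOF is shared by exactly two patches, $\mathcal D^{-1}$ acts as a uniform rescaling on those DOFs, and by the multiplicity-scaling convention yields the coefficient-level identity $\underline{\mathbf w}^{(k)}_i = \underline{\mathbf v}^{(k)}_i-\underline{\mathbf v}^{(\ell)}_j$ on each edge-interior coefficient of $\Gamma^{(k,\ell)}$.

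Finally, I would re-expand $\mathbf w^{(k)}$ in the B-spline basis of $\mathbf V^{(k)}$ and restrict to $\Gamma^{(k,\ell)}$. By Assumption~\ref{assumption:conforming}, the matched patch-$k$ and patch-$\ell$ basis functions have identical traces on $\Gamma^{(k,\ell)}$, which turns the coefficient identity into the pointwise identity $\mathbf w^{(k)}|_{\Gamma^{(k,\ell)}}=(\mathbf v^{(k)}-\mathbf v^{(\ell)})|_{\Gamma^{(k,\ell)}}$ on the open edge. The subtle remaining point concerns the endpoints: corner DOFs are excluded from the rows of $B$, but since $\mathbf v\in\widetilde{\mathbf W}$ forces $\mathbf v^{(k)}(x_j)=\mathbf v^{(\ell)}(x_j)$ at every vertex, the right-hand side vanishes there as well, and the identity extends by continuity to the closure of $\Gamma^{(k,\ell)}$.
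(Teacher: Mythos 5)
The paper gives no proof of this lemma at all (it defers to Lemma~4.16 of Ref.~\refcite{SchneckenleitnerTakacs:2020}), so your direct coefficient computation is the right kind of argument, and most of it is sound: the sign bookkeeping in the rows of $B$, the observation that each edge-interior degree of freedom sits in exactly one row so that $(B^\top B\underline{\mathbf v})^{(k)}_i=\underline{\mathbf v}^{(k)}_i-\underline{\mathbf v}^{(\ell)}_j$ while interior and corner coefficients are annihilated, and the use of Assumption~\ref{assumption:conforming} to convert the coefficient identity into a trace identity. The genuine gap is at the one step you delegated to ``the multiplicity-scaling convention.'' With the paper's definition $\scaling_k:=2I$, the scaling acts as $\mathcal D^{-1}=\tfrac12 I$ on every interface degree of freedom, so the computation you set up yields $\underline{\mathbf w}^{(k)}_i=\tfrac12\bigl(\underline{\mathbf v}^{(k)}_i-\underline{\mathbf v}^{(\ell)}_j\bigr)$, hence $\mathbf w^{(k)}|_{\Gamma^{(k,\ell)}}=\tfrac12\bigl(\mathbf v^{(k)}-\mathbf v^{(\ell)}\bigr)|_{\Gamma^{(k,\ell)}}$, and not the unscaled difference in the statement. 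You cannot remove the factor by appealing to the convention --- the multiplicity scaling is precisely what introduces it (compare Lemma~\ref{lem:BdBtB}, which is consistent with the factor $\tfrac12$ being present). You must either carry the $\tfrac12$ into the conclusion, or state explicitly the normalization of $\mathcal D$ under which it disappears; as written, the proposal asserts the claimed identity at exactly the point where the arithmetic is nontrivial.

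A second, smaller gap concerns your passage from the open edge to its closure. For B-splines of degree $\degree+1\ge 2$, the two corner basis functions of a patch have nonzero trace on a positive-length portion of the \emph{open} edge $\Gamma^{(k,\ell)}$, so the trace of $\mathbf w^{(k)}$ on the open edge already involves the corner coefficients, which your coefficient identity does not cover; agreement at the two endpoints plus continuity is therefore not enough. The correct argument is that the corner coefficients of $\mathbf w^{(k)}$ vanish (corners are excluded from the rows of $B$, hence from the image of $B^\top$), while the corner contributions to $\mathbf v^{(k)}-\mathbf v^{(\ell)}$ cancel because $\mathbf v\in\widetilde{\mathbf W}$ forces the matched corner coefficients to coincide (for open knot vectors the corner coefficient equals the corner value). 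With that, the identity holds on all of $\overline{\Gamma^{(k,\ell)}}$ directly and no limiting argument is needed.
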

This lemma is standard, for a proof, see, e.g., Lemma~4.16 in Ref.~\refcite{SchneckenleitnerTakacs:2020}.

\begin{lemma}\label{lem:Wtildepreserv}
	Let $\underline{\mathbf w} = \scaling^{-1} B^\top B \underline{\mathbf v}$, where 
	$\underline{\mathbf w}$ and $\underline{\mathbf v}$ are the coefficient representations of
	$\mathbf w\in {\mathbf W}$ and $\mathbf v\in \widetilde{\mathbf W}$, respectively.
	We have $\mathbf w\in \widetilde{\mathbf W}$.
\end{lemma}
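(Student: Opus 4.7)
My plan is to check the three conditions that define $\widetilde{\mathbf W}$ in turn: vanishing jumps at patch vertices, vanishing jump of the average normal component on each interface, and vanishing total normal flux on each patch boundary. Throughout, I will exploit Lemma~\ref{lem:BBt}, which describes the interface trace of $\mathbf w^{(k)}$ as the jump $\mathbf v^{(k)} - \mathbf v^{(\ell)}$, together with the fact that the constraint matrix $B$ (by construction, see Figure~\ref{fig:nonredundant}) acts only on interface degrees of freedom other than the corners, so that $B^\top B\underline{\mathbf v}$ vanishes at all patch vertices. This latter observation immediately gives $\mathbf w^{(k)}(x_j)=0=\mathbf w^{(\ell)}(x_j)$ for all common vertices, which settles the first condition.

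For the second condition, I would apply Lemma~\ref{lem:BBt} to obtain $\mathbf w^{(k)}|_{\Gamma^{(k,\ell)}}=\mathbf v^{(k)}-\mathbf v^{(\ell)}$ and $\mathbf w^{(\ell)}|_{\Gamma^{(k,\ell)}}=\mathbf v^{(\ell)}-\mathbf v^{(k)}$, so that $(\mathbf w^{(k)}-\mathbf w^{(\ell)})|_{\Gamma^{(k,\ell)}}=2(\mathbf v^{(k)}-\mathbf v^{(\ell)})$. Taking the normal component and integrating,
\[
\int_{\Gamma^{(k,\ell)}}(\mathbf w^{(k)}-\mathbf w^{(\ell)})\cdot\mathbf n^{(k)}\,\mathrm d s
= 2\int_{\Gamma^{(k,\ell)}}(\mathbf v^{(k)}-\mathbf v^{(\ell)})\cdot\mathbf n^{(k)}\,\mathrm d s = 0,
\]
the last equality using $\mathbf v\in\widetilde{\mathbf W}$.

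The main obstacle is the third condition, i.e., $\int_{\partial\Omega^{(k)}}\mathbf w^{(k)}\cdot\mathbf n^{(k)}\,\mathrm d s=0$, because here I need to combine the interface jump formula of Lemma~\ref{lem:BBt} with both the edge-matching and the zero-total-flux properties of $\mathbf v$, and I have to track sign flips $\mathbf n^{(\ell)}=-\mathbf n^{(k)}$ between neighbours. My approach is to split $\partial\Omega^{(k)}$ into $\Gamma^{(k)}_D$ (on which $\mathbf w^{(k)}=\mathbf v^{(k)}=0$ by the homogeneous Dirichlet conditions inherent in the space $\mathbf V^{(k)}$) and the interfaces $\Gamma^{(k,\ell)}$ for $\ell\in\mathcal N_\Gamma(k)$. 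Using Lemma~\ref{lem:BBt}, this gives
\[
\int_{\partial\Omega^{(k)}}\mathbf w^{(k)}\cdot\mathbf n^{(k)}\,\mathrm d s
= \int_{\partial\Omega^{(k)}}\mathbf v^{(k)}\cdot\mathbf n^{(k)}\,\mathrm d s
- \sum_{\ell\in\mathcal N_\Gamma(k)}\int_{\Gamma^{(k,\ell)}}\mathbf v^{(\ell)}\cdot\mathbf n^{(k)}\,\mathrm d s.
\]
The first term is zero since $\mathbf v\in\widetilde{\mathbf W}$. For each summand in the second term, the edge-matching condition for $\mathbf v$ rewrites $\int_{\Gamma^{(k,\ell)}}\mathbf v^{(\ell)}\cdot\mathbf n^{(k)}\,\mathrm d s$ as $\int_{\Gamma^{(k,\ell)}}\mathbf v^{(k)}\cdot\mathbf n^{(k)}\,\mathrm d s$. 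Summing over $\ell$ and adding the vanishing Dirichlet contribution, the sum equals $\int_{\partial\Omega^{(k)}}\mathbf v^{(k)}\cdot\mathbf n^{(k)}\,\mathrm d s$, which again vanishes by $\mathbf v\in\widetilde{\mathbf W}$. Hence the third condition holds and $\mathbf w\in\widetilde{\mathbf W}$.
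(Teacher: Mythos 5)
Your proposal is correct and follows essentially the same route as the paper's (very terse) proof: corner degrees of freedom are excluded from $B$, which settles the vertex condition, and Lemma~\ref{lem:BBt} combined with the edge-matching of the normal-component integrals of $\mathbf v$ settles the remaining two conditions. Your verification of the third condition is slightly more roundabout than necessary --- by Lemma~\ref{lem:BBt} and the edge-matching of $\mathbf v$, each individual integral $\int_{\Gamma^{(k,\ell)}} \mathbf w^{(k)}\cdot\mathbf n^{(k)}\,\mathrm ds$ already vanishes, so the total flux over $\partial\Omega^{(k)}$ is a sum of zeros --- but your argument is valid.
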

\begin{proof}
	Since the basis functions on the vertices are not affected by the constraints, functions
	that are represented by coefficient vectors in the image of $B$ vanish on the vertices.
	This guarantees continuity at the vertices.
	Lemma~\ref{lem:BBt} and the continuity of the integrals of the normal components of $\mathbf v$ imply that the corresponding integrals of $\mathbf w$ vanish.
	This finishes the proof.
\end{proof}

\begin{lemma}\label{lem:BdBtB}
	The identity $B \scaling^{-1} B^\top B=B$ holds.
\end{lemma}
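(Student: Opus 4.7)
The plan is to establish the stronger identity $B\scaling^{-1}B^\top = I$, from which $B\scaling^{-1}B^\top B = B$ follows immediately on multiplying by $B$ from the right. The proof is essentially combinatorial and relies on the sparsity structure of $B$ as described in Section~\ref{sec:4} and depicted in Figure~\ref{fig:nonredundant}: $B$ couples matching velocity basis functions across interfaces, while the corner degrees of freedom are excluded because they are handled as primal variables.

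First, I would spell out the structure of $B$. By Assumption~\ref{assumption:conforming}, every basis function of $\mathbf V^{(k)}$ with nontrivial trace on an interface $\Gamma^{(k,\ell)}$ has a unique matching counterpart in the basis of $\mathbf V^{(\ell)}$. For each such matching pair, $B$ contributes exactly one row, which has a $+1$ at one of the two indices, a $-1$ at the other, and zeros elsewhere.

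The key observation is then a disjointness statement: any two distinct rows of $B$ have disjoint sets of nonzero columns. This is because a non-corner basis function can have nontrivial trace on at most one edge of the patch (the only basis functions whose support touches two distinct edges sit at a corner, and corners are excluded from $B$). Consequently, every non-corner interface basis function belongs to at most one matching pair.

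Given this, I would compute $M := B\scaling^{-1}B^\top$ entrywise using $\scaling^{-1} = \tfrac{1}{2}I$. For $I \neq J$, disjointness of the nonzero columns of rows $I$ and $J$ yields $M_{I,J} = 0$. For $I = J$, exactly the two nonzero entries of row $I$ contribute, giving
\[
    M_{I,I} = (+1)\cdot\tfrac{1}{2}\cdot(+1) + (-1)\cdot\tfrac{1}{2}\cdot(-1) = 1.
\]
Thus $M = I$ and $B\scaling^{-1}B^\top B = IB = B$. The only nontrivial step is the disjointness observation; the remaining arithmetic is immediate and I do not anticipate any real obstacle once the bookkeeping for the excluded corner basis functions is made explicit.
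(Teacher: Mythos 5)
Your proof is correct and is precisely the standard non-redundancy argument that the paper invokes by citation only (it gives no proof, just the remark that the statement is standard \emph{because the corners are excluded} --- which is exactly your disjointness observation: a non-corner tensor-product basis function has nontrivial trace on at most one edge, so each column of $B$ meets at most one row). In fact you establish the stronger identity $B\scaling^{-1}B^\top=I$, i.e.\ $BB^\top=2I$, which is consistent with the multiplicity-two scaling $\scaling_k=2I$ and also yields the full row rank of $B$ used later in Theorem~\ref{thrm:fin}.
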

Since we exclude the corners, this statement is standard, see,
	e.g., Ref.~\refcite{MandelDohrmannTezaur:2005a}.

\begin{lemma}\label{lem:BdtB}
	For all $\mathbf v \in \widetilde{\mathbf W}$ with coefficient representation $\underline{\mathbf v}$, the estimate
	\[
		\| \scaling^{-1} B^\top B \underline{\mathbf v} \|_{S_K}^2
		\le
		C\, \degree \left(1+\log \degree+\max_{k=1,\ldots,K} \log\frac{H_k}{h_k}\right)^2
		\| \underline{\mathbf v}  \|_{S_K}^2
	\]
	holds,
	where $C$ only depends on the constants from the
	Assumptions~\ref{ass:nabla} and~\ref{ass:quasiuniform}.
\end{lemma}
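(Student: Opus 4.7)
The plan is to reduce this to the known scaled-Dirichlet estimate for the scalar Poisson IETI-DP method established in Ref.~\refcite{SchneckenleitnerTakacs:2020}. The crucial observation is that the matrix $S_K$, as defined in \eqref{eq:skdef}, is the Schur complement associated to the purely elliptic block $\kbox{K}$ of the saddle point matrix $\kbox{A}$, i.e., to the bilinear form $(\nabla\cdot,\nabla\cdot)_{L^2(\Omega^{(k)})}$ acting on the vector-valued velocity space $\mathbf V^{(k)}$. By \eqref{eq:skdesc}, we have
\[
\kbox{\underline{\mathbf w}}^\top \kbox{S_K} \kbox{\underline{\mathbf w}} = |\mathcal H_k \kbox{\mathbf w}|_{H^1(\Omega^{(k)})}^2,
\]
and this $H^1$-seminorm of the harmonic extension splits as a sum over the two components of the velocity. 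Therefore $S_K$ decouples into two independent scalar Poisson Schur complements, one per velocity component.

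Next, I would observe that the scaling $\scaling_k = 2I$ is precisely the multiplicity scaling used in the scalar IETI-DP analysis of the Poisson problem, and that $B$ (which, as discussed after Figure~\ref{fig:nonredundant}, excludes the vertex degrees of freedom) has exactly the structure of the standard jump operator for the Poisson problem. Moreover, by Lemma~\ref{lem:Wtildepreserv}, the coefficient vector $\mathcal{D}^{-1}B^\top B\underline{\mathbf v}$ again represents a function in $\widetilde{\mathbf W}$, so in particular each scalar component is continuous at the patch vertices. This is exactly the setting in which the analogous scalar estimate was proven.

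Consequently, I would apply the Poisson scaled-Dirichlet estimate (cf. Theorem~4.24 or the corresponding sequence of lemmas in Ref.~\refcite{SchneckenleitnerTakacs:2020}) componentwise: for each scalar velocity component $v_i$, one obtains
\[
\|\scaling^{-1} B^\top B \underline v_i\|_{S_K^{\mathrm{scal}}}^2 \le C\,\degree\bigl(1+\log\degree+\max_k \log(H_k/h_k)\bigr)^2 \|\underline v_i\|_{S_K^{\mathrm{scal}}}^2,
\]
where $S_K^{\mathrm{scal}}$ denotes the scalar Poisson Schur complement. Summing over the two components and using the aforementioned decoupling of $S_K$ yields the claim, with a constant $C$ that depends only on the constants of Assumptions~\ref{ass:nabla} and~\ref{ass:quasiuniform}.

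The only potentially delicate point — and the one I would need to verify carefully — is that the additional primal constraints present in our setting (the edge integrals of the normal velocity components) do \emph{not} spoil the reduction: they are extra constraints on $\widetilde{\mathbf W}$, hence they shrink the space over which the supremum interpretations act, but the bound we need is a pointwise (i.e., per vector $\underline{\mathbf v}$) norm equivalence in $S_K$, for which only the vertex continuity built into $\widetilde{\mathbf W}$ via Lemma~\ref{lem:Wtildepreserv} is used by the Poisson proof. Thus the main obstacle is essentially bookkeeping — tracing the scalar proof through our vector-valued, edge-augmented setting — rather than a new analytical difficulty, and the estimate follows.
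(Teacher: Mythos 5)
Your proposal is correct and follows essentially the same route as the paper: the paper's proof also reduces to the scalar Poisson analysis of Ref.~\refcite{SchneckenleitnerTakacs:2020} by applying its Theorem~4.2 and Lemmas~4.14/4.15 to each velocity component separately, together with the jump relation of Lemma~\ref{lem:BBt} and the Poincar\'e inequality~\eqref{eq:poincare}. Your observation that the extra edge-integral primal constraints merely shrink $\widetilde{\mathbf W}$ and hence cannot invalidate a per-vector estimate, and that only the vertex continuity is actually used, matches what the paper implicitly relies on.
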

\begin{proof}
	Within this proof, we write $a\lesssim b$ if there is a constant $c$ that
	only depends on the constants from the
	Assumptions~\ref{ass:nabla} and~\ref{ass:quasiuniform} such that
	$a\le c\,b$.
	Let $\mathbf v\in\widetilde{\mathbf W}$ and $\mathbf w\in\mathbf W$ with
	coefficient representations such that
	$\underline{\mathbf w} = \scaling^{-1} B^\top B \underline{\mathbf v}$.
	Using Theorem~4.2 from Ref.~\refcite{SchneckenleitnerTakacs:2020} (which depends on the constants
	from Assumptions~\ref{ass:nabla} and~\ref{ass:quasiuniform}), we obtain
	\[
		\| \scaling^{-1} B^\top B \underline{\mathbf v} \|_{S_K}^2
		=
		\| \underline{\mathbf w} \|_{S_K}^2
		\lesssim
		\sum_{k=1}^K
		| \mathcal H_k \mathbf w^{(k)} |_{H^1(\Omega^{(k)})}^2
		\lesssim
		\degree
		\sum_{k=1}^K
		| \mathbf w^{(k)} |_{H^{1/2}(\partial\Omega^{(k)})}^2,
	\]
	where we apply Theorem~4.2 from Ref.~\refcite{SchneckenleitnerTakacs:2020} to both components
	of the velocity variable separately.
	By applying Lemma~4.15 from Ref.~\refcite{SchneckenleitnerTakacs:2020} (which depends on the constants
	from Assumptions~\ref{ass:nabla} and~\ref{ass:quasiuniform}) to both velocity components
	separately, we further obtain
	\[
		\| \scaling^{-1} B^\top B \underline{\mathbf v} \|_{S_K}^2
		\lesssim
		\degree
		\sum_{k=1}^K
		\sum_{\ell\in\mathcal N_\Gamma(k)}
		\big(
		| \mathbf w^{(k)} |_{H^{1/2}(\Gamma^{(k,\ell)})}^2
		+
		\Lambda
		| \mathbf w^{(k)} |_{L_0^\infty(\Gamma^{(k,\ell)})}^2
		\big),
	\]
	where $\Lambda:=1+\log \degree+\max_{k=1,\ldots,K} \log\frac{H_k}{h_k}$
	and
	\[
	|v|_{L_0^\infty(\Gamma^{(k,\ell)})}:=\inf_{q\in\mathbb R}
	\|v-q\|_{L^\infty(\Gamma^{(k,\ell)})}.
	\]
	Using Lemma~\ref{lem:BBt} and the triangle inequality, we further obtain
	\begin{align*}
		&\| \scaling^{-1} B^\top B \underline{\mathbf v} \|_{S_K}^2
		\\&\quad\lesssim
		\degree
		\sum_{k=1}^K
		\sum_{\ell\in\mathcal N_\Gamma(k)}
		\big(
		| \mathbf v^{(k)} - \mathbf v^{(\ell)} |_{H^{1/2}(\Gamma^{(k,\ell)})}^2
		+
		\Lambda
		| \mathbf v^{(k)} - \mathbf v^{(\ell)} |_{L_0^\infty(\Gamma^{(k,\ell)})}^2
		\big)
		\\&\quad\lesssim
		\degree
		\sum_{k=1}^K
		\big(
		| \mathbf v^{(k)} |_{H^{1/2}(\Gamma^{(k,\ell)})}^2
		+
		\Lambda
		| \mathbf v^{(k)} |_{L_0^\infty(\Gamma^{(k,\ell)})}^2
		\big).
	\end{align*}
	By applying Theorem~4.2 and Lemma~4.14 from Ref.~\refcite{SchneckenleitnerTakacs:2020}
	(which depend on the constants
	from Assumptions~\ref{ass:nabla} and~\ref{ass:quasiuniform}) again to both velocity components,
	we arrive at
	\begin{align*}
		\| \scaling^{-1} B^\top B \underline{\mathbf v} \|_{S_K}^2
		\lesssim
		\degree
		\sum_{k=1}^K
		\big(
		| \mathcal H_k \mathbf v^{(k)} |_{H^{1}(\Omega^{(k)})}^2
		+
		\Lambda^2
		\inf_{q\in\mathbb R}
		\| \mathcal H_k \mathbf v^{(k)} - q \|_{H^{1}(\Omega^{(k)})}^2
		\big).
	\end{align*}
	Using a Poincaré inequality~\eqref{eq:poincare}, we have
	\begin{align*}
		\| \scaling^{-1} B^\top B \underline{\mathbf v} \|_{S_K}^2
		\lesssim
		\degree
		\sum_{k=1}^K
		\big(
		| \mathcal H_k \mathbf v^{(k)} |_{H^{1}(\Omega^{(k)})}^2
		+
		\Lambda^2
		| \mathcal H_k \mathbf v^{(k)} |_{H^{1}(\Omega^{(k)})}^2
		\big),
	\end{align*}	
	from which the desired result follows.	
\end{proof}

If standard Krylov space methods are applied to the singular matrix $F$,
preconditioned with a non-singular preconditioner $M_{\mathrm{sD}}$,
all iterations live in the corresponding factor space. The convergence
behavior is dictated by the essential condition number of
$M_{\mathrm{sD}}F$, cf. Remark~23 in Ref.~\refcite{MandelDohrmannTezaur:2005a}.
The essential condition number for a positive semidefinite matrix
is the ratio between the largest eigenvalue and the smallest positive
eigenvalue.

\begin{theorem}\label{thrm:fin}
	Provided that the IETI-DP solver is set up as outlined in the previous
	section, the condition number of the preconditioned system satisfies
	\[
		\kappaess(M_{\mathrm{sD}} F) \le C\,
		 \degree \left(1+\log \degree+\max_{k=1,\ldots,K} \log\frac{H_k}{h_k}\right)^2\,
		\left(\max_{k=1,\ldots,K}\frac{\delta}{\beta_k}\right),
	\]
	where $\delta$ is as in~\eqref{eq:boundedness},
	$\beta_k$ is as in~\eqref{eq:infsuplocal}
	and and $C$ is a constant that
	only depends on the constants from the
	Assumptions~\ref{ass:nabla} and~\ref{ass:quasiuniform}.
\end{theorem}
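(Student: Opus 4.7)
The plan is to bound $\kappaess(M_{\mathrm{sD}}\bar F)=\lambda_{\max}/\lambda_{\min}$ via the variational identity \eqref{eq:barF} for $\|\cdot\|_{\bar F}$ together with the explicit form \eqref{eq:precM} for $M_{\mathrm{sD}}$. Writing the two extreme Rayleigh quotients as $\underline\lambda^\top\bar F\underline\lambda/\underline\lambda^\top M_{\mathrm{sD}}^{-1}\underline\lambda$, I would estimate them separately, transferring the Poisson FETI-DP analysis of Ref.~\refcite{SchneckenleitnerTakacs:2020} (distilled into Lemmas~\ref{lem:BdBtB} and~\ref{lem:BdtB}) to the Stokes setup via the $S_A$--$S_K$ equivalence of Lemma~\ref{lem:h1like}.

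For the upper eigenvalue bound, I would start from \eqref{eq:barF}, rewrite $(B^\top\underline\lambda,\underline{\mathbf w})_{\ell^2}=(\underline\lambda,B\underline{\mathbf w})_{\ell^2}$, and apply Cauchy--Schwarz in the $M_{\mathrm{sD}}$ duality:
\[
(\underline\lambda,B\underline{\mathbf w})_{\ell^2}\le\|\underline\lambda\|_{M_{\mathrm{sD}}^{-1}}\|B\underline{\mathbf w}\|_{M_{\mathrm{sD}}},\qquad
\|B\underline{\mathbf w}\|_{M_{\mathrm{sD}}}^2=\|\scaling^{-1}B^\top B\underline{\mathbf w}\|_{S_K}^2.
\]
Lemma~\ref{lem:BdtB} bounds the right-hand side by $C\,\degree\,\Lambda^2\,\|\underline{\mathbf w}\|_{S_K}^2$ with $\Lambda:=1+\log\degree+\max_k\log(H_k/h_k)$, and the cheap direction $\|\mathbf w\|_{S_K}\le\|\mathbf w\|_{S_A}$ of Lemma~\ref{lem:h1like} then absorbs this into the denominator of \eqref{eq:barF}. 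Taking the supremum over $\mathbf w\in\widetilde{\mathbf W}$ yields $\lambda_{\max}(M_{\mathrm{sD}}\bar F)\le C\,\degree\,\Lambda^2$, with no $\beta_k$ dependence whatsoever.

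For the lower eigenvalue bound, I would dualize. Given $\underline\lambda\in\operatorname{Range}(\bar F)$, realise $\|\underline\lambda\|_{M_{\mathrm{sD}}^{-1}}$ as $\sup_{\underline\mu}(\underline\lambda,\underline\mu)_{\ell^2}/\|\scaling^{-1}B^\top\underline\mu\|_{S_K}$, and for the optimal $\underline\mu^{*}$ construct an assembled test function $\underline{\mathbf w}^{*}\in\widetilde{\mathbf W}$ with $B\underline{\mathbf w}^{*}$ equal to the projection of $\underline\mu^{*}$ onto $\operatorname{Range}(B)$; Lemmas~\ref{lem:BdBtB} and~\ref{lem:Wtildepreserv} give the required membership in $\widetilde{\mathbf W}$. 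Plugging $\underline{\mathbf w}^{*}$ into \eqref{eq:barF} reduces the estimate to controlling $\|\underline{\mathbf w}^{*}\|_{S_A}^2$ by $\max_k(\delta/\beta_k)\cdot\|\scaling^{-1}B^\top\underline\mu^{*}\|_{S_K}^2$, which then delivers $\lambda_{\min}(M_{\mathrm{sD}}\bar F)\ge c/\max_k(\delta/\beta_k)$ and hence $\kappaess(M_{\mathrm{sD}}\bar F)\le C\,\degree\,\Lambda^2\,\max_k(\delta/\beta_k)$.

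The main obstacle is precisely this last $S_A$--$S_K$ comparison for the specific witness $\underline{\mathbf w}^{*}$. A brute substitution of the operator inequality $S_A^{(k)}\le 3(\delta/\beta_k)^2 S_K^{(k)}$ of Lemma~\ref{lem:h1like} would yield a \emph{quadratic} $(\max_k\delta/\beta_k)^2$ penalty. To recover the linear exponent claimed in the theorem, one has to work with the finer decomposition $S_A^{(k)}=S_K^{(k)}+S_D^{(k)}-S_E^{(k)}$ established inside the proof of Lemma~\ref{lem:h1like}: the $S_K$-part of $\|\underline{\mathbf w}^{*}\|_{S_A}^2$ is controlled by the Poisson FETI-DP estimate (no $\beta_k$ loss), while the $S_D$-part admits a single application of the local inf-sup \eqref{eq:infsuplocal}, balanced against the $S_K$-part by a Young-type inequality. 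This balanced argument, rather than a naive operator-inequality substitution, is what produces the single-power dependence on $\max_k\delta/\beta_k$ and gives the bound stated in Theorem~\ref{thrm:fin}.
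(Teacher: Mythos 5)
Your upper bound is correct and is, in essence, the paper's own: it rests on \eqref{eq:barF}, the identity $\|B\underline{\mathbf w}\|_{M_{\mathrm{sD}}}^2=\|\scaling^{-1}B^\top B\underline{\mathbf w}\|_{S_K}^2$, Lemma~\ref{lem:BdtB}, and the cheap direction $\|\underline{\mathbf w}\|_{S_K}\le\|\underline{\mathbf w}\|_{S_A}$ of Lemma~\ref{lem:h1like}, giving $\lambda_{\max}(M_{\mathrm{sD}}\bar F)\le C\,\degree\,\Lambda^2$ with $\Lambda:=1+\log\degree+\max_k\log(H_k/h_k)$ and no $\beta_k$-loss (your variant is in fact slightly cleaner, since you apply Lemma~\ref{lem:BdtB} only on $\widetilde{\mathbf W}$, where it is stated). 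Your lower-bound skeleton --- restrict to $\underline\lambda$ of the form $M_{\mathrm{sD}}B\underline{\mathbf w}$, use Lemmas~\ref{lem:BdBtB} and~\ref{lem:Wtildepreserv} to obtain the witness $\underline{\mathbf v}=\scaling^{-1}B^\top B\underline{\mathbf w}\in\widetilde{\mathbf W}$ with $B\underline{\mathbf v}=B\underline{\mathbf w}$, then compare $\|\cdot\|_{S_A}$ with $\|\cdot\|_{S_K}$ --- is also exactly the paper's.

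The gap is the final step. Your ``balanced argument'' is an announcement, not a proof: no Young-type inequality is exhibited, and I do not see how one could work. The quadratic power in Lemma~\ref{lem:h1like} is intrinsic to how the local inf-sup condition enters: in that lemma's proof, $S_D^{(k)}$ is built from $(W^{(k)})^{-1}$, and a \emph{single} application of \eqref{eq:infsuplocal} gives $W^{(k)}\ge\tfrac12\beta_k^2\,M_p^{(k)}$ --- the inf-sup constant is already squared after one application, so ``one application of the local inf-sup'' costs $\beta_k^{-2}$, not $\beta_k^{-1}$, and yields $S_D^{(k)}\le 2(\delta/\beta_k)^2S_K^{(k)}$. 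There is no product of two half-powers available to balance against the $S_K$-part. Moreover, since $\sup_{\mathbf v\in\widetilde{\mathbf W}}(B\underline{\mathbf v},\underline\lambda)_{\ell^2}/\|\underline{\mathbf v}\|_{S}$ is the norm of $\underline\lambda$ induced by the inverse of $S$ restricted to $\widetilde{\mathbf W}$, the worst case over $\underline\lambda$ of the ratio of the two suprema (with $S=S_K$ and $S=S_A$) is exactly the spectral ratio of the two restricted forms, i.e.\ of order $\max_k(\delta/\beta_k)$ at the level of norms and $\max_k(\delta/\beta_k)^2$ at the level of eigenvalues. To beat this, one would need special structure of the relevant $\underline\lambda$; neither you nor the paper identifies any.

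You should also know what the paper actually does here: no balancing at all. Its lower bound applies Lemma~\ref{lem:h1like} directly, in the displayed step $\sup_{\mathbf v}(B\underline{\mathbf v},\underline\lambda_1)_{\ell^2}/\|\underline{\mathbf v}\|_{S_K}\lesssim\bigl(\max_k\delta/\beta_k\bigr)^{-1}\sup_{\mathbf v}(B\underline{\mathbf v},\underline\lambda_1)_{\ell^2}/\|\underline{\mathbf v}\|_{S_A}$. As written this cannot be correct: the factor is $\le1$, while the left supremum dominates the right one because $\|\cdot\|_{S_K}\le\|\cdot\|_{S_A}$, so for $\max_k\delta/\beta_k$ large the inequality is contradictory. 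With the factor corrected to $\bigl(\max_k\delta/\beta_k\bigr)^{+1}$, the paper's own chain gives $\lambda_{\min}\gtrsim\bigl(\max_k\delta/\beta_k\bigr)^{-2}$ and hence $\kappaess(M_{\mathrm{sD}}F)\le C\,\degree\,\Lambda^2\bigl(\max_k\delta/\beta_k\bigr)^{2}$ --- precisely the quadratic bound you predicted the naive route would produce. So your diagnosis of the obstacle is accurate, your proposed cure is unsubstantiated, and the linear power stated in Theorem~\ref{thrm:fin} is not established by the paper's argument either: tracking exponents, that proof supports only the quadratic dependence on $\max_k(\delta/\beta_k)$.
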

\begin{proof}
Within this proof, we write $a\lesssim b$ if there is a constant $c$ that
	only depends on the constants from the
	Assumptions~\ref{ass:nabla} and~\ref{ass:quasiuniform} such that
	$a\le c\,b$.
For an upper bound, we have using Lemma~\ref{lem:h1like},~\eqref{eq:barF},
Lemma~\ref{lem:BdtB} and the fact that $B$ has full rank that
\begin{align*}
&\sqrt{\underline \lambda^\top \bar F \underline \lambda}
	=
	\sup_{\mathbf w \in \widetilde{\mathbf W}\backslash\{0\}}
	\frac{(B^\top \underline \lambda, \underline{\mathbf w})_{\ell^2} }{\|\underline{\mathbf w}\|_{S_A}}
	\le
	\sup_{\mathbf w \in \mathbf W\backslash\{0\}}
	\frac{(B^\top \underline \lambda, \underline{\mathbf w})_{\ell^2} }{\|\underline{\mathbf w}\|_{S_K}}
	=
	\sup_{\mathbf w \in \mathbf W\backslash\mathrm{Ker}B}
	\frac{( \underline \lambda, B \underline{\mathbf w})_{\ell^2} }{\|\underline{\mathbf w}\|_{S_K}}\\
	&  \quad \lesssim \omega
	\sup_{\mathbf w \in \mathbf W\backslash\mathrm{Ker}B}
	\frac{( \underline \lambda, B \underline{\mathbf w})_{\ell^2} }{\|\scaling^{-1} B^\top B \underline{\mathbf w}\|_{S_K}}
	\le
	\omega \sup_{\underline \mu\in L \backslash\{0\}}
	\frac{(\underline \lambda, \underline \mu)_{\ell^2} }{\|\scaling^{-1} B^\top  \underline \mu\|_{S_K}}
	\\&\quad=
	\omega \sup_{\underline \mu\in L\backslash\{0\}}
	\frac{(\underline \lambda, \underline \mu)_{\ell^2} }{\| \underline \mu\|_{M_{\mathrm{sD}}}}
	= \omega \; \sqrt{\underline \lambda^\top M_{\mathrm{sD}}^{-1} \underline \lambda},
\end{align*}
where $L$ is a vector space of the corresponding dimension and
$\omega^2:=\degree ( 1+\log \degree + \max_k \log \frac{H_k}{h_k})^2$.
This provides an upper bound for the eigenvalues of $M_{\mathrm{sD}}^{-1} \bar F$.

Next, we estimate the smallest non-zero eigenvalue. Consider the
generalized eigenvalue problem
\[
		\bar F\underline \lambda = \mu M_{\mathrm{sD}}^{-1} \underline\lambda.
\]
We are interested in the smallest non-zero eigenvalue $\mu$. Define
\[
		L_0 := \{
							\underline\lambda_0 : (\underline\lambda_0,
												B\, \underline{\mathbf w})_{\ell^2}=0
											\;\forall\, \mathbf w\in\widetilde{\mathbf W}
					 \}
\]
and
\[
		L_1 := \{
							\underline\lambda_1 : \underline\lambda_1
												= M_{\mathrm{sD}}B\, \underline{\mathbf w}
														\mbox{ with }
														\mathbf w \in \widetilde{\mathbf W}
					 \}
\]
and observe that $(\underline\lambda_0, \underline\lambda_1)_{
M_{\mathrm{sD}}^{-1}}=0$ and $\bar F\underline\lambda_0 =0$
for all $\underline \lambda_0\in L_0$ and $\underline \lambda_1 \in L_1$. This means that
all $\underline \lambda_0\in L_0$ are eigenvectors with eigenvalue $0$. Since all eigenvectors with non-zero eigenvalue are $M_{\mathrm{sD}}^{-1}$-orthogonal to the eigenvectors with eigenvalue $0$, these eigenvectors have to be in $L_1$. Using $\underline \lambda_1\in L_1\backslash\{0\}$ and Lemma~\ref{lem:BdBtB},
we have 
\begin{align*}
	\sqrt{\underline \lambda_1^\top M_{\mathrm{sD}}^{-1} \underline \lambda_1}
	&=
	\frac{
	(M_{\mathrm{sD}}^{-1} \underline \lambda_1, \underline \lambda_1)_{\ell^2}
	}{
	\|M_{\mathrm{sD}}^{-1} \underline \lambda_1\|_{M_{\mathrm{sD}}}
	}
	\le
	\sup_{\mathbf w \in \widetilde{\mathbf W}\backslash\mathrm{Ker}B}
	\frac{
	(B \underline{\mathbf w},\underline\lambda_1)_{\ell^2}
	}{
	\|B \underline{\mathbf w}\|_{M_{\mathrm{sD}}}
	}
	\\&=
	\sup_{\mathbf w \in \widetilde{\mathbf W}\backslash\mathrm{Ker}B}
	\frac{
	(B \scaling^{-1} B^\top B \underline{\mathbf w},\underline\lambda_1)_{\ell^2}
	}{
	\|\scaling^{-1} B^\top B \underline{\mathbf w}\|_{S_{K}}
	}.
\end{align*}
Using Lemma~\ref{lem:Wtildepreserv}, we know that
$\underline{\mathbf v}:=\scaling^{-1} B^\top B \underline{\mathbf w} \in \widetilde{\mathbf W}$,
so using Lemma~\ref{lem:h1like} we further obtain  
\begin{align*}
	\sqrt{\underline \lambda_1^\top M_{\mathrm{sD}}^{-1} \underline \lambda_1}
	&\le
	\sup_{\mathbf v \in \widetilde{\mathbf W}\backslash\{0\}}
	\frac{
	(B \underline{\mathbf v},\underline\lambda_1)_{\ell^2}
	}{
	\|\underline{\mathbf v}\|_{S_{K}}
	}
	\lesssim
  \left(\max_{k=1,\ldots,K} \frac{\delta}{\beta_k}\right)^{-1}
	\sup_{\mathbf v \in \widetilde{\mathbf W}\backslash\{0\}}
	\frac{
	(B  \underline{\mathbf v},\underline\lambda_1)_{\ell^2}
	}{
	\|\underline{\mathbf v}\|_{S_A}
	}
	\\
	&=
	\left(\max_{k=1,\ldots,K} \frac{\delta}{\beta_k}\right)^{-1}
	\sqrt{\underline\lambda_1^\top \bar F\underline \lambda_1},
\end{align*}
which provides a lower bound for the positive eigenvalues.
Having these eigenvalue bounds, we immediately obtain
the desired bound on the essential condition number.
\end{proof}

\section{Numerical results}
\label{sec:6}

In this section, we present numerical results that illustrate the efficiency of the proposed IETI-DP solver.
In Subsection~\ref{subsec:6:1}, we present results for domains that have been previously
considered in IgA and which are fully covered by the presented theory. In Subsection~\ref{subsec:6:2},
we present a more physical test example using boundary conditions that go beyond the model problem considered for the theory.

\subsection{Results for quarter annulus and Yeti-footprint}
\label{subsec:6:1}

We consider the Stokes problem~\eqref{eq:stokes} with the right-hand-side function
\begin{align*}
  \mathbf{f}(x,y) = (-\pi\cos(\pi x)-2 \pi^2\sin(\pi x) \cos(\pi y),\, 2\pi^2\cos(\pi x) \sin(\pi y))^\top
\end{align*}
and the inhomogeneous Dirichlet boundary conditions
\begin{align*}
  \mathbf{u}(x,y) &= (-\sin(\pi x)\cos(\pi y),\, \cos(\pi x)\sin(\pi y))^\top \quad \text{for} \quad (x,y)\in\partial\Omega.
\end{align*}
We solve this problem on two computational domains: a B-spline approximation of a quarter annulus with 64 patches and the Yeti-footprint, we where we split the patches uniformly such that the domain has 84 patches (rather than 21), see Figure~\ref{fig:domain}.
\begin{figure}[htb]
	\centering
	\includegraphics[height=4cm]{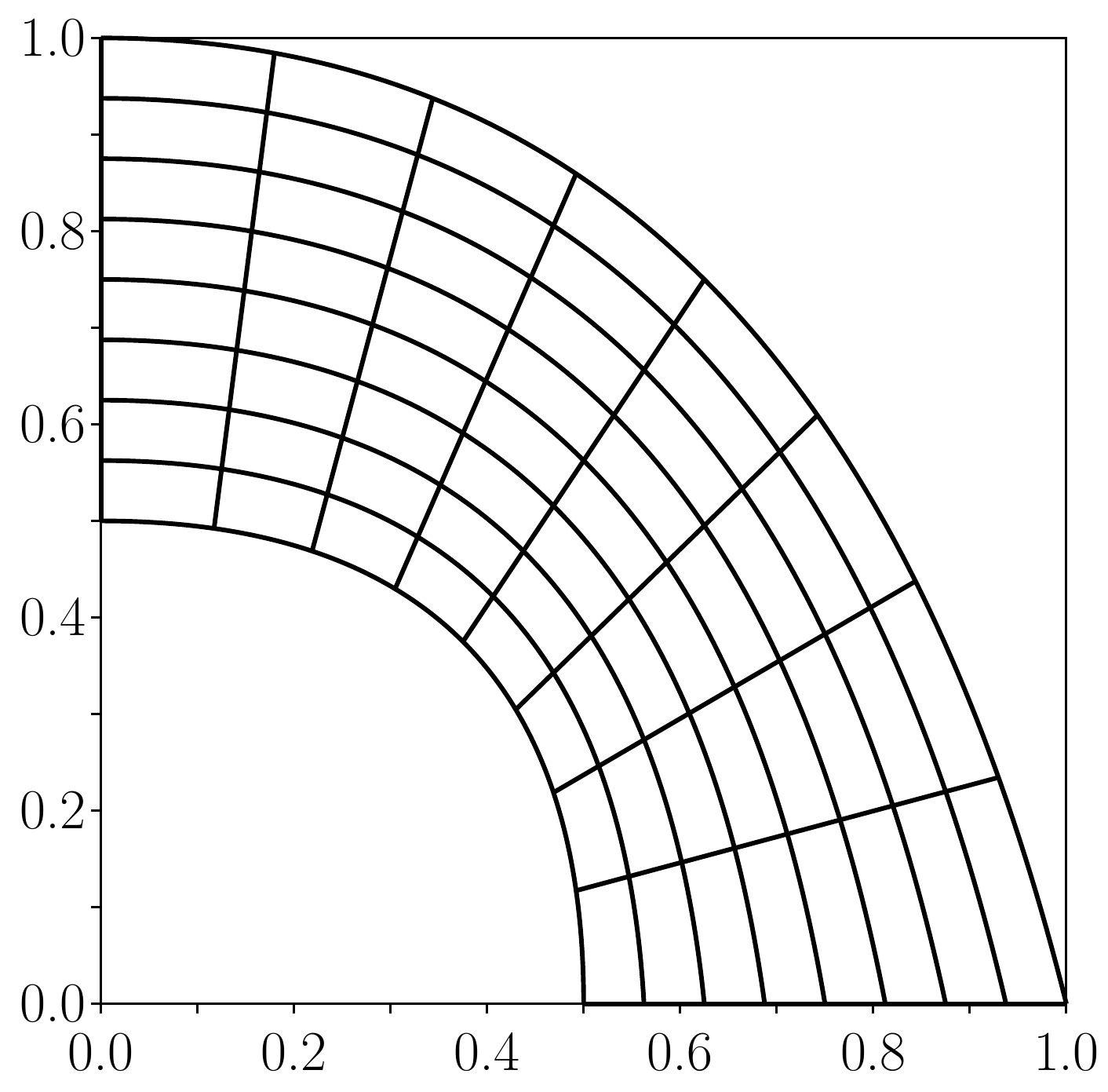}
    \qquad
	\includegraphics[height=4cm]{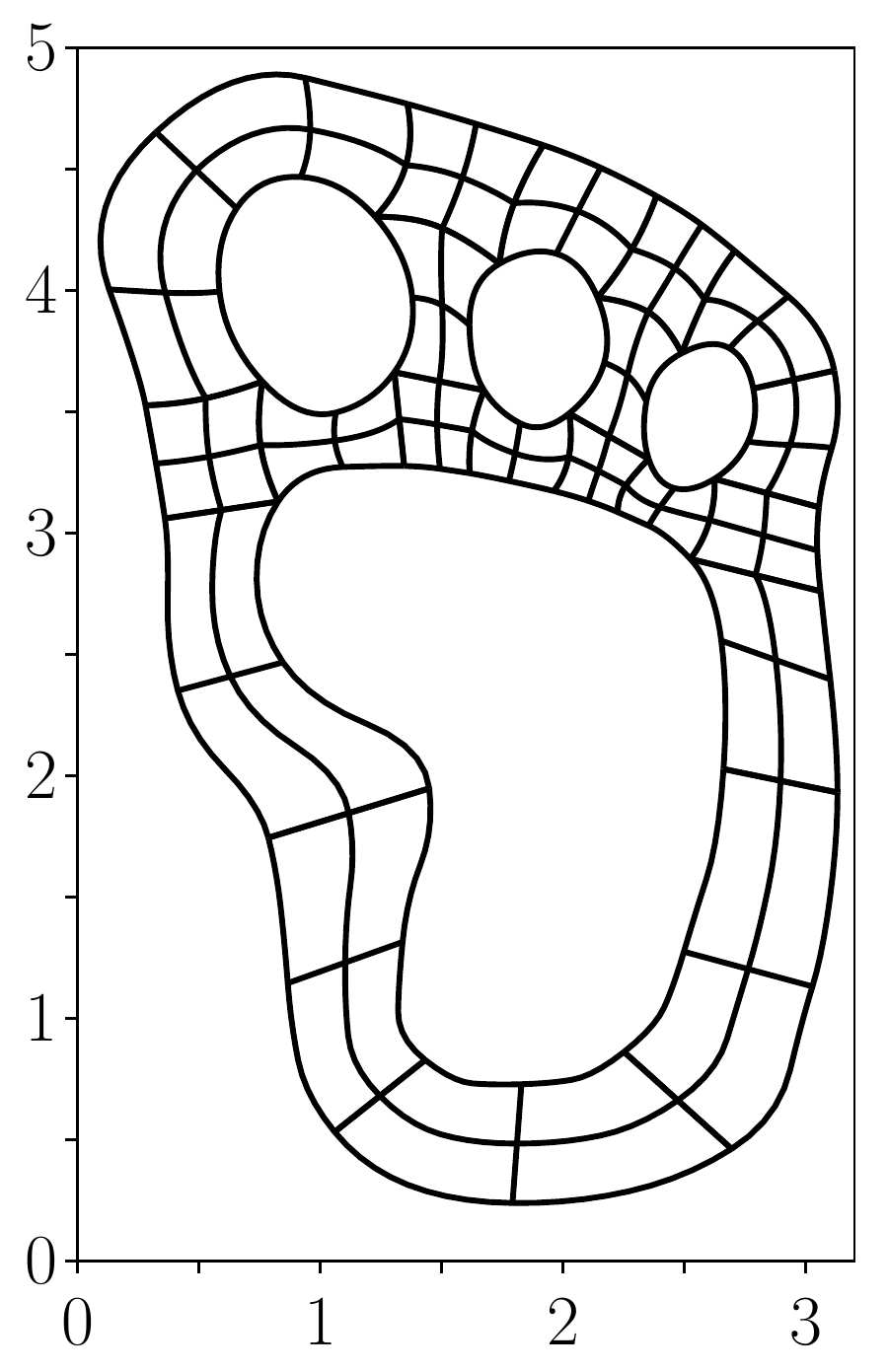}
	\caption{Domains: quarter annulus (left) and Yeti-footprint (right)}
	\label{fig:domain}
\end{figure}
On each patch, we obtain a discretization space by performing $\ell = 1,2,\ldots$ uniform refinement steps. The problem is discretized as outlined in Section~\ref{sec:3} and solved using the IETI-DP method proposed Section~\ref{sec:4}. The conjugate gradient solver is started using a random initial guess and stopped when the Euclidean norm of the residual vector is reduced by a factor of $\epsilon = 10^{-6}$ compared to the Euclidean norm of the initial residual vector. While preforming the conjugate gradient method, we also estimate the condition number based on the underlying Lanczos iteration. The local linear systems are solved using a sparse LU-solver. All experiments have been implemented using the G+Smo library\footnote{\url{https://github.com/gismo/gismo}} and have been performed on the Radon1 cluster\footnote{\url{https://www.ricam.oeaw.ac.at/hpc/}} in Linz.
\begin{table}[th]
\scriptsize
	\newcolumntype{L}[1]{>{\raggedleft\arraybackslash\hspace{-1em}}m{#1}}
	\centering
	\renewcommand{\arraystretch}{1.25}
	\begin{tabular}{l|L{1em}L{1.8em}|L{1em}L{1.8em}|L{1em}L{1.8em}|L{1em}L{1.8em}|L{1em}L{1.8em}}
		\toprule
		\multicolumn{1}{l}{$\ell\;\;\diagdown\;\;\degree$ \hspace{-1.8em}\;}
		& \multicolumn{2}{c|}{2}
		& \multicolumn{2}{c|}{3}
		& \multicolumn{2}{c|}{4}
		& \multicolumn{2}{c|}{5}
		& \multicolumn{2}{c}{6} \\
		& it & $\kappa$
		& it & $\kappa$
		& it & $\kappa$
		& it & $\kappa$
		& it & $\kappa$ \\
		\midrule
		$2$  & $17$ & $7.3$  & $17$ & $8.2$  & $17$ & $8.5$  & $17$ & $9.3$  & $16$ & $9.3 $ \\
		$3$  & $18$ & $8.7$  & $19$ & $9.8$  & $19$ & $10.3$ & $18$ & $10.9$ & $18$ & $11.0$ \\
		$4$  & $20$ & $10.2$ & $20$ & $11.4$ & $20$ & $11.7$ & $20$ & $12.9$ & $19$ & $12.7$ \\
		$5$  & $22$ & $12.7$ & $22$ & $13.8$ & $22$ & $14.3$ & $21$ & $14.7$ & $21$ & $15.6$ \\
		\bottomrule
	\end{tabular}
	\captionof{table}{Iteration counts (it) and condition numbers $\kappa$, quarter annulus
		\label{tab:QuarterAnnulus}}
\end{table}
\begin{table}[th]
\scriptsize
	\newcolumntype{L}[1]{>{\raggedleft\arraybackslash\hspace{-1em}}m{#1}}
	\centering
	\renewcommand{\arraystretch}{1.25}
	\begin{tabular}{l|L{1em}L{1.8em}|L{1em}L{1.8em}|L{1em}L{1.8em}|L{1em}L{1.8em}|L{1em}L{1.8em}}
		\toprule
		\multicolumn{1}{l}{$\ell\;\;\diagdown\;\;\degree$\hspace{-1.8em}\;}
		& \multicolumn{2}{c|}{2}
		& \multicolumn{2}{c|}{3}
		& \multicolumn{2}{c|}{4}
		& \multicolumn{2}{c|}{5}
		& \multicolumn{2}{c}{6} \\
		& it & $\kappa$
		& it & $\kappa$
		& it & $\kappa$
		& it & $\kappa$
		& it & $\kappa$ \\
		\midrule
		$2$  & $16$ & $ 7.9$ & $17$ & $ 8.8$ & $16$ & $ 9.7$ & $16$ & $10.3$ & $16$ & $10.9$ \\
		$3$  & $18$ & $ 9.6$ & $18$ & $10.1$ & $18$ & $11.6$ & $18$ & $12.1$ & $17$ & $12.2$ \\
		$4$  & $20$ & $11.6$ & $20$ & $12.8$ & $20$ & $13.7$ & $19$ & $14.4$ & $19$ & $14.9$ \\
		$5$  & $22$ & $13.7$ & $22$ & $14.9$ & $22$ & $15.9$ & $21$ & $16.6$ & $21$ & $17.4$ \\
		\bottomrule
	\end{tabular}
	\captionof{table}{Iteration counts (it) and condition numbers $\kappa$, Yeti-footprint
		\label{tab:YetiFoot}}
\end{table}

We present the iteration counts and the condition numbers in Tables~\ref{tab:QuarterAnnulus} (Quarter annulus) and~\ref{tab:YetiFoot} (Yeti-footprint). Following from the Table, we see that the iteration counts and the condition numbers grow about linearly in $\ell$, which corresponds to a growth like $\log\frac{H_k}{h_k}$, which is slower than $(\log\frac{H_k}{h_k})^2$, predicted by the theory. The growth in the spline degree parameter~$\degree$ seems to be linear or sublinear. Here, the theory does not tell the complete story since we do not actually know how the inf-sup constant depends on the spline degree. For both domains, the condition numbers and the iterations counts are very satisfactory, keeping in mind the results from Subsection~\ref{subsec:3:5}.

\subsection{Flow through a rectangle with an obstacle}
\label{subsec:6:2}

In this section, we consider a stationary flow through a rectangle with a circular hole.
This domain consists 11 patches, see Figure~\ref{fig:rectangle}. The first four patches, which are adjacent to the circle are parameterized using NURBS. The remaining patches are parameterized using a standard affine mapping, represented as tensor-product B-spline mappings. Even though some patches are parameterized NURBS, we use tensor-product B-splines to set up the discrete function spaces.
Starting from a coarsest level with no interior knots, we perform $\ell= 1,2,3,4,5$ uniform refinements to obtain the grid used for the simulation.

\begin{figure}[htb]
	\centering
	\includegraphics[height=2.175cm]{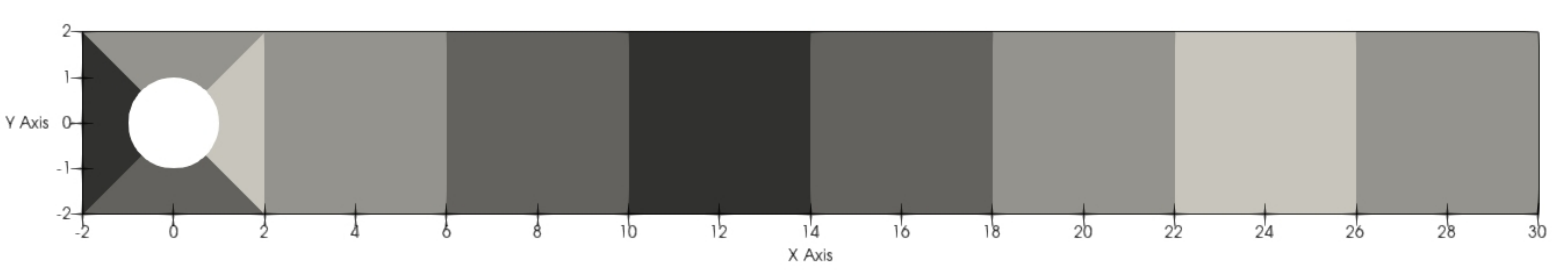}
    \caption{Rectangular domain with a circular hole}
	\label{fig:rectangle}
\end{figure}

The differential equation is set up as follows. We have a zero source term, $\mathbf f=0$, and the boundary conditions are chosen as follows. For $(-2,30)\times\{-2\}$ and $(-2,30)\times\{2\}$, we choose a noslip condition, i.e., $\mathbf{u} = (0,0)$. On $\{-2\}\times(-2,2)$, we use an inlet boundary condition by setting
\[
	\mathbf{u}(-2,y) = \left(\sin\left(\pi \frac{2+y}{4}\right),0\right)^\top \quad \text{for}\quad y\in(-2,2).
\]
On $\{30\}\times(-2,2)$, we choose an outlet boundary condition, that is, we use the homogeneous Neumann condition
\[
\nabla \mathbf{u}\cdot \mathbf{n}+p\mathbf{n} =0.
\]
This problem does not coincide with our model problem~\eqref{eq:stokes} as we have a Neumann boundary condition. Using the Neumann condition, the average of the pressure is uniquely solvable in $L^2(\Omega)$. So, we omit the condition on the average pressure.
We solve the resulting system as proposed in Section~\ref{sec:4}. The only difference is that we now do not average the overall pressure, that is, we no longer enforce $C_\Pi\, \underline{\mathbf x}_\Pi=0$.

\begin{table}[th]
\scriptsize
	\newcolumntype{L}[1]{>{\raggedleft\arraybackslash\hspace{-1em}}m{#1}}
	\centering
	\renewcommand{\arraystretch}{1.25}
	\begin{tabular}{l|L{1em}L{1.8em}|L{1em}L{1.8em}|L{1em}L{1.8em}|L{1em}L{1.8em}|L{1em}L{1.8em}}
		\toprule
		\multicolumn{1}{l}{$\ell\;\;\diagdown\;\;\degree$\hspace{-1.8em}\;}
		& \multicolumn{2}{c|}{2}
		& \multicolumn{2}{c|}{3}
		& \multicolumn{2}{c|}{4}
		& \multicolumn{2}{c|}{5}
		& \multicolumn{2}{c}{6} \\
		& it & $\kappa$
		& it & $\kappa$
		& it & $\kappa$
		& it & $\kappa$
		& it & $\kappa$ \\
		\midrule
		$2$  & $11$ & $ 4.4$ & $11$ & $ 5.3$ & $11$ & $ 6.0$ & $12$ & $ 6.6$ & $12$ & $7.1$ \\
		$3$  & $12$ & $ 6.0$ & $12$ & $ 6.9$ & $13$ & $ 7.7$ & $13$ & $ 8.4$ & $13$ & $9.0$ \\
		$4$  & $13$ & $ 7.7$ & $13$ & $ 8.8$ & $13$ & $ 9.6$ & $13$ & $10.4$ & $14$ & $11.1$ \\
		$5$  & $14$ & $ 9.6$ & $14$ & $10.8$ & $14$ & $11.9$ & $14$ & $12.7$ & $14$ & $13.5$ \\
		\bottomrule
	\end{tabular}
	\captionof{table}{Iteration counts (it) and condition numbers $\kappa$, rectangle domain
	  \label{tab:Rectangle}}
\end{table}
    In Table~\ref{tab:Rectangle}, we present iteration counts and estimated condition numbers for various refinement levels $\ell$ and spline degrees $\degree$. These numbers behave similar to those presented in Tables~\ref{tab:QuarterAnnulus} and~\ref{tab:YetiFoot}. In Figure~\ref{fig:flow}, we present a reconstruction of the solution for $\ell=4$ and $\degree = 2$. Note that we changed the sign of the pressure such that it is positive.
\begin{figure}[htb] 
	\centering
	\includegraphics[height=3.2cm]{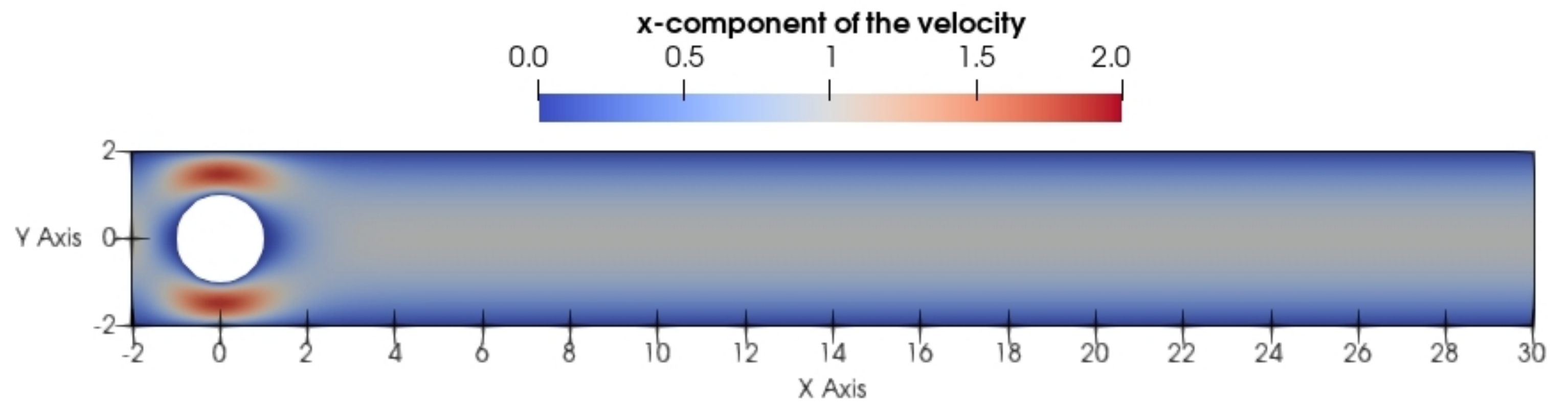}
    \qquad
	\includegraphics[height=3.2cm]{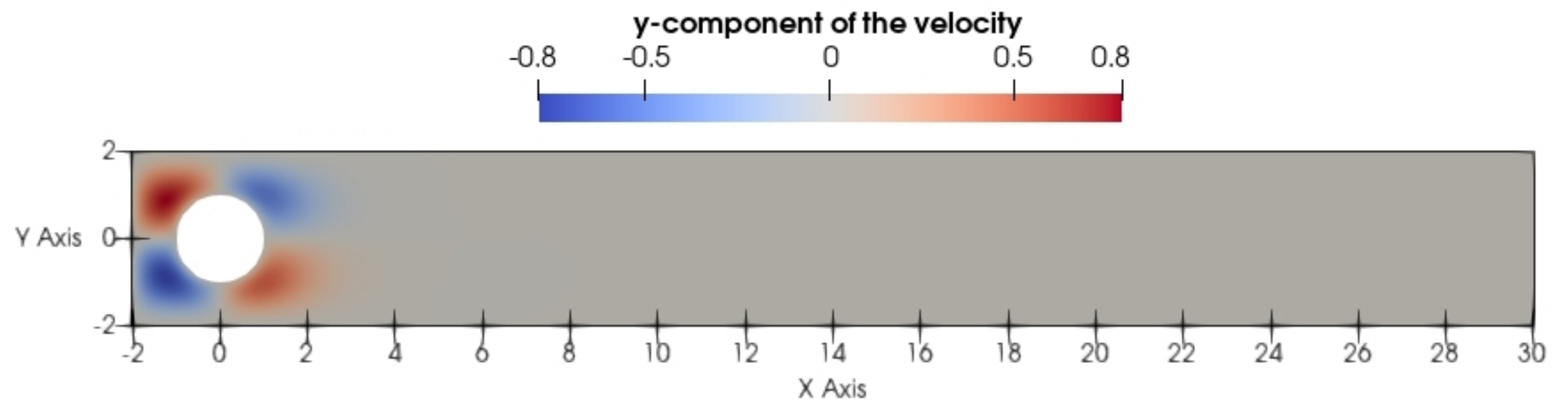}
    \qquad
	\includegraphics[height=3.2cm]{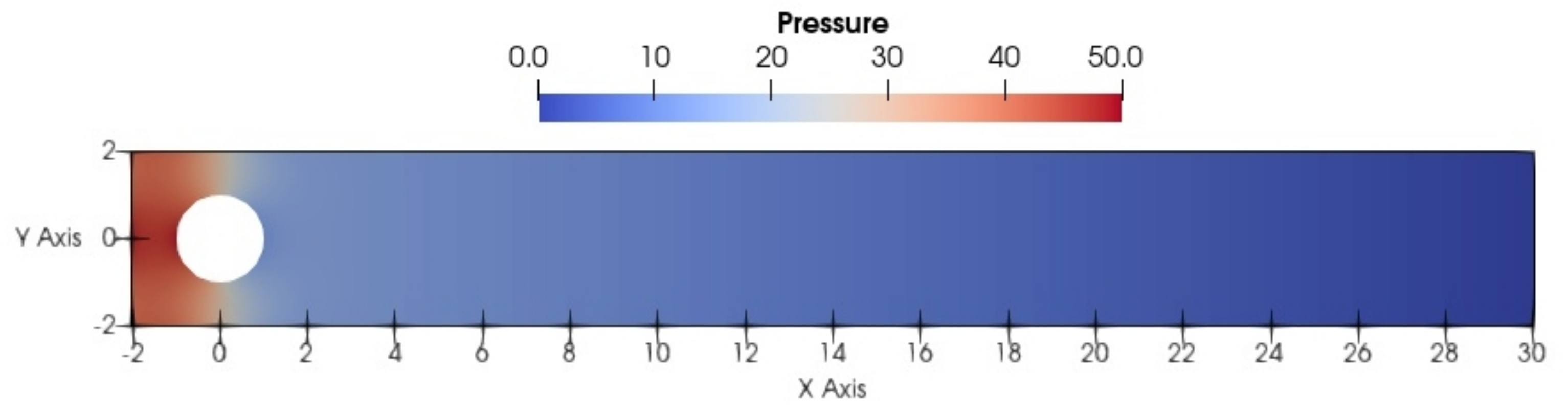}
	\caption{Reconstructed solution for $\ell=4$ and $\degree = 2$}
	\label{fig:flow}
\end{figure}

\setcounter{section}{0}
\renewcommand{\thesection}{\Alph{section}} 

\section{Appendix}
\label{sec:localCorrectionOperator}

Before we give a proof of Lemma~\ref{lamma:FortinOp}, we construct an
operator~$\mathbf{\Pi}_{\widetilde F}$ that
is similar to the Fortin operator. This construction only requires a few basis functions
per patch and is best understood as a variation of well-known techniques in the area
of finite elements, where the patches $\Omega^{(k)}$ play the role of elements.

\begin{lemma}\label{lem:localCorrectionOperator}
	There exists an operator $\mathbf{\Pi}_{\widetilde F}: [H^1_0(\Omega)]^2\rightarrow \mathbf{V}$ such that
	\begin{equation}\label{eq:pi2:stab}
				|\mathbf{\Pi}_{\widetilde F} \mathbf{u}|_{H^1(\Omega)}^2
				\le \widetilde c_F
				\sum_{k=1}^K \big( H_k^{-2} \|\mathbf{u}\|_{L^2(\Omega^{(k)})}^2 + |\mathbf{u}|_{H^1(\Omega^{(k)})}^2 \big)
				\quad\forall\,\mathbf{u}\in [H^1_0(\Omega)]^2
	\end{equation}
	and
	\begin{equation}\label{eq:pi2:divpres0}
				(\nabla \cdot (I-\mathbf{\Pi}_{\widetilde F})\mathbf{u}, p_1)_{L^2(\Omega)}=0
				\quad\forall\,	\mathbf{u}\in [H^1_0(\Omega)]^2
				\quad \mbox{and}\quad \forall\,p_1\in Q_1,
	\end{equation}
	hold, where $\widetilde c_F>0$ only depends on the constants
	from Assumptions~\ref{ass:nabla} and~\ref{ass:normals}.
\end{lemma}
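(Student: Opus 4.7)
\begin{proofof}{of Lemma~\ref{lem:localCorrectionOperator} (plan)}
The plan is to build $\mathbf{\Pi}_{\widetilde F}$ as a linear combination of one edge bubble per interior interface, with coefficients chosen to match the normal flux across that interface. The starting observation is that, because $p_1\in Q_1$ is constant with value $p_1^{(k)}$ on $\Omega^{(k)}$ and because both $\mathbf u$ and $\mathbf{\Pi}_{\widetilde F}\mathbf u$ vanish on $\partial \Omega$, patchwise integration by parts applied to $\mathbf v := \mathbf u - \mathbf{\Pi}_{\widetilde F}\mathbf u$ gives
\[
(\nabla\cdot \mathbf v, p_1)_{L^2(\Omega)}
= \sum_{k=1}^{K} p_1^{(k)} \int_{\partial\Omega^{(k)}} \mathbf v\cdot \mathbf n^{(k)}\, \mathrm{d}s
= \sum_{\{k,\ell\}} \bigl(p_1^{(k)}-p_1^{(\ell)}\bigr) \int_{\Gamma^{(k,\ell)}} \mathbf v\cdot \mathbf n^{(k)}\, \mathrm{d}s,
\]
so~\eqref{eq:pi2:divpres0} reduces to preserving $\int_{\Gamma^{(k,\ell)}}\mathbf u\cdot\mathbf n^{(k)}\,\mathrm ds$ on every interface.

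Next, for each interface $\Gamma^{(k,\ell)}$ I construct $\mathbf b^{(k,\ell)}\in\mathbf V$, supported in $\overline{\Omega^{(k)}}\cup\overline{\Omega^{(\ell)}}$ and vanishing on $\partial\Omega$ and on every other interface, such that $\int_{\Gamma^{(k,\ell)}}\mathbf b^{(k,\ell)}\cdot \mathbf n^{(k)}\,\mathrm ds = 1$ and $|\mathbf b^{(k,\ell)}|_{H^1(\Omega)}^2 \lesssim H_k^{-2}$. I start from the biquadratic parameter bubble $\hat{\mathbf b}(\hat x_1,\hat x_2) = (\hat x_1\,\hat x_2(1-\hat x_2),\,0)$, which belongs to $\widehat{\mathbf V}^{(k)}$ (see Remark~\ref{remark:4}) and vanishes on three of the four parameter edges. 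Pulling back via $\mathbf G_k$ and $\mathbf G_\ell$ and choosing the parameter edges that correspond to $\Gamma^{(k,\ell)}$ on both sides (possible by Assumption~\ref{assumption:conforming}) yields a globally continuous function in $\mathbf V$. Using Assumption~\ref{ass:nabla}, the usual chain-rule argument shows that the unnormalized bubble has $|\,\cdot\,|_{H^1(\Omega^{(k)})}^2 \lesssim 1$ and its flux through $\Gamma^{(k,\ell)}$ is bounded by $C H_k$; Assumption~\ref{ass:normals} is used to show that this flux is bounded from below in magnitude by $c H_k$ as well (the outer normal does not reverse direction, so the normal component cannot average out). Rescaling by a factor of order $H_k^{-1}$ gives the desired normalization and $H^1$-bound.

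I then set
\[
\mathbf{\Pi}_{\widetilde F}\mathbf u := \sum_{\Gamma^{(k,\ell)}} \alpha_{k,\ell}(\mathbf u)\,\mathbf b^{(k,\ell)},
\qquad
\alpha_{k,\ell}(\mathbf u) := \int_{\Gamma^{(k,\ell)}} \mathbf u\cdot \mathbf n^{(k)}\, \mathrm ds.
\]
By the flux property of the bubbles, $\mathbf{\Pi}_{\widetilde F}\mathbf u$ has the same interface fluxes as $\mathbf u$, and the first step of the plan delivers~\eqref{eq:pi2:divpres0}. For~\eqref{eq:pi2:stab} I use Cauchy--Schwarz and the scaled trace inequality (which follows from~\eqref{eq:geoequiv} and the standard trace estimate on $\widehat\Omega$),
\[
|\alpha_{k,\ell}(\mathbf u)|^2 \le |\Gamma^{(k,\ell)}|\,\|\mathbf u\|_{L^2(\Gamma^{(k,\ell)})}^2
\lesssim \|\mathbf u\|_{L^2(\Omega^{(k)})}^2 + H_k^2\,|\mathbf u|_{H^1(\Omega^{(k)})}^2,
\]
combine with $|\mathbf b^{(k,\ell)}|_{H^1(\Omega)}^2 \lesssim H_k^{-2}$, and sum over interfaces using Assumption~\ref{ass:neighbors} to control the (finite) number of bubbles whose support meets a given patch.

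The main obstacle is the second paragraph: ensuring that the physical flux of the unnormalized bubble through its target interface is bounded from below by $c\,H_k$. Without Assumption~\ref{ass:normals}, the outer normal could rotate far enough along $\Gamma^{(k,\ell)}$ that the normal component of a sign-definite parameter bubble integrates to (near) zero, and no uniform rescaling with the required $H^1$-bound would be possible. Aside from this geometric subtlety, every other ingredient of the proof is a routine scaling argument under Assumptions~\ref{ass:nabla} and~\ref{ass:quasiuniform}.
\end{proofof}
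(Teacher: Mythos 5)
Your overall strategy is the one the paper uses: one biquadratic edge bubble per interior interface, normalized so that its normal flux through that interface is $1$, with coefficients $\alpha_{k,\ell}(\mathbf u)=\int_{\Gamma^{(k,\ell)}}\mathbf u\cdot\mathbf n^{(k)}\,\mathrm ds$, followed by patchwise integration by parts for \eqref{eq:pi2:divpres0} and a scaled trace inequality for \eqref{eq:pi2:stab}. The scaling computation for the stability bound and the reduction of \eqref{eq:pi2:divpres0} to flux preservation are both correct.

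However, there is a genuine flaw in the construction of the bubble itself. You take $\hat{\mathbf b}=(\hat x_1\hat x_2(1-\hat x_2),\,0)$, which under the (componentwise) pull-back of this paper becomes a physical vector field pointing in the \emph{fixed coordinate direction} $(1,0)$. Its flux through $\Gamma^{(k,\ell)}$ is $\int_{\Gamma^{(k,\ell)}}\varphi\, n_1^{(k)}\,\mathrm ds$, and this is \emph{not} bounded below by $c\,H_k$ in general: for an interface whose outer normal is close to $(0,\pm1)$ (e.g.\ two vertically stacked patches with a nearly horizontal interface, which certainly satisfies Assumption~\ref{ass:normals}), the flux is close to zero even though the normal never rotates at all. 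Assumption~\ref{ass:normals} controls the variation of $\mathbf n^{(k)}$ \emph{along} the interface; it says nothing about its orientation relative to the coordinate axes, so your parenthetical justification (``the normal component cannot average out'') does not apply to your bubble. The normalization would then require a rescaling factor far larger than $H_k^{-1}$ and the $H^1$ bound would break. The fix — and what the paper does — is to direct the bubble along the normal at the reference point of Assumption~\ref{ass:normals}, i.e.\ $\psi^{(k,\ell)}(x)=\widehat\varphi(\mathbf G_k^{-1}(x))\,\mathbf n^{(k)}(\overline x^{(k,\ell)})$ on $\Omega^{(k)}$ and $\psi^{(k,\ell)}=-\psi^{(\ell,k)}$ on $\Omega^{(\ell)}$ (the sign flip keeps the function continuous because $\mathbf n^{(\ell)}=-\mathbf n^{(k)}$); then $\psi^{(k,\ell)}\cdot\mathbf n^{(k)}(x)=\widehat\varphi\,\mathbf n^{(k)}(\overline x^{(k,\ell)})\cdot\mathbf n^{(k)}(x)\ge C_4\widehat\varphi\ge0$ pointwise and the lower bound $|(\psi^{(k,\ell)}\cdot\mathbf n^{(k)},1)_{L^2(\Gamma^{(k,\ell)})}|\gtrsim H_k$ follows. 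Two minor further remarks: the number of bubbles touching a patch is bounded by $4$ (each patch has four edges), so Assumption~\ref{ass:neighbors} is not needed here, and neither is Assumption~\ref{ass:quasiuniform} — consistent with the lemma's claim that $\widetilde c_F$ depends only on Assumptions~\ref{ass:nabla} and~\ref{ass:normals}.
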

\begin{proof}
	For each interface $\Gamma^{(k,\ell)}$ with pre-image $\widehat{\Gamma}^{(k,\ell)}:=
	\mathbf G_k^{-1}(\Gamma^{(k,\ell)})$,
	we define a function $\psi^{(k,\ell)}$ as follows. 
	If $\widehat{\Gamma}^{(k,\ell)}=\{1\}\times[0,1]$, we
	define $\psi^{(k,\ell)}$ on patch $\Omega^{(k)}$ by
	\begin{equation}\label{eq:corrfunc1}
			\psi^{(k,\ell)}( x ) = 
				\widehat\varphi(\mathbf G_k^{-1}(x))\,
					\mathbf n^{(k)}(\overline{x}^{(k,\ell)})
			\quad\mbox{with}\quad
			\widehat\varphi(\xi_1,\xi_2):=			
			\xi_1 \xi_2 (1-\xi_2),
	\end{equation}
	where $\mathbf n^{(k)}$ and $\overline{x}^{(k,\ell)}$ are as in Assumption~\ref{ass:normals}.
	If $\widehat{\Gamma}^{(k,\ell)}$ is one of the other sides, the
	function $\widehat\varphi$ is rotated around the center of the unit square
	such that it is non-zero on $\widehat{\Gamma}^{(k,\ell)}$ and zero
	on the other sides. On the patch $\Omega^{(\ell)}$, we define
	\begin{equation}\label{eq:corrfunc2}
			\psi^{(k,\ell)}:=-\psi^{(\ell,k)}.
	\end{equation}
	On all other patches, we set $\psi^{(k,\ell)}:=0$.
	Note that this construction guarantees that $\psi^{(k,\ell)}$ is continuous. (The negative sign in~\eqref{eq:corrfunc2} is due to
	$\mathbf n^{(\ell)}=-\mathbf n^{(k)}$.)
	Since the spaces $\widehat{\mathbf V}^{(k)}$ and $\widehat{\mathbf V}^{(\ell)}$
	contain quadratic functions, they contain $\widehat\varphi$. Thus,
	we have $\psi^{(k,\ell)}\in \mathbf V$.
	
	By this construction, we obtain
	\begin{equation}\label{eq:40a}
			(\psi^{(k,\ell)} \cdot \mathbf n^{(k)},1)_{L^2(\Gamma^{(k,\ell)})}\not=0
	\end{equation}
	and
	\begin{equation}\label{eq:local}
	\psi^{(k,\ell)}|_{\Gamma^{(r,s)}}=0
	\quad\mbox{for}\quad
	\{k,\ell\}\not=\{r,s\}.
	\end{equation}
	Next, we define the projector $\mathbf{\Pi}_{\widetilde F}$.
	Let $\mathbf u$ be arbitrary but fixed. We define
	\[
			\mathbf{\Pi}_{\widetilde F} \mathbf u :=
			\sum_{k=1}^K
			\sum_{\ell\in \mathcal N_\Gamma(k), \ell>k}
			\frac{(\mathbf u \cdot \mathbf n^{(k)},1)_{L^2(\Gamma^{(k,\ell)})}}
			{(\psi^{(k,\ell)} \cdot \mathbf n^{(k)},1)_{L^2(\Gamma^{(k,\ell)})}}
			\psi^{(k,\ell)}.
	\]	
	Using this definition,~\eqref{eq:40a} and~\eqref{eq:local}, we immediately
	obtain
	\[
		((\mathbf u-\mathbf{\Pi}_{\widetilde F}\mathbf u) \cdot \mathbf n^{(k)},1)_{L^2(\Gamma^{(k,\ell)})}
			 = 0
	\]
	for all $k$ and all $\ell\in\mathcal N_\Gamma(k)$. This
	immediately yields
	\[
				((\mathbf u-\mathbf{\Pi}_{\widetilde F}\mathbf u) \cdot \mathbf n^{(k)},1)_{L^2(\partial\Omega^{(k)})}
				=0,
	\]
	and by integration by parts further~\eqref{eq:pi2:divpres0}, which
	finishes the first part of the proof.
	Next, we estimate the $H^1$-seminorm of $\mathbf{\Pi}_{\widetilde F}u$.
	Using the triangle inequality, $|\mathcal N_\Gamma(k)|\le 4$
	and the bounded support of $\psi^{(k,\ell)}=-\psi^{(\ell,k)}$, we obtain
	\begin{equation}\label{eq:append1}
	\begin{aligned}
		|\mathbf{\Pi}_{\widetilde F}\mathbf u|_{H^1(\Omega)}^2
		&=
		\left|
			\sum_{k=1}^K
			\sum_{\ell\in \mathcal N_\Gamma(k), \ell>k}
			\frac{(\mathbf u \cdot \mathbf n^{(k)},1)_{L^2(\Gamma^{(k,\ell)})}}
			{(\psi^{(k,\ell)} \cdot \mathbf n^{(k)},1)_{L^2(\Gamma^{(k,\ell)})}}
			\psi^{(k,\ell)}
		\right|_{H^1(\Omega)}^2\\
		&\le 4
		\sum_{k=1}^K
			\sum_{\ell\in \mathcal N_\Gamma(k)}
			\frac{(\mathbf u \cdot \mathbf n^{(k)},1)_{L^2(\Gamma^{(k,\ell)})}^2}
			{(\psi^{(k,\ell)} \cdot \mathbf n^{(k)},1)_{L^2(\Gamma^{(k,\ell)})}^2}
    \left|
			\psi^{(k,\ell)}
		\right|_{H^1(\Omega^{(k)})}^2.
	\end{aligned}
	\end{equation}
	In the remainder of this proof, we write $a\lesssim b$ (or $b\gtrsim a$) if
	there is a constant $c>0$ that only depends on the constants from
	Assumptions~\ref{ass:nabla} and \ref{ass:normals} such that $a\le c\,b$. 
	Using Assumptions~\ref{ass:nabla} and \ref{ass:normals} and~\eqref{eq:corrfunc1}, we obtain
	\begin{equation}\label{eq:append2}
	\begin{aligned}
			&|(\psi^{(k,\ell)} \cdot \mathbf n^{(k)},1)_{L^2(\Gamma^{(k,\ell)})}|
			 \gtrsim
			H_k
			|(\widehat \varphi,1)_{L^2(\widehat\Gamma^{(k,\ell)})}|
			=H_k\sqrt{1/30}.
	\end{aligned}
	\end{equation}
	Using~\eqref{eq:geoequiv} and~\eqref{eq:corrfunc1}, we also obtain
	\begin{equation}\label{eq:append3}
			|\psi^{(k,\ell)} |_{H^1(\Omega^{(k)})}
			 \lesssim
			| \widehat \varphi |_{H^1(\widehat\Omega)}
			=\sqrt{13/90}.
	\end{equation}
		Using a combination of~\eqref{eq:append1}, \eqref{eq:append2}
		and~\eqref{eq:append3}, the Cauchy-Schwarz inequality,
		Assumption~\ref{ass:nabla}, $\|1\|_{L^2(\Gamma^{(k,\ell)})}^2 \lesssim H_k \|1\|_{L^2(\widehat\Gamma^{(k,\ell)})}^2$, and a standard estimate for the trace yield 
	\begin{align*}
		&|\mathbf{\Pi}_{\widetilde F}\mathbf u|_{H^1(\Omega)}^2
		\lesssim
		\sum_{k=1}^K
			\sum_{\ell\in \mathcal N_\Gamma(k)}
			H_k^{-2}\,
			(\mathbf u \cdot \mathbf n^{(k)},1)_{L^2(\Gamma^{(k,\ell)})}^2
		\\&\quad\lesssim
		\sum_{k=1}^K
			\sum_{\ell\in \mathcal N_\Gamma(k)}
			H_k^{-1}\,
			\| \mathbf u \|_{L^2(\Gamma^{(k,\ell)})}^2
			\lesssim
		\sum_{k=1}^K
			\sum_{\ell\in \mathcal N_\Gamma(k)}
			\| \mathbf u\circ\mathbf G_k \|_{L^2(\widehat\Gamma^{(k,\ell)})}^2
		\\& \quad \lesssim
			\sum_{k=1}^K
			(
			| \mathbf u\circ\mathbf G_k |_{H^1(\widehat\Omega^{(k)})}^2+
			\| \mathbf u\circ\mathbf G_k \|_{L^2(\widehat\Omega^{(k)})}^2)
	\end{align*}
	The estimate \eqref{eq:geoequiv} finishes the proof.
\end{proof}

Note that the operator from Lemma~\ref{lem:localCorrectionOperator} fails to meet the
conditions for a Fortin operator due to the additional term of the form
$H_k^{-2}\|\mathbf u\|_{L^2(\Omega^{(k)})}^2$. For the proof of the desired error
bound, we need an approximation error estimate that only needs to decrease with
the patch size $H_k$, not with the grid size $h_k$. For the construction of such
an approximation error estimate, we use a Scott-Zhang operator, which relies on
Poincaré estimates. In the following, we verify that the Poincaré constant for
the subdomains $\mathcal S^{(j)}$ from~\eqref{eq:Sjdef} only depends on the
constants from Assumptions~\ref{ass:neighbors} and~\ref{ass:nabla}.
\begin{lemma}\label{lem:poincare}
		For all $j=1,\ldots,J$, we have
		\[
		  \inf_{c\in\mathbb R} \|u-c\|_{L^2(\mathcal S^{(j)})}
		    \le \widetilde c_p\;
		    \mathrm{diam}\, \mathcal S^{(j)}\; |u|_{H^1(\mathcal S^{(j)})}
		    \quad\foralls u\in H^1_0(\Omega),
		\]
		where
		\begin{equation}\label{eq:Sjdef}
		\mathcal S^{(j)}:=
			\bigcup_{k\in\mathcal N_x(j)} \overline{ \Omega^{(k)} },
			\qquad j=1,\ldots,J.
		\end{equation}
		and $\widetilde c_p$ only depends on the constants from
		the Assumptions~\ref{ass:neighbors} and~\ref{ass:nabla}.
\end{lemma}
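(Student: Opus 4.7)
The plan is to derive the Poincaré estimate on $\mathcal{S}^{(j)}$ by a standard chaining argument: apply the patchwise Poincaré inequality~\eqref{eq:poincare} on each $\Omega^{(k)}$ with $k\in\mathcal N_x(j)$, and then compare the patchwise averages with each other by passing through shared edges. The key observation is that $\mathrm{diam}\,\mathcal{S}^{(j)}$ is comparable to $H:=\max_{k\in\mathcal N_x(j)} H_k$ (trivially from below, and from above by $\sum_k 2H_k$ since all these patches contain the common vertex $x_j$), and by Assumption~\ref{ass:neighbors} only a uniformly bounded number $|\mathcal N_x(j)|\le C_2$ of patches are involved.

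First I would set $c_k := |\Omega^{(k)}|^{-1}\int_{\Omega^{(k)}} u$ for every $k\in\mathcal N_x(j)$, and note that~\eqref{eq:poincare} (whose proof actually only requires $u\in H^1(\Omega^{(k)})$) gives
\[
\|u-c_k\|_{L^2(\Omega^{(k)})}^2\le c_G^2\,\widehat c_P^{\,2}\,H_k^2\,|u|_{H^1(\Omega^{(k)})}^2.
\]
Next, I would bound $|c_k-c_\ell|$ whenever $\Omega^{(k)}$ and $\Omega^{(\ell)}$ share an edge $\Gamma^{(k,\ell)}$. Pulling back to $\widehat\Omega$, the standard trace inequality on the unit square gives $\|v-\bar v\|_{L^2(\widehat\Gamma)}^2\lesssim |v|_{H^1(\widehat\Omega)}^2$; applying this to $v=u\circ \mathbf G_k$, using~\eqref{eq:geoequiv} together with the patchwise Poincaré bound, and the fact that $|\Gamma^{(k,\ell)}|\asymp H_k$ (from Assumption~\ref{ass:nabla}), one obtains $|c_k-\bar u_{k,\ell}|^2\lesssim |u|_{H^1(\Omega^{(k)})}^2$, where $\bar u_{k,\ell}$ is the edge average. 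The analogous bound on $\Omega^{(\ell)}$ and the triangle inequality then yield
\[
|c_k-c_\ell|^2\lesssim |u|_{H^1(\Omega^{(k)})}^2+|u|_{H^1(\Omega^{(\ell)})}^2.
\]
For arbitrary $k,\ell\in\mathcal N_x(j)$, I would chain this bound along a path of patches within $\mathcal N_x(j)$ in which consecutive patches share an edge. The triangle inequality and Assumption~\ref{ass:neighbors} then give $|c_k-c_\ell|^2\lesssim |u|_{H^1(\mathcal S^{(j)})}^2$ with a constant depending only on $C_2$.

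Finally, fixing a reference index $k_\star\in\mathcal N_x(j)$ (say, the one with the largest $H_{k_\star}$), I would estimate
\[
\inf_{c\in\mathbb R}\|u-c\|_{L^2(\mathcal S^{(j)})}^2
\le \|u-c_{k_\star}\|_{L^2(\mathcal S^{(j)})}^2
\le 2\sum_{k\in\mathcal N_x(j)}\bigl(\|u-c_k\|_{L^2(\Omega^{(k)})}^2+|\Omega^{(k)}|\,|c_k-c_{k_\star}|^2\bigr),
\]
and the two previous steps together with $|\Omega^{(k)}|\lesssim H_k^2\le H^2\lesssim (\mathrm{diam}\,\mathcal S^{(j)})^2$ yield the claimed bound, with a constant depending only on $c_G$, $\widehat c_P$ and $C_2$, hence only on the constants from Assumptions~\ref{ass:neighbors} and~\ref{ass:nabla}.

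The main obstacle I anticipate is the chaining step: one must justify that any two patches sharing the vertex $x_j$ can in fact be connected by a chain of patches in $\mathcal N_x(j)$ whose consecutive members share a full edge $\Gamma^{(k_i,k_{i+1})}$, not merely the point $x_j$. In two dimensions this follows from Assumption~\ref{ass:conforming} (no T-junctions), which forces the patches touching $x_j$ to form a cyclic "fan" around that vertex with every pair of neighbors in the fan sharing a full edge; once this geometric fact is recorded, the length of the chain is at most $|\mathcal N_x(j)|\le C_2$ and the constants remain uniform.
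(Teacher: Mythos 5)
Your proof is correct, but it follows a genuinely different route from the paper. The paper maps the patch fan $\mathcal S^{(j)}$ onto a fixed reference configuration $\widetilde{\mathcal S}$ of $N=|\mathcal N_x(j)|$ unit rhombi arranged around a common vertex, invokes a Poincar\'e inequality on $\widetilde{\mathcal S}$ whose constant depends only on $N\le C_2$, and pulls the estimate back via the uniformly bounded Jacobians of $\mathbf G_{k_n}$ and the linear maps $\mathbf T_n$ (Assumption~\ref{ass:nabla}). You instead avoid any reference configuration and argue by chaining: patchwise Poincar\'e for each mean $c_k$, comparison of $c_k$ and $c_\ell$ through the shared edge average via a trace estimate on $\widehat\Omega$ (with the minor bookkeeping that $c_k$ is the physical rather than the parametric mean, which your appeal to~\eqref{eq:geoequiv} and the patchwise Poincar\'e bound handles), and then a chain of length at most $C_2$ around the vertex. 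Both arguments rest on the same geometric fact -- that the patches in $\mathcal N_x(j)$ form a fan around $x_j$ in which consecutive members share a full edge, which you correctly trace back to Assumption~\ref{ass:conforming} and which the paper uses implicitly when it arranges the rhombi in counter-clockwise order. What your version buys is a fully constructive, traceable constant (in terms of $c_G$, $\widehat c_P$, the unit-square trace constant and $C_2$) without having to justify a Poincar\'e inequality on the composite reference domain $\widetilde{\mathcal S}$; what the paper's version buys is brevity, at the cost of delegating the $N$-dependence of $\widetilde c_P$ to a separate (typically compactness-based or itself chaining-based) argument. Your anticipated obstacle is exactly the right one, and your resolution of it is sound for the interior vertices considered here.
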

\begin{proof}
		Let $j$ be arbitrary but fixed. Let $N:=\mathcal N_x(j)$ be the
		number of patches adjacent to the vertex $x_j$.
		From Assumptions~\ref{ass:neighbors} and~\ref{ass:nabla}, we know $3\le N\le C_2$.
		Let $\Omega^{(k_1)},\ldots,\Omega^{(k_N)}$ be the patches adjacent
		to $x_j$, enumerated in counter-clockwise ordering. $\mathcal S^{(j)}$
		is the union of these patches. Let $\widetilde{\mathcal S}$ be a pre-image
		of $\mathcal S^{(j)}$ consisting of rhombi $\widetilde \Omega_n$, $n=1,\ldots,N$,
		of size $1$, arranged
		as depicted in Figure~\ref{fig:poincare}. Let 
		$\mathbf T_n:\widehat\Omega \rightarrow \widetilde \Omega_n$
		be the canonical linear maps with positive Jacobi-determinant 
		and such that $\mathbf G_{k_n}(\mathbf T_n^{-1}(\widetilde x_j))
		=x_j$, where $\widetilde{x}_j$ is the common vertex
		of the rhombi, see Figure~\ref{fig:poincare}.
		\begin{figure}[h]
	  \begin{center}
		\includegraphics{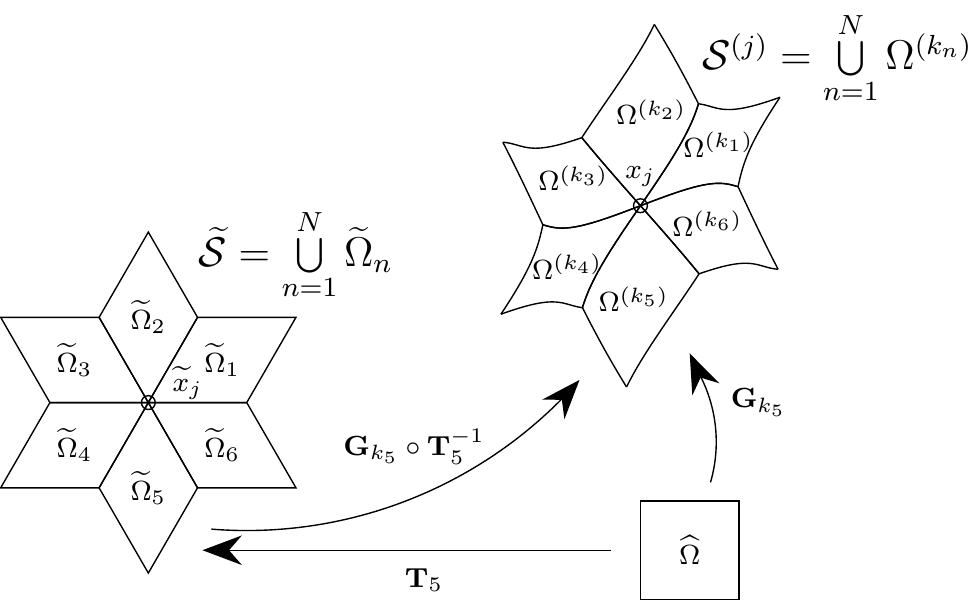}
	  \end{center}
	  \caption{\label{fig:poincare} $\widetilde{\mathcal S}$, the pre-image of $\mathcal S^{(j)}$
	  and corresponding mappings}
	  \end{figure}
		On $\widetilde{\mathcal S}$,  a Poincaré inequality holds
		\begin{equation}\label{eq:lem:poincare}
				\inf_{c\in\mathbb R} \|u-c\|_{L^2(\widetilde{\mathcal S})}
				\le
				\widetilde c_P
				|u|_{H^1(\widetilde{\mathcal S})}
				\quad\foralls u\in H^1(\widetilde{\mathcal S}),
		\end{equation}
		where the constant $\widetilde c_P$ only depends on $N$,
		which is well-bounded due to Assumption~\ref{ass:neighbors}.
		The statement~\eqref{eq:lem:poincare} can be transferred to
		$\mathcal S^{(j)}$ by applying the same arguments as
		for~\eqref{eq:poincare}. The
		constants only depend on the Jacobians of $\mathbf G_{k_n}$ and
		$\mathbf T_n$, which are bounded due to Assumptions~\ref{ass:neighbors}
		and~\ref{ass:nabla}. This finishes the proof.
\end{proof}

Let
\[
\mathbf{V}_1:=\{ \mathbf u \in [H^1_0(\Omega)]^2 : \mathbf u\circ \mathbf G_k \mbox{ bilinear for }k=1,\ldots,K \} \subset \mathbf{V}
.
\]
We choose the function values at the vertices $x_1,\ldots,x_J$, which are the corners of the patches not located on the (Dirichlet) boundary, as the degrees of freedom. Based on this choice of degrees of freedom and the corresponding nodal basis, we define a Scott-Zhang projector $\mathbf \Pi_{SZ} : [H^1_0(\Omega)]^2 \rightarrow \mathbf V_1$  (cf. Ref.~\refcite{ScottZhang:1990}).
Using the Poincaré inequalities~\eqref{eq:poincare} and Lemma~\ref{lem:poincare} and the Friedrich's inequality~\eqref{eq:friedrichs}, using the same arguments as in Ref.~\refcite{ScottZhang:1990} we obtain 
\begin{equation}\label{eq:pi1:stab}
		|\mathbf{\Pi}_{SZ} \mathbf{u}|_{H^1(\Omega)} \le  c_S\, |\mathbf{u}|_{H^1(\Omega)} \quad \forall\, \mathbf{u}\in [H^1_0(\Omega)]^2
\end{equation}
and
\begin{equation}\label{eq:pi1:approx3}
		\sum_{k=1}^K H_k^{-2} \|(I-\mathbf{\Pi}_{SZ}) \mathbf{u}\|_{L^2(\Omega^{(k)})}^2
		\leq c_S\,|\mathbf{u}|_{H^1(\Omega)}^2 \quad \forall\, \mathbf{u}\in [H^1_0(\Omega)]^2,
\end{equation}
where the constant $c_S>0$ only depends on the constants from the Assumptions~\ref{ass:neighbors} and~\ref{ass:nabla}.

We continue by giving a constructive proof for the existence of a Fortin operator.
\begin{proofof}{of Lemma \ref{lamma:FortinOp}}
We define $\mathbf{\Pi}:[H^1_0(\Omega)]^2\rightarrow \mathbf{V}$ as
\[
\mathbf{\Pi} := \mathbf{\Pi}_{SZ} + \mathbf{\Pi}_{\widetilde{F}} (I- \mathbf{\Pi}_{SZ}),
\]
where $\mathbf{\Pi}_{SZ}$ is the Scott-Zhang projector and $\mathbf{\Pi}_{\widetilde{F}}$ is the operator
from Lemma~\ref{lem:localCorrectionOperator}.
Using the triangle inequality and \eqref{eq:pi2:stab}, we obtain
	\begin{align*}
		&|\mathbf{\Pi} \mathbf{u}|_{H^1(\Omega)}^2 = |\mathbf{\Pi}_{SZ} \mathbf{u} + \mathbf{\Pi}_{\widetilde{F}}(I-\mathbf{\Pi}_{SZ})\mathbf{u}|_{H^1(\Omega)}^2 \\
		&  \le 2 |\mathbf{\Pi}_{SZ} \mathbf{u}|_{H^1(\Omega)}^2 + 2 |\mathbf{\Pi}_{\widetilde{F}}(I-\mathbf{\Pi}_{SZ})\mathbf{u}|_{H^1(\Omega)}^2 \\
		&  \le 2 |\mathbf{\Pi}_{SZ}\mathbf{u}|_{H^1(\Omega)}^2 + 2 c_{\widetilde{F}} \sum_{k=1}^K\left( H_k^{-2} \|(I-\mathbf{\Pi}_{SZ})\mathbf{u}\|_{L^2(\Omega^{(k)})}^2 + |(I-\mathbf{\Pi}_{SZ})\mathbf{u}|_{H^1(\Omega^{(k)})}^2 \right).
    \end{align*}
    We now use~\eqref{eq:pi1:stab} and~\eqref{eq:pi1:approx3} to show~\eqref{eq:pi:stabtmp}.
	It remains to show \eqref{eq:pi:divpres0}. We have
	\[
			(\nabla \cdot (I-\mathbf{\Pi}) \mathbf{u},p_1)_{L^2(\Omega)} = (\nabla \cdot (I-\mathbf{\Pi}_{\widetilde{F}})\underbrace{(I-\mathbf{\Pi}_{SZ}) \mathbf{u}}_{\displaystyle \mathbf{w}:=}, p_1)_{L^2(\Omega)} = 0
	\]
	for all $p_1\in Q_1$
	by applying \eqref{eq:pi2:divpres0} to $\mathbf{w}$, which concludes the proof.
\end{proofof}

\begin{proofof}{of Lemma~\ref{lem:supmat}}
	Since the statements~\eqref{eq:supmat0} and \eqref{eq:supmat1} can be found in the literature, cf. Ref.~\refcite{fortin1991mixed}, Chapter~II, §~1.1, we only show~\eqref{eq:supmat2}.
 	Let $\underline w \in W_2$ be such that it maximizes
 	$\sup_{\underline w\in W_2} \frac{(B\underline w,\underline\lambda)_{\ell^2}}{\|\underline w\|_A}$.
 	Observe that $\underline w$ is only defined up to scaling. So, we introduce
 	the constraint $\underline w^\top A \underline w=1$. The first order optimality
 	system for the minimizer then reads as follows:
 	\begin{equation}\label{eq:supmat:2}
 	\begin{aligned}
 		 B^\top \underline \lambda + \xi A \underline w + C^\top \underline \mu & = 0 \\
 		 \underline w^\top A \underline w &=1 \\
 		 D \underline \nu = 0 \quad\Rightarrow\quad \underline \nu^\top C \underline w &=0\\
 		 D \underline \mu &= 0,
 	\end{aligned}
 	\end{equation}
 	where $\xi \in \mathbb R$ and $\underline \mu\in \mathbb R^{m_2}$ are the Lagrange multipliers.
 	The specific form in the third line in~\eqref{eq:supmat:2} is obtained by the fact that
 	we may only consider derivatives in the feasible directions.
 	By multiplying the first line in~\eqref{eq:supmat:2} from left with $\underline w^\top$, we
 	obtain using the second line in~\eqref{eq:supmat:2}
 	\[
 			\underline w^\top B^\top \underline \lambda + \xi  + \underline w^\top C^\top \underline \mu  = 0.
 	\]
 	Since $D\underline \mu=0$, we know from the third line in~\eqref{eq:supmat:2}
 	that $ \underline w^\top C^\top \underline \mu=0$. This shows
 	$\xi = - \underline w^\top B^\top \underline \lambda= -\underline\lambda^\top B\underline w$.
 	The third line in~\eqref{eq:supmat:2} is satisfied if and only
 	if $C \underline w + D^\top \underline \rho = 0$ for some $\underline \rho$.
 	From this line, the first line in~\eqref{eq:supmat:2} and the fourth line in~\eqref{eq:supmat:2},
 	we obtain
 	\[
 		-
 		\begin{pmatrix}
 			\xi \underline w \\ \underline \mu\\ \xi \underline \rho
 		\end{pmatrix}
 		=
 		\begin{pmatrix}
 			 A & C^\top & 0\\
 			 C & 0 & D^\top \\
 			 0 & D & 0
 		\end{pmatrix}^{-1} 		
 		\begin{pmatrix}
 			B^\top\underline \lambda \\ 0 \\0
 		\end{pmatrix}.
 	\]
 	By multiplying this from left with $\begin{pmatrix} \underline\lambda^\top B & 0 & 0\end{pmatrix}$, we obtain
 	\[
 	-\xi \underline\lambda^\top B \underline w
 	=
 	\begin{pmatrix} \underline\lambda^\top B & 0& 0\end{pmatrix}
 	\begin{pmatrix}
 			 A & C^\top & 0\\
 			 C & 0 & D^\top \\
 			 0 & D & 0
 		\end{pmatrix}^{-1} 		
 		\begin{pmatrix}
 			B^\top\underline \lambda \\ 0\\ 0
 		\end{pmatrix}=
 		\|\underline\lambda\|_{M_2}^2.
 	\]
 	Using $\|\underline w\|_A^2=1$ and $\xi = -\underline\lambda^\top B \underline w$,
 	we obtain
 	\[
 		\frac{(\underline\lambda^\top B \underline w)^2}{\|\underline w\|_A^2}
 		= \|\underline\lambda\|_{M_1}^2,
 	\]
 	which finishes the proof. 
\end{proofof}

\bibliographystyle{ws-m3as}
\bibliography{bibliography}

\begin{thebibliography}{10}
\newcommand{\enquote}[1]{#1}

\bibitem{Bramble:2003}
J.~H. Bramble, \enquote{A proof of the inf-sup condition for the {S}tokes
  equations on {L}ipschitz domains}, {\it Mathematical Models and Methods in
  Applied Sciences} \textbf{13} (2003) 361 -- 371.

\bibitem{bressan2018inf}
A.~Bressan and B.~J{\"u}ttler, \enquote{Inf--sup stability of isogeometric
  {T}aylor--{H}ood and {S}ub-{G}rid methods for the {S}tokes problem with
  hierarchical splines}, {\it IMA Journal of Numerical Analysis} \textbf{38}
  (2018) 955 -- 975.

\bibitem{bressan2013isogeometric}
A.~Bressan and G.~Sangalli, \enquote{Isogeometric discretizations of the
  {S}tokes problem: stability analysis by the macroelement technique}, {\it IMA
  Journal of Numerical Analysis} \textbf{33} (2013) 629 -- 651.

\bibitem{Brezzi:1974}
F.~Brezzi, \enquote{On the existence, uniqueness and approximation of
  saddle-point problems arising form {L}agrange multipliers}, {\it R.A.I.R.O.}
  \textbf{8} (1974) 129 -- 151.

\bibitem{buffa2011isogeometric}
A.~Buffa, C.~D. Falco and G.~Sangalli, \enquote{Isogeometric analysis: stable
  elements for the 2{D} {S}tokes equation}, {\it International Journal for
  Numerical Methods in Fluids} \textbf{65} (2011) 1407 -- 1422.

\bibitem{costabel2015inf}
M.~Costabel, M.~Crouzeix, M.~Dauge and Y.~Lafranche, \enquote{The inf-sup
  constant for the divergence on corner domains}, {\it Numerical Methods for
  Partial Differential Equations} \textbf{31} (2015) 439 -- 458.

\bibitem{Cottrell:Hughes:Bazilevs}
J.~A. Cottrell, T.~J.~R. Hughes and Y.~Bazilevs, {\it Isogeometric Analysis --
  Toward Integration of CAD and FEA} (John Wiley \& Sons, 2009).

\bibitem{BCPS:2013}
L.~B. da~Veiga, D.~Cho, L.~Pavarino and S.~Scacchi, \enquote{{BDDC}
  preconditioners for isogeometric analysis}, {\it Math. Models Methods Appl.
  Sci.} \textbf{23} (2013) 1099 -- 1142.

\bibitem{evans2013isogeometric}
J.~A. Evans and T.~J. Hughes, \enquote{Isogeometric divergence-conforming
  {B}-splines for the {D}arcy--{S}tokes--{B}rinkman equations}, {\it
  Mathematical Models and Methods in Applied Sciences} \textbf{23} (2013) 671
  -- 741.

\bibitem{farhat2001feti}
C.~Farhat, M.~Lesoinne, P.~LeTallec, K.~Pierson and D.~Rixen,
  \enquote{{FETI-DP: a dual--primal unified FETI method—part I: A faster
  alternative to the two-level FETI method}}, {\it International journal for
  numerical methods in engineering} \textbf{50} (2001) 1523 -- 1544.

\bibitem{fortin1991mixed}
M.~Fortin and F.~Brezzi, {\it Mixed and hybrid finite element methods} (New
  York: Springer-Verlag, 1991).

\bibitem{hofer2017dual}
C.~Hofer and U.~Langer, \enquote{{Dual-primal isogeometric tearing and
  interconnecting solvers for multipatch dG-IgA equations}}, {\it Computer
  Methods in Applied Mechanics and Engineering} \textbf{316} (2017) 2 -- 21.

\bibitem{hofer2019dual}
C.~Hofer and U.~Langer, \enquote{Dual-primal isogeometric tearing and
  interconnecting methods}, in {\it Contributions to Partial Differential
  Equations and Applications} (Springer, 2019), pp. 273 -- 296.

\bibitem{hughes2005isogeometric}
T.~J. Hughes, J.~A. Cottrell and Y.~Bazilevs, \enquote{{Isogeometric analysis:
  CAD, finite elements, NURBS, exact geometry and mesh refinement}}, {\it
  Computer methods in applied mechanics and engineering} \textbf{194} (2005)
  4135 -- 4195.

\bibitem{kimleepark}
H.~H. Kim, C.-O. Lee and E.-H. Park, \enquote{{A FETI-DP formulation for the
  Stokes problem without primal pressure components}}, {\it SIAM Journal on
  Numerical Analysis} \textbf{47} (2010) 4142 -- 4162.

\bibitem{kleiss2012ieti}
S.~K. Kleiss, C.~Pechstein, B.~J{\"u}ttler and S.~Tomar,
  \enquote{{IETI}--isogeometric tearing and interconnecting}, {\it Computer
  Methods in Applied Mechanics and Engineering} \textbf{247} (2012) 201 -- 215.

\bibitem{li2005dual}
J.~Li, \enquote{{A dual-primal FETI method for incompressible Stokes
  equations}}, {\it Numerische Mathematik} \textbf{102} (2005) 257 -- 275.

\bibitem{MandelDohrmannTezaur:2005a}
J.~Mandel, C.~R. Dohrmann and R.~Tezaur, \enquote{An algebraic theory for
  primal and dual substructuring methods by constraints}, {\it Appl. Numer.
  Math.} \textbf{54} (2005) 167 -- 193.

\bibitem{Necas:1967}
J.~Necas, {\it {Les m\'ethodes directes en th\'eorie des \'equations
  elliptiques}} (Masson, Paris, 1967).

\bibitem{pavarino2016isogeometric}
L.~F. Pavarino and S.~Scacchi, \enquote{{Isogeometric block FETI-DP
  preconditioners for the Stokes and mixed linear elasticity systems}}, {\it
  Computer Methods in Applied Mechanics and Engineering} \textbf{310} (2016)
  694 -- 710.

\bibitem{Pechstein:2013a}
C.~Pechstein, {\it Finite and Boundary Element Tearing and Interconnecting
  Solvers for Multiscale Problems} (Springer, Heidelberg, 2013).

\bibitem{SchneckenleitnerTakacs:2020}
R.~Schneckenleitner and S.~Takacs, \enquote{Condition number bounds for
  {IETI-DP} methods that are explicit in $h$ and $p$}, {\it Mathematical Models
  and Methods in Applied Sciences} \textbf{30} (2020) 2067 -- 2103.

\bibitem{SchneckenleitnerTakacs:2021b}
R.~Schneckenleitner and S.~Takacs, \enquote{{IETI-DP methods for discontinuous
  Galerkin multi-patch Isogeometric Analysis with T-junctions}}, {\it Computer
  Methods in Applied Mechanics and Engineering} To appear.

\bibitem{ScottZhang:1990}
L.~R. Scott and S.~Zhang, \enquote{Finite element interpolation of nonsmooth
  functions satisfying boundary conditions}, {\it Mathematics of Computation}
  \textbf{54} (1990) 483 -- 493.

\bibitem{sogn2021dual}
J.~Sogn and S.~Takacs, \enquote{Dual-primal isogeometric tearing and
  interconnecting methods for the {S}tokes problem}, {\it arXiv preprint
  arXiv:2112.12163} .

\bibitem{tuli}
X.~Tu and J.~Li, \enquote{A unified dual-primal finite element tearing and
  interconnecting approach for incompressible {S}tokes equations}, {\it
  International journal for numerical methods in engineering} \textbf{94}
  (2013) 128 -- 149.

\bibitem{tu2015feti}
X.~Tu and J.~Li, \enquote{{A FETI-DP type domain decomposition algorithm for
  three-dimensional incompressible Stokes equations}}, {\it SIAM Journal on
  Numerical Analysis} \textbf{53} (2015) 720 -- 742.

\bibitem{widlund2021block}
O.~Widlund, S.~Zampini, S.~Scacchi and L.~F. Pavarino, \enquote{{Block
  FETI--DP/BDDC preconditioners for mixed isogeometric discretizations of
  three-dimensional almost incompressible elasticity}}, {\it Mathematics of
  Computation} \textbf{90} (2021) 1773 -- 1797.

\end{thebibliography}

\end{document}